\newcommand{\eM}     {$\epsilon$\protect\nobreakdash-machine}
\newcommand{\eMs}    {$\epsilon$\protect\nobreakdash-machines}
\newcommand{\EMs}    {$\epsilon$\protect\nobreakdash-Machines}
\newcommand{\Process}{\mathcal{P}}
\newcommand{\MeasSymbol}   { {X} }
\newcommand{\meassymbol}   { {x} }
\newcommand{\biinfinity}	{ \overleftrightarrow {\meassymbol} }
\newcommand{\biinfinitystates}	{ \overleftrightarrow {s} }
\newcommand{\Past}	{ \overleftarrow {\MeasSymbol} }
\newcommand{\past}	{ {\overleftarrow {\meassymbol}} }
\newcommand{\Future}	{ \overrightarrow{\MeasSymbol} }
\newcommand{\future}	{ \overrightarrow{\meassymbol} }
\theoremstyle{plain}   
\theoremstyle{plain}   \newtheorem{Lem}{Lemma}
\theoremstyle{plain} 	\newtheorem{Cor}{Corollary}
\theoremstyle{plain} 	\newtheorem{The}{Theorem}
\theoremstyle{plain} 	
\theoremstyle{plain} 	
\theoremstyle{plain}	\newtheorem*{Rem}{Remark}
\theoremstyle{plain}	\newtheorem{Def}{Definition} 
\theoremstyle{plain}	
\theoremstyle{plain}   \newtheorem{Cla}{Claim}
\theoremstyle{plain} \newtheorem{Exa}{Example}
\theoremstyle{plain} \newtheorem*{Exm}{Example}
\def\norm#1{\|#1\|}
\def\AA{\mathcal{A}}
\def\N{\mathbb{N}}
\def\R{\mathbb{R}}
\def\RR{\mathcal{R}}
\def\TT{\mathcal{T}}
\def\Z{\mathbb{Z}}
\def\X{\mathbb{X}}
\def\FF{\mathcal{F}}
\def\H{\mathbb{H}}
\def\P{\mathbb{P}}
\def\Pt{\widetilde{\mathbb{P}}}
\def\PB{\mathbf{P}}
\def\Ex{\mathbb{E}}
\def\E{\mathcal{E}} 
\def\indicator{\mathds{1}}
\def\XX{\mathcal{X}}
\def\XXZ{\mathcal{X}^{\mathbb{Z}}}
\def\XXm{\mathcal{X}^-}
\def\XcrossS{(\mathbb{X} \mbox{ x } \mathbb{S})}
\def\XSZ{(\mathcal{X} \mbox{ x } \mathcal{S} )^{\mathbb{Z}}}
\def\Pm{\mathbb{P}^-}
\def\Xm{\mathbb{X}^-}
\def\goesonx{\stackrel{x}{\rightarrow}} 
\def\Processt{\widetilde{\mathcal{P}}}
\def\CS{S}
\def\cs{\sigma}
\def\csr{s}
\def\MS{X}
\def\ms{x}
\def\csb{\overline{\sigma}}
\def\csrb{\overline{s}}
\def\CSb{\overline{S}} 
\def\CSSet{\mathcal{S}}
\def\L{\mathcal{L}}
\begin{document}

\title{Equivalence of History and Generator \EMs}

\author{Nicholas F. Travers}
\email{ntravers@math.ucdavis.edu}
\affiliation{Complexity Sciences Center}
\affiliation{Mathematics Department}

\author{James P. Crutchfield}
\email{chaos@ucdavis.edu}
\affiliation{Complexity Sciences Center}
\affiliation{Mathematics Department}
\affiliation{Physics Department\\
University of California at Davis,\\
One Shields Avenue, Davis, CA 95616}
\affiliation{Santa Fe Institute\\
1399 Hyde Park Road, Santa Fe, NM 87501}

\date{\today}

\bibliographystyle{unsrt}


\begin{abstract}

\EMs\ are minimal, unifilar presentations of stationary stochastic processes.
They were originally defined in the history machine sense, as hidden Markov 
models whose states are the equivalence classes of infinite pasts with the same
probability distribution over futures. In analyzing synchronization, though,
an alternative generator definition was given: unifilar, edge-emitting hidden 
Markov models with probabilistically distinct states. The key difference is that
history \eMs\ are defined by a process, whereas  generator \eMs\ define a process. 
We show here that these two definitions are equivalent in the finite-state case. 

\end{abstract}

MCS Codes: 
37A50 
37B10 
60J10 

%
\preprint{Santa Fe Institute Working Paper 11-11-051}
\preprint{arxiv.org:1111.4500 [math.PR]}

\keywords{hidden Markov model, history epsilon-machine, generator epsilon-machine, measure theory, synchronization, uniqueness}

\maketitle


\section{Introduction}
\label{sec:Introduction}

Let $\Process = (\MS_t)_{t \in \Z}$ be a stationary stochastic process. The
\eM\ $M = M(\Process)$ for the process $\Process$ is the hidden Markov model
whose states consist of equivalence classes of infinite past sequences
(histories) $\past = \ldots \ms_{-2} \ms_{-1}$ with the same probability
distribution over future sequences $\future = \ms_0 \ms_1 \ldots$. The corresponding
equivalence relation on the set of pasts $\past$ is denoted by $\sim_{\epsilon}$:
\begin{align}
\past \sim_{\epsilon} \past^\prime ~ \mbox{ if } ~
\P(\Future|\past) = \P(\Future|\past^\prime) ~,
\label{eq:PredEquiv}
\end{align}
where $\Future = \MS_0 \MS_1 \ldots$ denotes the infinite sequence of future random variables.  

These machines were first introduced in \cite{Crut88a} as minimal, predictive
models to measure the structural complexity of dynamical systems and have subsequently been applied in a number of contexts for nonlinear modeling
\cite{Crut97a, Varn02a, Varn03c, Stil07b, Li08a}. Important extensions and a
more thorough development of the theory were given in
\cite{Crut92c, Uppe97a, Shal98a, Ay05a}.
However, it was not until quite recently that the first fully formal construction was presented in \cite{Lohr10a}. 

Shortly thereafter, in our studies of synchronization \cite{Trav11a,Trav11b},
we introduced an alternative ``generator \eM'' definition, in contrast to the original ``history \eM''  
construction discussed above. A generator \eM\ is defined simply as a unifilar, edge-emitting hidden Markov
model with probabilistically  distinct states. As opposed to the history \eM\ $M_h = M_h(\Process)$
which is derived from a process $\Process$, a generator \eM\ $M_g$ 
itself defines a stationary process $\Process = \Process(M_g)$. Namely, the stationary output 
process of the hidden Markov model $M_g$ obtained by choosing the initial state according to the
stationary distribution $\pi$ for states of the underlying Markov chain.   

We establish, here, that the history and generator \eM\ definitions are equivalent
in the finite state case. This has long been assumed, without
formally specifying the generator definition. However, our work makes this
explicit and gives one of the first formal proofs of equivalence. 

The equivalence is also implicit in \cite{Lohr10a}; in fact, for a more general
class of machines, not just finite-state. However, the techniques used there
differ substantially from ours and use somewhat more machinery. 
In particular, the proof of equivalence in the more difficult direction of Theorem 
\ref{thm:GeneratorsAreHistoryMachines} (Section \ref{subsec:Generator_eMachines_are_History_eMachines})
uses a supermartingale argument that, though elegant, relies implicitly on the martingale 
convergence theorem and is not particularly concrete. By contrast our proof of Theorem
\ref{thm:GeneratorsAreHistoryMachines} follows directly from the synchronization 
results given in \cite{Trav11a, Trav11b}, which are themselves fairly elementary, using 
only basic information theory and a large deviation estimate for finite-state Markov chains. 
Thus, the alternative proof presented here should be useful in providing intuition for the theorem. 
Also, since the definitions and terminology used in \cite{Lohr10a}
differ significantly from ours, it is not immediately clear that the history-generator 
equivalence is what is shown there or is a consequence of what is shown. Thus,
the exposition here should be helpful in clarifying these issues. 

We note also that in order to parallel the generator \eM\ definition used in our
synchronization studies and apply results from those works, we restrict the
range of processes somewhat when defining history \eMs. In particular, we
assume when defining history \eMs\ that the process $\Process$ is not only
stationary but also ergodic and that the process alphabet is finite. This is
required for equivalence, since the output process of a generator \eM\ is of
this form. However, neither of these assumptions is strictly necessary for
history \eMs. Only stationarity is actually needed. The history \eM\ definition
can be extended to nonergodic stationary processes and countable or even more
general alphabets \cite{Shal98a, Lohr10a}. 

\section{Related Work}
\label{sec:RelatedWork}

Since their introduction in the late 80s, most of the work on \eMs, both
theoretical and applied, has come from the physics and information theory
perspectives. However, similar concepts have been around for some time in
several other disciplines. Among others, there has been substantial work on
related topics by both probabilists and automata theorists, as well as those
in the symbolic dynamics community. Below, we review some of the most germane
developments in these areas. The interested reader is also referred to 
\cite[appendix H]{Shal98a} where a very broad overview of such connections is
given and to \cite{Boyl09a} for a recent review of the relation between symbolic
dynamics and hidden Markov models in general. 

We hope that our review provides context for the study of \eMs\ and
helps elucidate the relationship between \eMs\ and other related models---both
their similarities and their differences. However, an understanding of these
relationships will not be necessary for the equivalence results that follow.
The reader uninterested in these connections may safely skip to the definitions
in Section \ref{sec:Definitions}. 

\subsection{Sofic Shifts and Topological Presentations}
\label{sec:SoficShiftsAndTopologicalPresentations}

Let $\XX$ be a finite alphabet, and let $\XXZ$ denote the set of all
bi-infinite sequences $\biinfinity = \ldots \ms_{-1} \ms_0 \ms_1 \ldots$ consisting
of symbols in $\XX$. A subshift $\Sigma \subset \XXZ$ is said to be
\emph{sofic} if it is the image of a subshift of finite type under a $k$-block
factor map. This concept was first introduced in \cite{Weis73a}, where it also 
shown that any sofic shift $\Sigma$ may be presented as a finite, directed graph
with edges labeled by symbols in the alphabet $\XX$. The allowed
sequences $\biinfinity \in \Sigma$ consist of projections (under the edge
labeling) of bi-infinite walks on the graph edges. 

In the following, we will assume that all vertices in a presenting graph $G$ 
of a sofic shift are \emph{essential}. That is, each vertex $v$ occurs as the target vertex
of some edge $e$ in a bi-infinite walk on the graph edges. If this is not the case, one may 
restrict to the graph $G'$ consisting of essential vertices in $G$, along with their outgoing edges, 
and $G'$ will also be a presenting graph for the sofic shift $\Sigma$. Thus, it is only necessary
to consider presenting graphs in which all vertices are essential. 

The \emph{language} $\L(\Sigma)$ of a subshift $\Sigma$ is the set of all
finite words $w$ occurring in some point $\biinfinity \in \Sigma$. For a sofic
shift $\Sigma$ with presenting graph $G$ one may consider the nondeterministic
finite automaton (NFA) $M$ associated with the graph $G$, in which all states
(vertices of $G$) are both start and accept states. Clearly (under the assumption 
that all vertices are essential) the language accepted by $M$ is just $\L(\Sigma)$.
Thus, the language of any sofic shift is regular. By standard algorithms (see
e.g. \cite{Hopc79}) one may obtain from $M$ a unique, minimal, deterministic
finite automaton (DFA) $M'$ with the fewest number of states of all DFAs
accepting the language $\L(\Sigma)$. We call $M'$ the \emph{minimal
deterministic automaton} for the sofic shift $\Sigma$. 

A subshift $\Sigma$ is said to be \emph{irreducible} if for any two words 
$w_1,w_2 \in \L(\Sigma)$ there exists $w_3 \in \L(\Sigma)$ such that the word 
$w_1 w_3 w_2 \in \L(\Sigma)$. As shown in \cite{Fisc75a}, a sofic shift
is irreducible if and only if it has some irreducible (i.e. strongly connected)
presenting graph $G$. 

A presenting graph $G$ of a sofic shift $\Sigma$ is said to be \emph{unifilar}
or \emph{right-resolving} if for each vertex $v \in G$ and symbol $x \in \XX$,
there is at most one outgoing edge $e$ from $v$ labeled with the symbol $x$.
A shown in \cite{Boyl85a}, an irreducible sofic shift $\Sigma$ always has a unique, 
minimal, unifilar presenting graph, that, it turns out, is also irreducible. 
In symbolic dynamics this presentation is often referred to as the (right) \emph{Fischer cover} of $\Sigma$.  

For an irreducible sofic shift $\Sigma$, the graph associated with the minimal
deterministic automaton always has a single recurrent, irreducible component.
This recurrent component is \emph{isomorphic} to the Fischer cover. That is,
there exists a bijection between vertices in the automaton graph and vertices
of the Fischer cover that preserves both edges and edge labels.

A related notion is the Krieger cover based on future sets \cite{Lind95a}. For a
subshift $\Sigma \subset \XXZ$, let $\Sigma^+$ denote  the set of allowed future
sequences $\future = \ms_0 \ms_1 \ldots$ and let $\Sigma^-$ be the set of
allowed past sequences $\past = \ldots \ms_{-2} \ms_{-1}$. That is: 
\begin{align*}
\Sigma^+ = \{\future: \exists \past \mbox{ with } \past \future \in \Sigma\} \mbox{ and }
\Sigma^- = \{\past: \exists \future \mbox{ with } \past \future \in \Sigma\} .
\end{align*} 
Also, for a past $\past \in \Sigma^-$, let the \emph{future set} $F(\past)$ of
$\past$ be the set of all possible future sequences $\future$ that can follow
$\past$: 
\begin{align*}
F(\past) = \{\future \in \Sigma^+: \past \future \in \Sigma\} . 
\end{align*}
Define an equivalence relation $\sim_K$ on the set of infinite pasts $\past \in \Sigma^-$ by:
\begin{align}
\past \sim_K \past' \mbox{ if } F(\past) = F(\past').
\label{eq:tildeK}
\end{align}
The Krieger cover of $\Sigma$ is the (possibly infinite) directed, edge-labeled 
graph $G$ whose vertices consist of equivalence classes of pasts $\past$ under
the relation $\sim_K$. There is a directed edge in $G$ from vertex $v$ to vertex
$v'$ labeled with symbol $\ms$, if for some past $\past \in v$
(equivalently all pasts $\past \in v$) the past $\past' = \past \ms \in v'$.
By construction, the Krieger cover $G$ is necessarily unifilar. Moreover, it is
easily shown that $G$ is a finite graph if and only if the subshift $\Sigma$ is sofic. 

If the subshift $\Sigma$ is both irreducible and sofic, then the Krieger cover
is isomorphic to the subgraph of the minimal deterministic automaton
consisting of all states $v$ that are not $\emph{finite-time transient}$ (and
their outgoing edges). That is, the subgraph consisting of those states $v$ such
that there exist arbitrarily long words $w \in \L(\Sigma)$ on which the
automaton transitions from its start state to $v$. Clearly, any state in the
recurrent, irreducible component of  the automaton graph is not finite-time
transient. Thus, the Krieger cover contains this recurrent component---the Fischer cover.

To summarize, the minimal deterministic automaton, Fischer cover, and Krieger
cover are three closely related ways for presenting an irreducible sofic shift
that are each, in slightly different senses, minimal unifilar presentations.
The Fischer cover is always an irreducible graph. The Krieger cover and graph
of the minimal deterministic automaton are not necessarily irreducible, but
they each have a single recurrent, irreducible component that is isomorphic to
the Fischer cover. The Krieger cover itself is also isomorphic to a subgraph
of the minimal deterministic automaton. 

\EMs\ are a probabilistic extension of these purely topological presentations.
More specifically, for a stationary process $\Process$ the history \eM\ $M_h$
is the probabilistic analog of the Krieger cover $G$ for the subshift consisting
of $\mbox{supp}(\Process)$. It is the edge-emitting hidden Markov model defined
analogously to the Krieger cover, but with states that are equivalence classes
of infinite past sequences $\past$ with the same probability distribution
over future sequences $\future$, rather than simply the same set of allowed
future sequences. (Compare Equations (\ref{eq:PredEquiv}) and (\ref{eq:tildeK}).)

In some cases the two presentations may be topologically equivalent---e.g., the
history \eM\ and Krieger cover can be isomorphic as graphs when
the transition probabilities are removed from edges of the \eM.
In other cases, however, they are not. For example, for the Even Process
(Example \ref{ex:Even}, Section \ref{subsec:Examples}) the Krieger cover (or at least
its recurrent component, the Fischer cover) and the history \eM\ are topologically 
equivalent. But this is not so for the ABC process (Example \ref{ex:ABC},
Section \ref{subsec:Examples}). In fact, there exist many examples of ergodic
processes whose support is an irreducible sofic shift, but for which the
history \eM\ has an infinite (or even continuum) number of states. See, e.g.,
Example \ref{ex:SNS} in Section \ref{subsec:Examples}, and Example 3.26 in \cite{Lohr10a}.

\subsection{Semigroup Measures}
\label{sec:SemigroupMeasures}

Semigroup measures are a class of probability measures on sofic shifts that
arise from assigning probability transition structures to the right and left
covers obtained from the Cayley graphs associated with generating semigroups
for the shifts. These measures are studied extensively in \cite{Kitc84a}, where 
a rich theory is developed and many of their key structural properties are characterized.

In particular, it is shown there that a stationary probability measure $\P$ on 
a sofic shift $\Sigma$ is a semigroup measure if and only if it has a finite number of
\emph{future measures}---distributions over future sequences
$\future$---induced by all finite-length past words $w$.
That is, if there exist a finite number of finite length words
$w_1, \ldots , w_N$ such that for any word $w$ of positive probability:
\begin{align*}
\P(\Future | w) = \P(\Future | w_i) ~,
\end{align*}
for some $1 \leq i \leq N$, where $\Future$ denotes the
infinite sequence of future random variables $\MS_t$ on $\Sigma$, 
defined by the natural projections $\MS_t(\biinfinity) = \ms_t$. 

By contrast, a process $\Process$ (or measure $\P$) has a finite-state history
\eM\ if there exist a finite number of infinite past sequences
$\past_1, \ldots, \past_N$ such that, for almost every infinite past $\past$:
\begin{align*}
\P(\Future|\past) = \P(\Future| \past_i) ~,
\end{align*}
for some $1 \leq i \leq N$. The latter condition is strictly more general. The
Alternating Biased Coins Process described in Section \ref{subsec:Examples}, for
instance, has a finite-state ($2$-state) \eM, but does not correspond to
a semigroup measure.

Thus, unfortunately, though the theory of semigroup measures is quite rich and well
developed, much of it does not apply for the measures we study. For this reason, 
our proof methods are quite different from those previously used for semigroup 
measures, despite the seeming similarity between the two settings

\subsection{g-Functions and g-Measures}
\label{sec:gfunctions_and_gmeasures}

For a finite alphabet $\XX$, let $\XX^-$ denote the set of all infinite
past sequences $\past = \ldots \ms_{-2} \ms_{-1}$ consisting of symbols in $\XX$. 
A \emph{g-function} for the full shift $\XX^{\Z}$ is a map:
\begin{align*}
g: (\XX^- \times \XX) \rightarrow [0,1] ~,
\end{align*}
such that for any $\past \in \XX^-$: 
\begin{align*}
\sum_{x \in \XX} g(\past, x) = 1 ~.
\end{align*}
A \emph{g-measure} for a g-function $g$ on the full shift $\XXZ$ is stationary
probability measure $\P$ on $\XX^{\Z}$ that is consistent with the g-function
$g$ in that for $\P$ a.e. $\past \in \XX^-$: 
\begin{align*}
\P(\MS_0 = x|\Past = \past) = g(\past, x), ~ \mbox{for each $x \in \XX$}. 
\end{align*}
g-Functions and g-Measures have been studied for some time, though sometimes
under different names \cite{Harr55a,Kean72a,Bram93a,Sten03a}. In particular,
many of these studies address when a g-function will or will not have a unique
corresponding g-measure. Normally, $g$ is assumed to be continuous (with
respect to the natural product topology) and in this case, using fixed point theory, it can
be shown that at least one g-measure exists. However, continuity is not enough
to ensure uniqueness, even if some natural mixing conditions are required as
well \cite{Bram93a}. Thus, stronger conditions are often required, such as
H\"{o}lder continuity.

Of particular relevance to us is the more recent work \cite{Krie10a} on 
g-functions restricted to subshifts. It is shown there, in many instances, 
how to construct g-functions on subshifts with an infinite or even continuum
number of future measures, subject to fairly strong requirements. For example, 
residual local constancy or a synchronization condition
similar to the exactness condition introduced in \cite{Trav11a}. Most surprising, 
perhaps, are the constructions of g-functions for irreducible subshifts,
which themselves take only a finite number of values, but have unique associated 
g-measures with an infinite number of future measures. 
 
The relation to \eMs\ is the following. Given a g-function $g$,
one may divide the set of infinite past sequences $\past$ into 
equivalence classes, in a manner analogous to that for history \eMs,
by the relation $\sim_g$:
\begin{align}
\past \sim_g \past' \mbox{ if } g(\past, \ms) = g(\past', \ms),
	\mbox{ for all } \ms \in \XX ~.
\label{eq:tildeg}
\end{align}
The equivalence classes induced by the relation $\sim_g$ of Equation \ref{eq:tildeg}  
are coarser than those induced by the relation $\sim_{\epsilon}$ of Equation \ref{eq:PredEquiv}. 
For any g-measure $\P$ of the g-function $g$, the states of the history \eM\
are a refinement or splitting of the $\sim_g$ equivalence classes. Two infinite pasts
$\past$ and $\past'$ that induce different probability distributions over the
next symbol $\ms_0$ must induce different probability distributions over
infinite future sequences $\future$, but the converse is not necessarily 
true. As shown in \cite{Krie10a}, the splitting may, in fact,
be quite ``bad'' even if ``nice'' conditions are enforced on the g-function
associated with the probability measure $\P$. Concretely, there exist processes
with history \eMs\ that have an infinite or even continuum number of states,
but for which the associated ``nice'' g-function from which the process is derived
has only a finite number of equivalence classes.  

\section{Definitions}
\label{sec:Definitions}

In this section we set up the formal framework for our results and give more complete definitions
for our objects of study: stationary processes, hidden Markov models, and \eMs. 

\subsection{Processes}
\label{subsec:Processes}

There are several ways to define a stochastic process. Perhaps the most
traditional is simply as a sequence of random variables $(\MS_t)$ on some
common probability space $\Omega$. However, in the following it will be
convenient to use a slightly different, but equivalent, construction in which
a process is itself a probability space whose sample space consists of
bi-infinite sequences $\biinfinity = \ldots \ms_{-1} \ms_0 \ms_1 \ldots$. Of
course, on this space we have random variables $\MS_t$ defined by the natural
projections $\MS_t(\biinfinity) = \ms_t$, which we will employ at times in our
proofs. However, for most of our development and, in particular, for defining
history \eMs, it will be more convenient to adopt the sequence-space viewpoint.  

Throughout, we restrict our attention to processes over a finite alphabet
$\XX$. We denote by $\XX^*$ the set of all words $w$ of finite positive 
length consisting of symbols in $\XX$ and, for a word $w \in \XX^*$, we write
$|w|$ for its length. Note that we deviate slightly from the standard convention
here and explicitly exclude the null word $\lambda$ from $\XX^*$.

\begin{Def}
\label{def:Process}
Let $\XX$ be a finite set. A process $\Process$ over the alphabet $\XX$ is
a probability space $(\XXZ, \X, \P)$ where:
\begin{itemize}
\item $\XXZ$ is the set of all bi-infinite sequences of symbols in $\XX$: 
	$\XXZ = \{\biinfinity = \ldots \ms_{-1} \ms_0 \ms_1 \ldots : \ms_t \in \XX,
	\mathrm{~for~all~} t \in \Z\}$.
\item $\X$ is the $\sigma$-algebra generated by finite cylinder sets of the
	form $A_{w,t} = \{\biinfinity \in \XXZ : \ms_{t} \ldots \ms_{t+|w|-1} = w\}$.
\item $\P$ is a probability measure on the measurable space $(\XXZ, \X)$. 
\end{itemize}
\end{Def}

For each symbol $\ms \in \XX$, we assume implicitly that $\P(A_{\ms,t}) > 0$ for
some $t \in \N$. Otherwise, the symbol $\ms$ is useless and the process can be
restricted to the alphabet $\XX/\{\ms\}$. In the following, we will be
primarily interested in stationary, ergodic processes.

Let $r: \XXZ \rightarrow \XXZ$ be the right shift operator. A process
$\Process$ is \emph{stationary} if the measure $\P$ is shift invariant:
$\P(A) = \P(r(A))$ for any measurable set $A$. A process $\Process$ is
\emph{ergodic} if every shift invariant event $A$ is trivial. That is, for any
measurable event $A$ such that $A$ and $r(A)$ are $\P$ a.s. equal, the
probability of $A$  is either $0$ or $1$. A stationary process $\Process$ is
defined entirely by the word probabilities $\P(w)$, $w \in \XX^*$, where
$\P(w) = \P(A_{w,t})$ is the shift invariant probability of cylinder sets for
the word $w$. Ergodicity is equivalent to the almost sure convergence of
empirical word probabilities $\widehat{\P}(w)$ in finite sequences
$\future^t = \ms_0 \ms_1 \ldots \ms_{t-1}$ to their true values $\P(w)$,
as $t \rightarrow \infty$. 

For a stationary process $\Process$ and words $w,v \in \XX^*$ with $\P(v) > 0$,
we define $\P(w|v)$ as the probability that the word $w$ is followed by the
word $v$ in a bi-infinite sequence $\biinfinity$:
\begin{align}
\label{eq:conditional_word_prob}
\P(w|v) 	& \equiv \P(A_{w,0}|A_{v,-|v|}) \nonumber \\
	 	& = \P(A_{v,-|v|} \cap A_{w,0})/\P(A_{v,-|v|}) \nonumber \\
		& = \P(vw)/\P(v) ~.
\end{align}
The following facts concerning word probabilities and conditional word
probabilities for a stationary process come immediately from the definitions.
They will be used repeatedly throughout our development, without further
mention. For any words $u,v,w \in \XX^*$: 
\begin{enumerate}
\item $\sum_{\ms \in \XX} \P(w \ms) = \sum_{\ms \in \XX} \P(\ms w) = \P(w)$;
\item $\P(w) \geq \P(wv)$ and $\P(w) \geq \P(vw)$;
\item If $\P(w) > 0$, $\sum_{\ms \in \XX} \P(\ms|w) = 1$;
\item If $\P(u) > 0$, $\P(v|u) \geq \P(vw|u)$; and
\item If $\P(u) > 0$ and $\P(uv) > 0$, $\P(vw|u) = \P(v|u) \cdot \P(w|uv)$.
\end{enumerate}

\subsection{Hidden Markov Models}
\label{subsec:HiddenMarkovModels}

There are two primary types of hidden Markov models: \emph{state-emitting} (or
\emph{Moore}) and \emph{edge-emitting} (or \emph{Mealy}). The state-emitting
type is the simpler of the two and, also, the more commonly studied and applied
\cite{Rabi89a,Ephr02a}. However, we focus on edge-emitting hidden Markov models
here, since \eMs\ are edge-emitting. We also restrict to the case where the hidden
Markov model has a finite number of states and output symbols, although
generalizations to countably infinite and even uncountable state sets and
output alphabets are certainly possible. 

\begin{Def}
\label{def:HMM}
An edge-emitting \emph{hidden Markov model} \emph{(HMM)} is a triple 
$(\CSSet, \XX, \{T^{(\ms)}\})$ where:
\begin{itemize}
\item $\CSSet $ is a finite set of states,
\item $\XX$ is a finite alphabet of output symbols, and
\item $T^{(\ms)}, \ms \in \XX$ are symbol-labeled transition matrices.
	$T_{\cs \cs'}^{(\ms)} \geq 0$ represents the probability of transitioning from
	state $\cs$ to state $\cs'$ on symbol $\ms$.
\end{itemize}
\end{Def}
In what follows, we normally take the state set to be
$\CSSet = \{\cs_1, \ldots , \cs_N\}$ and denote $T_{\cs_i \cs_j}^{(\ms)}$ simply
as $T_{ij}^{(\ms)}$. We also denote the overall state-to-state transition matrix
for an HMM as $T$: $T = \sum_{\ms \in \XX} T^{(\ms)}$. $T_{ij}$ is the overall
probability of transitioning from state $\cs_i$ to state $\cs_j$, regardless of
symbol. The matrix $T$ is stochastic: $\sum_{j=1}^N T_{ij} = 1$, for each $i$. 

Pictorially, an HMM can be represented as a directed graph with labeled
edges. The vertices are the states $\cs_1, \ldots, \cs_N$ and, for each
$i,j,\ms$ with $T^{(\ms)}_{ij} > 0$, there is a directed edge from state $\cs_i$
to state $\cs_j$ labeled $p|\ms$ for the symbol $\ms$ and transition probability
$p = T^{(\ms)}_{ij}$. The transition probabilities are normalized so that their
sum on all outgoing edges from each state $\cs_i$ is $1$. 

\begin{Exm}
Even Machine
\label{ex:EvenGenerator}
\end{Exm}

Figure \ref{fig:EvenProcess} depicts an HMM for the Even Process. The support
for this process consists of all binary sequences in which blocks of uninterrupted 
$1$s are even in length, bounded by $0$s. After each even length is reached, there 
is a probability $p$ of breaking the block of $1$s by inserting a $0$. The HMM has two
states $\{\cs_1,\cs_2\}$ and symbol-labeled transitions matrices:
\begin{align*}
T^{(\textcolor{blue}{0})} =
  \left(
  \begin{array}{cc}
  	p & 0 \\
	0 & 0 \\
  \end{array}
  \right)
\mathrm{~~and~~}
T^{(\textcolor{blue}{1})} =
  \left(
  \begin{array}{cc}
	0 & 1-p \\
	1 & 0 \\
  \end{array}
  \right) 
\end{align*}

\begin{figure}[ht]
\begin{minipage}[b]{0.4\textwidth}
\centering
\includegraphics[scale=0.9]{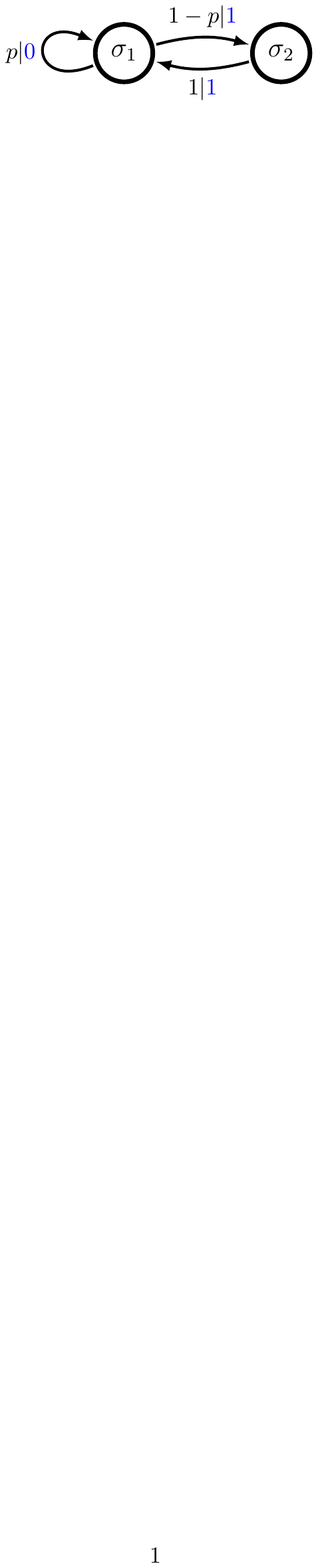}
\end{minipage}
\caption{A hidden Markov model (the \eM) for the Even Process. The HMM has two
  internal states $\CSSet = \{ \cs_1, \cs_2 \}$, a two-symbol alphabet
  $\XX = \{0,1\}$, and a single parameter $p \in (0,1)$ that controls the
  transition probabilities. }
\label{fig:EvenProcess}
\end{figure}

The operation of an HMM may be thought of as a weighted random walk on the 
associated directed graph. That is, from the current state $\cs_i$, the next
state $\cs_j$ is determined by selecting an outgoing edge from $\cs_i$
according to their relative probabilities. Having selected a transition, the
HMM then moves to the new state and outputs the symbol $x$ labeling this 
edge. The same procedure is then invoked repeatedly to generate future
states and output symbols.  

The state sequence determined in such a fashion is simply a Markov chain with
transition matrix $T$. However, we are interested not simply in the HMM's state
sequence, but rather the associated sequence of output symbols it generates. 
We assume that an observer of the HMM has direct access to this sequence 
of output symbols, but not to the associated sequence of ``hidden'' states. 

Formally, from an initial state $\cs_i$ the probability that the HMM next 
outputs symbol $x$ and transitions to state $\cs_j$ is:
\begin{align}
\PB_{\cs_i}(\ms,\cs_j) = T^{(\ms)}_{ij} ~.
\end{align}
And, the probability of longer sequences is computed inductively. 
Thus, for an initial state $\cs_i = \cs_{i_0}$ the probability the HMM outputs
a length-$l$ word $w = w_0 \ldots w_{l-1}$ while following the \emph{state path} 
$\csr = \cs_{i_1} \ldots \cs_{i_l}$ in the next $l$ steps is:   
\begin{align}
\PB_{\cs_i}(w,s)= \prod_{t=0}^{l-1} T^{(w_t)}_{i_t,i_{t+1}}  ~.
\end{align}
If the initial state is chosen according to some distribution
$\rho = (\rho_1, \ldots, \rho_N)$ 
rather than as a fixed state $\sigma_i$, we have by linearity:
\begin{align}
\PB_{\rho}(\ms, \cs_j) & = \sum_i \rho_i \cdot \PB_{\cs_i}(\ms, \cs_j)
	\mbox{ and } \\
\PB_{\rho}(w,s) & =  \sum_i \rho_i \cdot  \PB_{\cs_i}(w,s) ~.
\label{eq:symbol-state_cylinder_probs}
\end{align}
The overall probabilities of next generating a symbol $\ms$ or word $w = w_0
\ldots w_{l-1}$ from a given state 
$\cs_i$ are computed by summing over all possible associated target states or state sequences:
\begin{align}
\PB_{\cs_i}(\ms) & = \sum_j \PB_{\cs_i}(\ms,\cs_j) 
   = \norm{e_i T^{(\ms)}}_1
	\mbox{ and }\\
\PB_{\cs_i}(w) & = \sum_{\{s:|s|=l\}} \PB_{\cs_i}(w,s) 
    = \norm{e_i T^{(w)}}_1 ~, 
\label{eq:StateToWordProb}
\end{align}
respectively, where $e_i = (0,\ldots,1,\ldots,0)$ is the $i^\text{th}$ standard
basis vector in $\R^N$ and
\begin{align}
T^{(w)} & = T^{(w_0 \ldots w_{l-1})} 
         \equiv \prod_{t=0}^{l-1} T^{(w_t)} ~.
\end{align}
Finally, the overall probabilities of next generating a symbol $\ms$ or word
$w = w_0 \ldots w_{l-1}$ from an initial state distribution $\rho$ are, respectively:
\begin{align}
\PB_{\rho}(\ms) & = \sum_i \rho_i \cdot \PB_{\cs_i}(\ms) 
= \norm{\rho T^{(\ms)}}_1  \mbox{ and } \\
\PB_{\rho}(w) & = \sum_i \rho_i \cdot \PB_{\cs_i}(w)
= \norm{\rho T^{(w)}}_1 ~.
\end{align}

If the graph $G$ associated with a given HMM is strongly connected, then the 
corresponding Markov chain over states is irreducible and the state-to-state 
transition matrix $T$ has a unique \emph{stationary distribution} $\pi$ satisfying
$\pi = \pi T$ \cite{Levi06}. In this case, we may define a stationary process 
$\Process = (\XXZ, \X, \P)$ by the word probabilities obtained from choosing
the initial state according to $\pi$. That is, for any word $w \in \XX^*$:
\begin{align}
\P(w) & \equiv \PB_{\pi}(w) = \norm{\pi T^{(w)}}_1 ~.
\label{eq:generator_word_probs}
\end{align}
Strong connectivity also implies the process $\Process$ is ergodic, as it is
a pointwise function of the irreducible Markov chain over edges, which
is itself ergodic \cite{Levi06}. That is, at each time step the symbol
labeling the edge is a deterministic function of the edge.

We denote the corresponding (stationary, ergodic) process over bi-infinite
symbol-state sequences $(\biinfinity, \biinfinitystates)$ by $\Processt$.
That is, $\Processt = (\XSZ, \XcrossS, \Pt)$ where: 
\begin{enumerate}
\item $\XSZ = \left\{ (\biinfinity,\biinfinitystates) \cong (\ms_t,s_t)_{t \in \Z} : 
	\ms_t \in \XX \mbox{ and } \csr_t \in \CSSet, \mbox{ for all } t \in \Z \right\}$. 
\item $\XcrossS$ is the $\sigma$-algebra generated by finite cylinder sets on
	the bi-infinite symbol-state sequences. 
\item The (stationary) probability measure $\Pt$ on $\XcrossS$ is defined by
	Equation (\ref{eq:symbol-state_cylinder_probs}) with $\rho = \pi$. Specifically,
	for any length-$l$ word $w$ and length-$l$ state sequence $s$ we have: 
	\begin{align*}
	\Pt( \{ (\biinfinity,\biinfinitystates):\ms_0 \ldots \ms_{l-1} = w,
		\csr_1 \ldots \csr_l = s \}) = \PB_{\pi} (w, s).
	\end{align*}
	By stationarity, this measure may be extended uniquely to all finite
	cylinders and, hence, to all $\XcrossS$-measurable sets. And, it is
	consistent with the measure $\P$ in that:
	\begin{align*}
	\Pt( \{ (\biinfinity,\biinfinitystates):\ms_0 \ldots \ms_{l-1} = w\})
		= \P(w) ~,
	\end{align*}
	for all $w \in \XX^*$.
\end{enumerate}

Two HMMs are said to be \emph{isomorphic} if there is a bijection between
their state sets that preserves edges, including the symbols and probabilities
labeling the edges. Clearly, any two isomorphic, irreducible HMMs generate the
same process, but the converse is not true. Nonisomorphic HMMs may also
generate equivalent processes. In Section \ref{sec:Equivalence} we will be
concerned with isomorphism between generator and history \eMs. 

\subsection{Generator \EMs}
\label{subsec:Generator_eMachines}

Generator \eMs\ are irreducible HMMs with two additional important properties: 
unifilarity and probabilistically distinct states.

\begin{Def}
\label{def:GeM}
A \emph{generator \eM} $M_g$ is an HMM with the following properties:
\begin{enumerate}
\item \emph{Irreducibility}: The graph $G$ associated with the HMM is strongly connected.
\item \emph{Unifilarity}: For each state $\cs_i \in \CSSet$
	and each symbol $\ms \in \XX$ there is at most one
	outgoing edge from state $\cs_i$ labeled with symbol $\ms$.
\item \emph{Probabilistically distinct states}: For each pair of distinct states
	$\cs_i, \cs_j \in \CSSet$ there exists some word $w \in \XX^*$ 
	such that $\PB_{\cs_i}(w) \not= \PB_{\cs_j}(w)$. 
\end{enumerate} 
\end{Def}

Note that all three of these properties may be easily checked for a given HMM. 
Irreducibility and unifilarity are immediate. The probabilistically distinct
states condition can (if necessary) be checked by inductively separating 
distinct pairs with an algorithm similar to the one used to check for 
topologically distinct states in \cite{Trav11a}.

By irreducibility, there is always a unique stationary distribution $\pi$ over the
states of a generator \eM, so we may associate to each generator \eM\ $M_g$ a 
unique stationary, ergodic process $\Process = \Process(M_g)$ with word 
probabilities defined as in Equation (\ref{eq:generator_word_probs}). We refer to
$\Process$ as \emph{the process} generated by the generator \eM\ $M_g$. 
The transition function for a generator \eM\ or, more generally, any unifilar
HMM is denoted by $\delta$. That is, for $i$ and $\ms$ with
$\PB_{\cs_i}(\ms) > 0$, $\delta(\cs_i,\ms)  \equiv \cs_j$, where $\cs_j$ is
the (unique) state to which state $\cs_i$ transitions on symbol $\ms$. 

In a unifilar HMM, for any given initial state $\sigma_i$ and word
$w = w_0 \ldots w_{l-1} \in \XX^*$, there can be at most one associated state
path $s = s_1 \ldots s_l$ such that the word $w$ may be generated following the
state path $s$ from $\sigma_i$.  Moreover, the probability $\PB_{\cs_i}(w)$ of
generating $w$ from $\cs_i$ is nonzero if and only if there is such a path $s$.
In this case, the states $s_1,\ldots,s_l$ are defined inductively by the
relations $s_{t+1} = \delta(s_t,w_t), 0 \leq t \leq l-1$ with $s_0 = \cs_i$,
and the probability $\PB_{\cs_i}(w)$ is simply:
\begin{align}
\label{eq:WordProbsUnifilarHMM}
\PB_{\cs_i}(w) = \prod_{t=0}^{l-1} \PB_{s_t}(w_t).
\end{align}
Slightly more generally, Equation (\ref{eq:WordProbsUnifilarHMM}) holds as long
as there is a well defined path $s_1 \ldots s_{l-1}$ upon which the subword 
$w_0 \ldots w_{l-2}$ may be generated starting in $\cs_i$. Though, in this case
$\PB_{\cs_i}(w)$ may be $0$ if state $s_{l-1}$ has no outgoing transition on
symbol $w_{l-1}$. This formula for word probabilities in unifilar HMMs
will be useful in establishing the equivalence of generator and history
\eMs\ in Section \ref{sec:Equivalence}. 

\subsection{History \EMs}
\label{subsec:History_eMachines}

The history \eM\ $M_h$ for a stationary process $\Process$ is, essentially, just
the hidden Markov model whose states are the equivalence classes of
infinite past sequences defined by the equivalence relation $\sim_{\epsilon}$ 
of Equation (\ref{eq:PredEquiv}). Two pasts $\past$ and $\past'$ are considered equivalent 
if they induce the same probability distribution over future sequences. 
However, it takes some effort to make this notion precise and specify the transitions.
The formal definition itself is quite lengthy, so for clarity verification of many technicalities
is deferred to the appendices. We recommend first reading through this section in its entirety 
without reference to the appendices for an overview and, then, reading through the
appendices separately afterward for the details. The appendices are entirely
self contained in that, except for the notation introduced here, none of the
results derived in the appendices relies on the development in this section.
As noted before, our focus is restricted to ergodic, finite-alphabet processes
to parallel the generator definition. Although, neither of these requirements
is strictly necessary. Only stationarity is actually needed.

Let $\Process = (\XXZ,\X,\P)$ be a stationary, ergodic process over a finite
alphabet $\XX$, and let $(\XXm,\Xm,\Pm)$ be the corresponding probability
space over past sequences $\past$. That is:
\begin{itemize}
\item $\XXm$ is the set of infinite past sequences of symbols in $\XX$: 
	$\XXm = \{\past = \ldots \ms_{-2}\ms_{-1} : \ms_t \in \XX, t = -1, -2, \ldots \}$.
\item $\Xm$ is the $\sigma$-algebra generated by finite cylinder sets on past sequences:
	$\Xm = \sigma \left(\bigcup_{t=1}^{\infty} \Xm_t \right)$, 
	where $\Xm_t = \sigma \left(\{A_w^-: |w| = t \} \right)$
	and $A_w^- = \{\past = \ldots \ms_{-2}\ms_{-1} : \ms_{-|w|} \ldots \ms_{-1} = w\}$. 
\item $\Pm$ is the probability measure on the measurable space $(\XXm, \Xm)$
	 which is the projection of $\P$ to past sequences:
	$\Pm(A_w^-) = \P(w)$ for each $w \in \XX^*$.  
\end{itemize}

For a given past $\past \in \XXm$, we denote the last $t$ symbols of $\past$
as $\past^t =\ms_{-t} \ldots \ms_{-1}$. A past $\past \in \XXm$ is said to be
\emph{trivial} if $\P(\past^t) = 0$ for some finite $t$ and \emph{nontrivial}
otherwise. If a past $\past$ is nontrivial, then for each $w \in \XX^*$
$\P(w|\past^t)$ is well defined for each $t$, Equation (\ref{eq:conditional_word_prob}), 
and one may consider $\lim_{t \to \infty} \P(w|\past^t)$. A nontrivial past $\past$ is said 
to be \emph{w-regular} if $\lim_{t \to \infty} \P(w|\past^t)$ exists and
\emph{regular} if it is $w$-regular for each $w \in \XX^*$. Appendix
\ref{appsec:Regular_Pasts_and_Trivial_Pasts} shows that the set of trivial
pasts $\TT$ is a null set and that the set of regular pasts $\RR$ has full
measure. That is, $\Pm(\TT) = 0$ and $\Pm(\RR) = 1$. 

For a word $w \in \XX^*$ the function $\PB(w|\cdot) : \RR \rightarrow \R$ is defined by:
\begin{align}
\PB(w|\past) \equiv \lim_{t \to \infty} \P(w|\past^t) ~. 
\end{align}
Intuitively, $\PB(w|\past)$ is the conditional probability of $w$ given $\past$.
However, this probability is technically not well defined in the sense of Equation
(\ref{eq:conditional_word_prob}), since the probability of each past $\past$
is normally $0$. And, we do not want to define $\PB(w|\past)$ in terms of a formal
conditional expectation, because such a definition is only unique up to a.e.
equivalence, while we would like its value on individual pasts to be uniquely
determined. Nevertheless, intuitively speaking, $\PB(w|\past)$ is the
conditional probability of $w$ given $\past$, and this intuition should be kept
in mind as it will provide understanding for what follows. Indeed, 
if one does consider the conditional probability $\P(w|\Past)$ as a
formal conditional expectation, any version of it will be equal to
$\PB(w|\past)$ for a.e. $\past$. So, this intuition is justified.

The central idea in the construction of the history \eM\ is the
following equivalence relation on the set of regular pasts:
\begin{align}
\past \sim \past'  \mbox{ if } \PB(w|\past) = \PB(w|\past') ~,
  ~ \mathrm{~for~all~} w \in \XX^* ~.
\end{align}
That is, two pasts $\past$ and $\past'$ are $\sim$ equivalent if their predictions are the same: 
Conditioning on either past leads to the same probability distribution over future words of all lengths. 
This is simply a more precise definition of the equivalence relation $\sim_{\epsilon}$ of 
Equation (\ref{eq:PredEquiv}). (We drop the subscript $\epsilon$, as this is the only 
equivalence relation we will consider from here on.)

The set of equivalence classes of regular pasts under the relation $\sim$ is denoted as
$\E = \{E_{\beta}, \beta \in B\}$, where $B$ is simply an index set. 
In general, there may be finitely many, countably many, or uncountably
many such equivalence classes. Examples with $\E$ finite and countably infinite
are given in Section \ref{subsec:Examples}. For uncountable $\E$, see example
3.26 in \cite{Lohr10a}.

For an equivalence class $E_{\beta} \in \E$ and word $w \in \XX^*$ we define
the probability of $w$ given $E_{\beta}$ as:
\begin{align}
\PB(w|E_{\beta}) \equiv \PB(w|\past)~, \past \in E_{\beta}.
\end{align}
By construction of the equivalence classes this definition is independent of
the representative $\past \in E_{\beta}$, and Appendix
\ref{appsec:Well_Definedness_of_Equivalence_Class_Transitions} shows that these
probabilities are normalized, so that for each equivalence class $E_{\beta}$:
\begin{align}
\sum_{\ms \in \XX} \PB(\ms|E_{\beta}) = 1 ~.
\label{eq:Normalization}
\end{align}
Appendix \ref{appsec:Well_Definedness_of_Equivalence_Class_Transitions} also
shows that the equivalence-class-to-equivalence-class transitions for the
relation $\sim$ are well defined in that:
\begin{enumerate}
\item For any regular past $\past$ and symbol $\ms \in \XX$ with
	$\PB(\ms|\past) > 0$, the past $\past \ms$ is also a regular.
\item If $\past$ and $\past'$ are two regular pasts in the same equivalence
	class $E_{\beta}$ and $\PB(\ms|E_{\beta}) > 0$, then the two pasts
	$\past \ms$ and $\past' \ms$ must also be in the same equivalence class.
\end{enumerate}
So, for each $E_{\beta} \in \E$ and $\ms \in \XX$ with $\PB(\ms|E_{\beta}) > 0$ there 
is a unique equivalence class $E_{\alpha} = \delta_h(E_{\beta},\ms)$ to which 
equivalence class $E_{\beta}$ transitions on symbol $\ms$. 
\begin{align}
\label{eq:DeltaFunctionHistoryMachineDef} 
\delta_h(E_{\beta},\ms) \equiv E_{\alpha}, \mbox{ where } \past \ms \in E_{\alpha} \mbox{ for } \past \in E_{\beta}. 
\end{align}
By point 2 above, this definition is again independent of the representative 
$\past \in E_\beta$. 

The subscript $h$ in $\delta_h$ indicates that it is a transition
function between equivalence classes of pasts, or histories, $\past$. Formally,
it is to be distinguished from the transition function $\delta$ between the
states of a unifilar HMM. However, the two are essentially equivalent for a
history \eM.

Appendix \ref{appsec:Measurability_of_Equivalence_Classes} shows that each
equivalence class $E_{\beta}$ is an $\Xm$ measurable set, so we can
meaningfully assign a probability:
\begin{align}
\P(E_{\beta}) 	& \equiv \Pm(\{\past \in E_{\beta}\}) \nonumber \\
			& = \P(\{\biinfinity = \past \future : \past \in E_{\beta}\})
\end{align}
to each equivalence class $E_{\beta}$. We say a process $\Process$ is
\emph{finitely characterized} if there are a finite number of positive probability
equivalence classes $E_1, \ldots , E_N$ that together comprise
a set of full measure: $\P(E_i) > 0$ for each $1\leq i \leq N$ and
$\sum_{i=1}^N \P(E_i) = 1$. For a finitely characterized process $\Process$ we
will also occasionally say, by a slight abuse of terminology, that
$\E^+ \equiv \{E_1, \ldots , E_N\}$ \emph{is} the set of equivalence classes
of pasts and ignore the remaining measure-zero subset of equivalence classes. 

Appendix \ref{appsec:Finitely_Characterized_Processes} shows that for any
finitely characterized process $\Process$, the transitions from the positive
probability equivalence classes $E_i \in \E^+$ all go to other positive
probability equivalence classes. That is, if $E_i \in \E^+$ then:
\begin{equation}
\delta_h(E_i, \ms) \in \E^+, \mbox{ for all } \ms \mbox{ with } \PB(\ms|E_i) > 0.
\label{eq:ConsistentTrans}
\end{equation}
As such, we define symbol-labeled transition matrices $T^{(\ms)}, \ms \in \XX$
between the equivalence classes $E_i \in \E^+$. A component $T_{ij}^{(\ms)}$ of
the matrix $T^{(\ms)}$ gives the probability that equivalence class $E_i$
transitions to equivalence class $E_j$ on symbol $\ms$:
\begin{align}
T^{(\ms)}_{ij} & = \PB(E_i  \goesonx E_j)
               \equiv  I(\ms,i,j) \cdot \PB(\ms|E_i) ~,
\end{align}
where $I(\ms,i,j)$ is the indicator function of the transition from $E_i$ to $E_j$ on symbol $\ms$:
\begin{align}
I(\ms,i,j) 
  & = \left\{ \begin{array}{ll}
  	1 & \mbox{if } \PB(\ms|E_i) > 0 \mbox{ and } \delta_h(E_i,\ms) = E_j, \\
  	0 & \mbox{otherwise.}
  \end{array} \right.
\end{align}
It follows from Equations (\ref{eq:Normalization}) and (\ref{eq:ConsistentTrans})
that the matrix $T \equiv \sum_{\ms \in \XX} T^{(\ms)}$ is stochastic.
(See also Claim \ref{cl:WellDefinednessOfHistoryMachine} in Appendix
\ref{appsec:Finitely_Characterized_Processes}.) 

\begin{Def}
\label{def:HeM}
Let $\Process = (\XXZ,\X,\P)$ be a finitely characterized, stationary, ergodic, finite-alphabet process. 
The history \eM\ $M_h(\Process)$ is defined as the triple $(\E^+, \XX, \{T^{(x)}\})$. 
\end{Def}
Note that $M_h$ is a valid HMM since $T$ is stochastic. 

\subsection{Examples}
\label{subsec:Examples}

In this section we present several examples of irreducible HMMs and the associated
\eMs\ for the processes that these HMMs generate. This should hopefully provide some 
useful intuition for the definitions. For the sake of brevity, descriptions of the history
\eM\ constructions in our examples will be less detailed than in the formal definition
given above, but the ideas should be clear. In all cases, 
the process alphabet is the binary alphabet $\XX = \{0,1\}$.

\begin{Exa}
Even Machine
\label{ex:Even} 
\end{Exa}

The first example we consider, shown in Figure \ref{fig:Even}, is the
generating HMM $M$ for the Even Process previously introduced in Section
\ref{subsec:HiddenMarkovModels}. It is easily seen that this HMM is both
irreducible and unifilar and, also, that it has probabilistically distinct
states. State $\cs_1$ can generate the symbol $0$, whereas state $\cs_2$
cannot. $M$ is therefore a generator \eM, and by Theorem
\ref{thm:GeneratorsAreHistoryMachines} below the history \eM\ $M_h$ for the
process $\Process$ that $M$ generates is isomorphic to $M$. The Fischer cover
for the sofic shift $\mbox{supp}(\Process)$ is also isomorphic to $M$,
if probabilities are removed from the edge labels in $M$.  

\begin{figure}[ht]
\includegraphics[scale=1.0]{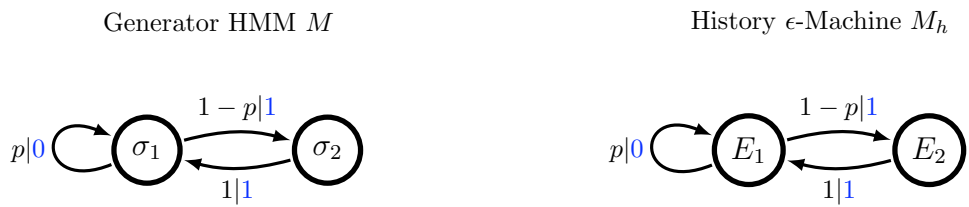}
\vspace{2 mm}
\caption{The Even Machine $M$ (left) and associated history \eM\ $M_h$ (right) 
for the process $\Process$ generated by $M$. $p \in (0,1)$ is a parameter.}
\label{fig:Even}
\end{figure}

More directly, the history \eM\ states for $\Process$ can be deduced by noting
that $0$ is a \emph{synchronizing word} for $M$ \cite{Trav11a}: It synchronizes
the observer to state $\cs_1$. Thus, for any nontrivial past $\past$ terminating in
$\ms_{-1} = 0$, the initial state $s_0$ must be $\cs_1$. By unifilarity, any
nontrivial past $\past$ terminating in a word of the form $01^n$ for some
$n \geq 0$ also uniquely determines the initial state $s_0$. For $n$ even, we
must have $s_0 = \cs_1$ and, for $n$ odd, we must have $s_0 = \cs_2$. Since
a.e. infinite past $\past$ generated by $M$ contains at least one $0$ and the
distributions over future sequences $\future$ are distinct for the two states
$\cs_1$ and $\cs_2$, the process $\Process$ is finitely characterized with
exactly two positive probability equivalence classes of infinite pasts: 
$E_1 = \{\past = \ldots 01^n: n \mbox{ is even } \}$ and
$E_2 = \{\past = \ldots 01^n: n \mbox{ is odd } \}$.
These correspond to the states $\cs_1$ and $\cs_2$ of $M$, respectively. More
generally, a similar argument holds for any \emph{exact} generator \eM. That
is, any generator \eM\ having a finite synchronizing word $w$ \cite{Trav11a}.
 
\begin{Exa}
Alternating Biased Coins Machine
\label{ex:ABC} 
\end{Exa}

Figure \ref{fig:ABC} depicts a generating HMM $M$ for the Alternating Biased
Coins (ABC) Process. This process may be thought of as being generated by
alternately flipping two coins with different biases $p \not= q$. The
phase---$p$-bias on odd flips or $p$-bias on even flips---is chosen uniformly
at random. $M$ is again, by inspection, a generator \eM: irreducible and unifilar with
probabilistically distinct states. Therefore, by Theorem
\ref{thm:GeneratorsAreHistoryMachines} below, the history \eM\ $M_h$ for the
process $\Process$ that $M$ generates is again isomorphic to $M$. However, the
Fischer cover for the sofic shift $\mbox{supp}(\Process)$ is not isomorphic to
$M$. The support of $\Process$ is the full shift $\XXZ$, so the Fischer cover
consists of a single state transitioning to itself on both symbols $0$ and $1$. 

\begin{figure}[h]
\includegraphics[scale=1.0]{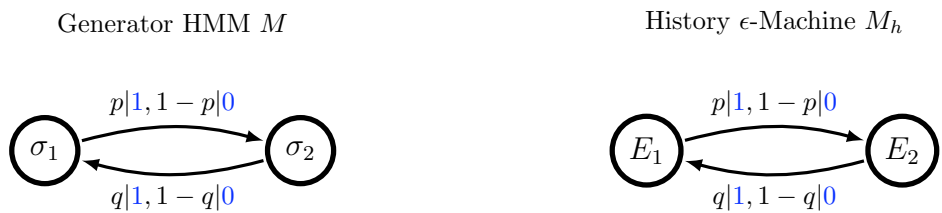}
\vspace{2 mm}
\caption{The Alternating Biased Coins (ABC) Machine $M$ (left) and associated
  history \eM\ $M_h$ (right) for the process $\Process$ generated by $M$.
  $p,q \in (0,1)$ are parameters, $p \not=q$.
  }
\label{fig:ABC}
\end{figure}

In this simple example, the history \eM\ states can also be deduced directly,
despite the fact that the generator $M$ does not have a synchronizing word.
If the initial state is $s_0 = \cs_1$, then by the strong law of large numbers
the limiting fraction of $1$s at odd time steps in finite-length past blocks
$\past^t$ converges a.s. to $q$. Whereas, if the initial state is $s_0 = \cs_2$,
then the limiting fraction of $1$s at odd time steps converges a.s. to $p$. Therefore, 
the initial state $s_0$ can be inferred a.s. from the complete past $\past$, so
the process $\Process$ is finitely characterized with two positive probability
equivalence classes of infinite pasts $E_1$ and $E_2$, corresponding to the two
states $\cs_1$ and $\cs_2$. Unlike the exact case, however, arguments like this
do not generalize as easily to other nonexact generator \eMs.

\begin{Exa}
Nonminimal Noisy Period-$2$ Machine
\label{ex:NP2} 
\end{Exa}

Figure \ref{fig:NP2} depicts a nonminimal generating HMM $M$ for the Noisy
Period-$2$ (NP2) Process $\Process$ in which $1$s alternate with random
symbols. $M$ is again unifilar, but it does not have probabilistically distinct
states and is, therefore, not a generator \eM. States $\cs_1$ and $\cs_3$ have
the same probability distribution over future output sequences as do states
$\cs_2$ and $\cs_4$.  

\begin{figure}[ht]
\includegraphics[scale=1.0]{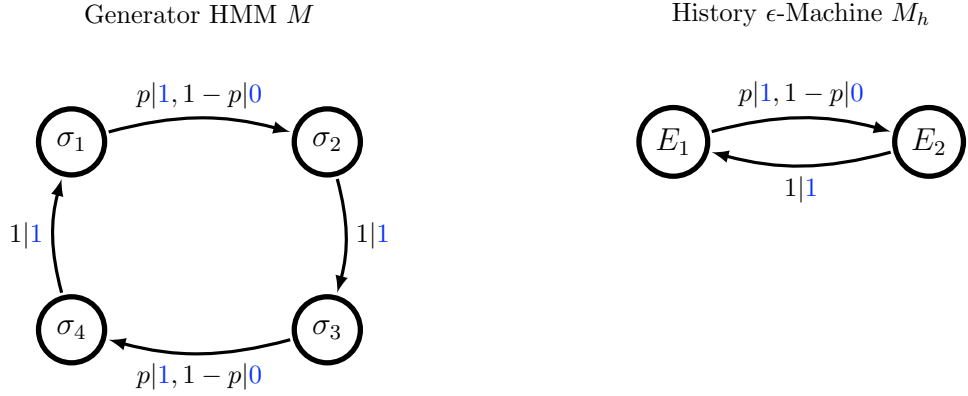}
\vspace{2 mm}
\caption{A nonminimal generating HMM $M$ for the Noisy Period-$2$ (NP2) Process
  (left), and the associated history \eM\ $M_h$ for this process (right).
  $p \in (0,1)$ is a parameter.
  }
\label{fig:NP2}
\end{figure}

There are two positive probability equivalence classes of pasts $\past$ for the
process $\Process$: Those containing $0$s at a subset of the odd time steps,
and those containing $0$s at a subset of the even time steps. Those with $0$s at
odd time steps induce distributions over future output equivalent to that
from states $\cs_2$ and $\cs_4$. While those with $0$s at even time steps
induce distributions over future output equivalent to that from states $\cs_1$
and $\cs_3$. Thus, the \eM\ for $\Process$ consists of just two states
$E_1 \sim \{\cs_1, \cs_3\}$ and $E_2 \sim \{\cs_2, \cs_4\}$. In general, for a
unifilar HMM without probabilistically distinct states the \eM\ is formed by
grouping together equivalent states in a similar fashion.

\begin{Exa}
Simple Nonunifilar Source 
\label{ex:SNS} 
\end{Exa}

Figure \ref{fig:SNS} depicts a generating HMM $M$ known as the Simple
Nonunifilar Source (SNS) \cite{Crut92c}. The output process $\Process$ generated by $M$
consists of long sequences of $1$s broken by isolated $0$s. As its name
indicates, $M$ is nonunifilar, so it is not an \eM. 

\begin{figure}[ht]
\includegraphics[scale=1.0]{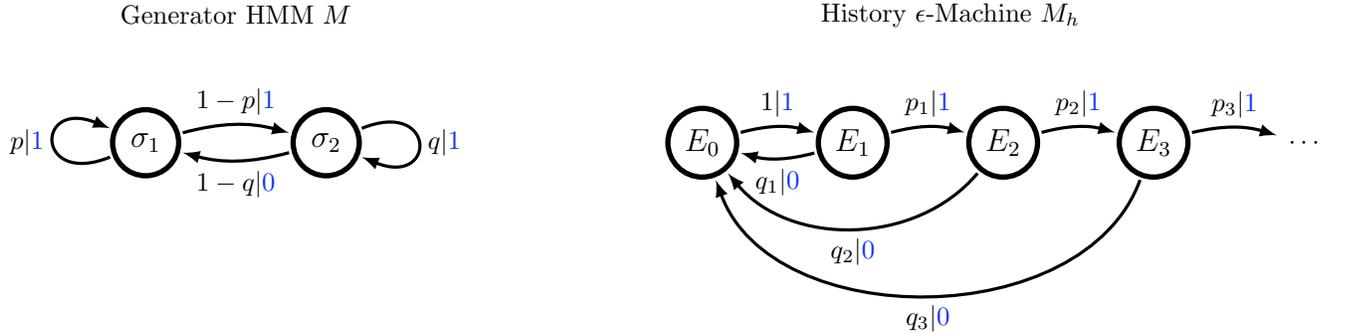}
\vspace{2 mm}
\caption{The Simple Nonunifilar Source (SNS) $M$ (left) and associated history
	\eM\ $M_h$ (right) for the process $\Process$ generated by $M$. In the 
	history \eM, $q_n + p_n = 1$ for each $n \in \N$ and $(q_n)_{n \in \N}$ is an
	increasing sequence defined by: 
	$q_n = (1-q) \cdot \left( (1-p) \sum_{m=0}^{n-1} p^m q^{n-1-m} \right) /
         \left( p^n + (1-p) \sum_{m=0}^{n-1} p^m q^{n-1-m} \right)$.
	}
\label{fig:SNS}
\end{figure}

Symbol $0$ is a synchronizing word for $M$, so all pasts $\past$ ending in
a $0$ induce the same probability distribution over future output sequences
$\future$: Namely, the distribution over futures given by starting $M$ in the
initial state $s_0 = \cs_1$. However, since $M$ is nonunifilar, an observer
does not remain synchronized after seeing a $0$. Any nontrivial past of the
form $\past = \ldots 01^n$ induces the same distribution over the initial state
$s_0$ as any other. However, for $n \geq 1$ there is some possibility of being
in both $\cs_1$ and $\cs_2$ at time $0$. A direct calculation shows that the
distributions over $s_0$ and, hence, the distributions over future output
sequences $\future$ are distinct for different values of $n$.
Thus, since a.e. past $\past$ contains at least one $0$, it follows that the
process $\Process$ has a countable collection of positive probability
equivalence classes of pasts, comprising a set of full measure:
$\{E_n: n = 0,1,2 \ldots \}$ where $E_n = \{ \past = \ldots 01^n\}$. 
This leads to a \emph{countable-state history \eM} $M_h$ as depicted
on the right of Figure \ref{fig:SNS}. We will not address countable-state
machines further here, as other technical issues arise in this case. 
Conceptually, however, it is similar to the finite-state case and
may be depicted graphically in an analogous fashion.  

\section{Equivalence}
\label{sec:Equivalence}

We will show that the two \eM\ definitions---history and generator---are
equivalent in the following sense: 
\begin{enumerate}
\item If $\Process$ is the process generated by a generator \eM\ $M_g$, then
	$\Process$ is finitely characterized and the history \eM\ $M_h(\Process)$
	is isomorphic to $M_g$ as a hidden Markov model. 
\item If $\Process$ is a finitely characterized, stationary, ergodic,
	finite-alphabet process, then the history \eM\ $M_h(\Process)$, 
	when considered as a hidden Markov model,
	is also a generator \eM. And, the process $\Process'$ generated by $M_h$
	is the same as the original process $\Process$ from which the 
	history machine was derived.  
\end{enumerate}      

That is, there is a $1-1$ correspondence between finite-state generator
\eMs\ and finite-state history \eMs. Every generator \eM\ is also a history
\eM, for the same process $\Process$ it generates. Every history
\eM\ is also a generator \eM, for the same process $\Process$ from which
it was derived.

\subsection{Generator \EMs\ are History \EMs} 
\label{subsec:Generator_eMachines_are_History_eMachines}

In this section we establish equivalence in the following direction:
\begin{The}
\label{thm:GeneratorsAreHistoryMachines}
If $\Process = (\XXZ, \X, \P)$ is the process generated by a generator \eM\ $M_g$,
then $\Process$ is finitely characterized and the history \eM\ $M_h(\Process)$
is isomorphic to $M_g$ as a hidden Markov model. 
\end{The}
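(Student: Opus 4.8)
The plan is to prove the theorem through \emph{synchronization}: the observation that, for a unifilar model with probabilistically distinct states, watching a sufficiently long past almost surely pins down the current internal state of $M_g$. Write $\CSSet = \{\cs_1, \ldots, \cs_N\}$ for the states of $M_g$, let $\pi$ be its stationary distribution, and work in the joint symbol--state process $\Processt$, in which $\csr_0$ denotes the (random) state occupied at time $0$, distributed as $\pi$. For any past word $\past^t = \ms_{-t} \ldots \ms_{-1}$ of positive probability, conditioning on $\csr_0$ and using that under the Markov dynamics the future is independent of the past given the current state yields the mixed-state decomposition
\begin{align}
\P(w | \past^t) = \sum_{i=1}^N \Pt(\csr_0 = \cs_i | \past^t) \, \PB_{\cs_i}(w), \qquad w \in \XX^* .
\label{eq:mixedstate}
\end{align}
Thus the conditional future distribution is an average of the per-state distributions $\PB_{\cs_i}(\cdot)$, weighted by the observer's posterior over $\csr_0$.

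First I would invoke the synchronization results of \cite{Trav11a, Trav11b}: since $M_g$ is irreducible, unifilar, and has probabilistically distinct states, the observer synchronizes almost surely, i.e.\ the posterior vector $\big(\Pt(\csr_0 = \cs_i | \past^t)\big)_{i}$ converges as $t \to \infty$ to the point mass on the true state $\csr_0$, for $\Pt$-a.e.\ realization. Feeding this into \eqnref{eq:mixedstate} shows that $\lim_{t\to\infty}\P(w | \past^t) = \PB_{\csr_0}(w)$ exists for every $w$ along almost every past; in particular almost every past is regular and $\PB(w|\past) = \PB_{\csr_0}(w)$. This defines, on a full-measure set of regular pasts, a map $\phi(\past) = \csr_0$ assigning to a past the state to which it synchronizes (measurability of $\phi$ and of the resulting classes being the sort of technicality relegated to the appendices).

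Next I would identify the equivalence classes. For a.e.\ pasts $\past, \past'$ we have $\past \sim \past'$ iff $\PB_{\phi(\past)}(w) = \PB_{\phi(\past')}(w)$ for all $w \in \XX^*$, which by the probabilistically-distinct-states property of Definition \ref{def:GeM} holds iff $\phi(\past) = \phi(\past')$. Hence, modulo null sets, there are exactly $N$ positive-probability classes $E_i = \phi^{-1}(\cs_i)$, with $\P(E_i) = \Pt(\csr_0 = \cs_i) = \pi_i > 0$ (positivity from irreducibility) and $\sum_i \P(E_i) = \sum_i \pi_i = 1$. This is precisely the statement that $\Process$ is finitely characterized with $\E^+ = \{E_1, \ldots, E_N\}$.

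Finally I would check that $E_i \leftrightarrow \cs_i$ is an HMM isomorphism. Emission probabilities agree because $\PB(\ms | E_i) = \PB_{\cs_i}(\ms)$ directly from $\PB(w|\past)=\PB_{\csr_0}(w)$ with $w=\ms$. For the transition structure, appending a symbol $\ms$ with $\PB(\ms| E_i)>0$ to a past synchronized to $\cs_i$ leaves the observer synchronized, now to $\delta(\cs_i,\ms)$ by unifilarity; hence $\phi(\past\ms) = \delta(\phi(\past),\ms)$ and therefore $\delta_h(E_i,\ms) = E_{\delta(\cs_i,\ms)}$, matching the transition function of $M_g$ together with its label $\ms$ and its probability $\PB_{\cs_i}(\ms) = T^{(\ms)}_{ij}$. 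I expect the main obstacle to be the synchronization step itself---converting the external almost-sure synchronization results into the clean pointwise identity $\PB(w|\past)=\PB_{\csr_0}(w)$ on regular pasts---together with the subsidiary point that synchronization is preserved under appending an admissible symbol, which is exactly what makes the transition map of $M_h$ collapse onto that of $M_g$.
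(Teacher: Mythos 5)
Your proposal is correct and follows essentially the same route as the paper's proof: import the almost-sure synchronization results of \cite{Trav11a,Trav11b}, combine them with the mixed-state decomposition $\P(w|\past^t) = \sum_i \Pt(\csr_0 = \cs_i|\past^t)\,\PB_{\cs_i}(w)$ to conclude that a.e.\ past is regular with $\PB(w|\past) = \PB_{\csr_0}(w)$, identify the $N$ positive-probability classes with the states via the probabilistically-distinct-states property, and verify the edge-preserving bijection. The only cosmetic difference is in the last step, where you argue that synchronization is preserved under appending an admissible symbol, while the paper instead uses the algebraic identities $\PB(\ms w|E_i) = \PB(\ms|E_i)\PB(w|E_j)$ and $\PB_{\cs_i}(\ms w) = \PB_{\cs_i}(\ms)\PB_{\cs_{j'}}(w)$ together with probabilistic distinctness to force $j = j'$; both arguments rest on the same underlying facts.
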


The key ideas in proving this theorem come from the study of synchronization
to generator \eMs\ \cite{Trav11a,Trav11b}. In order to state these ideas precisely,
however, we first need to introduce some terminology.

Let $M_g$ be a generator \eM, and let $\Process = (\XXZ, \X, \P)$ and
$\Processt = (\XSZ, \XcrossS, \Pt)$ be the associated symbol and symbol-state
processes generated by $M_g$ as in Section \ref{subsec:HiddenMarkovModels}.
Further, let the random variables $\MS_t: \XSZ \rightarrow \XX$ and
$\CS_t:\XSZ \rightarrow \CSSet$ be the natural projections
$\MS_t(\biinfinity,\biinfinitystates) =\ms_t$ 
and $\CS_t(\biinfinity,\biinfinitystates) = \csr_t$, and let
$\Future^t = \MS_0 \ldots \MS_{t-1}$
and $\Past^t = \MS_{-t} \ldots \MS_{-1}$. 

The \emph{process language} $\L(\Process)$ is the set of words $w$ of positive
probability: $\L(\Process) = \{w \in \XX^*: \P(w) > 0 \}$. For a given word
$w \in \L(\Process)$, we define $\phi(w) = \Pt(\CSSet|w)$ to be an observer's
\emph{belief distribution} as to the machine's current state after observing
the word $w$. Specifically, for a length-$t$ word $w \in \L(\Process)$,
$\phi(w)$ is a probability distribution over the machine states
$\{\cs_1, \dots , \cs_N\}$ whose $i^{th}$ component is:
\begin{align}
\phi(w)_i 	& = \Pt(\CS_0 = \cs_i|\Past^t = w) \nonumber \\
		&= \Pt(\CS_0 = \cs_i,\Past^t = w)/\Pt(\Past^t = w) ~.
\end{align}
For a word $w \not\in \L(\Process)$ we will, by convention, take
$\phi(w) = \pi$. 

For any word $w$, $\csb(w)$ is defined to be the most likely machine 
state at the current time given that the word $w$ was just observed.
That is, $\csb(w) = \cs_{i^*}$, where $i^*$ is defined by the
relation $\phi(w)_{i^*} = \max_i$ $\phi(w)_i$. In the case of a tie, $i^*$ is
taken to be the lowest value of the index $i$ maximizing the quantity
$\phi(w)_i$. Also, $P(w)$ is defined to be the probability of the most
likely state after observing $w$:
\begin{align}
P(w) \equiv \phi(w)_{i^*}  ~.
\end{align}
And, $Q(w)$ is defined to be the combined
probability of all other states after observing $w$:
\begin{align}
Q(w) \equiv \sum_{i \not= i^*} \phi(w)_i = 1 - P(w) ~. 
\end{align}
So, for example, if $\phi(w) = (0.2,0.7,0.1)$ then $\csb(w) = \cs_2$, $P(w) = 0.7$, and $Q(w) = 0.3$. 

The most recent $t$ symbols are described by the  block random variable
$\Past^t$, and so we define the corresponding random variables
$\Phi_t = \phi(\Past^t)$, $\CSb_t = \csb(\Past^t)$, $P_t =  P(\Past^t)$, and
$Q_t = Q(\Past^t)$. Although the values depend only on the symbol sequence 
$\biinfinity$, formally we think of $\Phi_t$, $\CSb_t$, $P_t$, and $Q_t$ as defined on 
the cross product space $\XSZ$. Their realizations are denoted with
lowercase letters $\phi_t$, $\csrb_t$, $p_t$, and $q_t$, so that for a given
realization $(\biinfinity, \biinfinitystates) \in \XSZ$,
$\phi_t = \phi(\past^t)$, $\csrb_t = \csb(\past^t)$, $p_t = P(\past^t)$, and
$q_t = Q(\past^t)$. The primary result we use is the following exponential
decay bound on the quantity $Q_t$. 

\begin{Lem}
\label{lem:GeneratorConvergence}
For any generator \eM\ $M_g$ there exist constants $K > 0$ and $0 < \alpha < 1$
such that:
\begin{align} 
\Pt(Q_t > \alpha^t) \leq K \alpha^t, \mbox{ for all } t \in \N ~.
\end{align}
\end{Lem}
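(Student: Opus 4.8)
The plan is to obtain this exponential bound from the synchronization analysis of \cite{Trav11a, Trav11b}; indeed $Q_t$ measures precisely the observer's failure to have synchronized to the generator's internal state after $t$ observations, so the lemma is essentially a reformulation of the exponential synchronization estimate established there. I would first recast the statement in a more convenient filtering form. By stationarity, computing $\Phi_t = \phi(\Past^t)$ is the same as starting the state chain in its stationary distribution $\pi$ at time $-t$, observing the word $\Past^t = X_{-t}\ldots X_{-1}$, and forming the Bayesian posterior over the state $\CS_0$ at time $0$. Because $M_g$ is unifilar, this posterior has transparent structure: each initial state $\cs_i$ that can generate the observed word determines the current state $\delta(\cs_i,\Past^t)$ deterministically, so the spread of $\Phi_t$ --- and hence $Q_t$ --- is controlled entirely by how much stationary-weighted likelihood $\pi_i \PB_{\cs_i}(\Past^t)$ is carried by initial states whose trajectories land on a current state other than the true one.

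Next I would reduce the bound on $Q_t$ to a bound on the posterior mass of the \emph{true} state. Write $V_t = (\Phi_t)_{\CS_0}$ for the component of the belief vector indexed by the actual current state $\CS_0$, i.e. the posterior mass the observer assigns to the true state. Whenever $V_t > 1 - \alpha^t$ we have $\max_i (\Phi_t)_i \geq V_t > 1 - \alpha^t$, so $Q_t = 1 - \max_i (\Phi_t)_i < \alpha^t$; hence $\{Q_t > \alpha^t\} \subseteq \{V_t \leq 1 - \alpha^t\}$, and it suffices to show the latter event has probability at most $K\alpha^t$. Equivalently, I must bound the probability that some \emph{confusable} state --- one reached from a positive-$\pi$ initial state consistent with the observed word but distinct from the true current state --- retains posterior mass exceeding $\alpha^t$.

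The core of the argument is where probabilistic distinctness and a large-deviation estimate enter. For each ordered pair of distinct states there is, by Definition \ref{def:GeM}, a witnessing word with $\PB_{\cs_i}(w) \neq \PB_{\cs_j}(w)$; along a typical trajectory the empirical frequencies of such distinguishing words concentrate, by ergodicity of the irreducible edge chain, at the values dictated by the true state, so the likelihood ratio $\pi_j \PB_{\cs_j}(\Past^t) / \pi_i \PB_{\cs_i}(\Past^t)$ between the true initial state and any confusable one decays, driving the wrong current state's posterior mass to zero. The decay is exponential because the relevant empirical-frequency deviations for a finite-state, irreducible Markov chain obey a large-deviation bound of the form $K'\beta^t$ with $0 < \beta < 1$. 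A finite union bound over the finitely many ordered pairs of states and the finite set of distinguishing words then yields constants $K$ and $\alpha$ with $\Pt(V_t \leq 1 - \alpha^t) \leq K\alpha^t$, which is the claim.

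The main obstacle is the non-exact case. When $M_g$ possesses a finite synchronizing word the lemma is nearly immediate: unifilarity keeps the observer synchronized once such a word has been seen, and the probability that no synchronizing word occurs within the last $t$ symbols decays exponentially by a standard Markov-chain return-time estimate, forcing $Q_t = 0$ on that event. Without a synchronizing word, however --- as for the Alternating Biased Coins machine of Example \ref{ex:ABC} --- synchronization is only asymptotic and one cannot point to a single instant of certain synchronization; the argument must instead quantify how fast confusable states are \emph{probabilistically} separated, which is exactly where the large-deviation control on empirical symbol statistics, combined with the probabilistically-distinct-states hypothesis, does the real work. Assembling that estimate uniformly over all confusable pairs, and converting a decay of expected state uncertainty into the stated high-probability exponential tail, is the delicate part carried out in \cite{Trav11a, Trav11b}.
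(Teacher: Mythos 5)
Your proposal is correct and takes essentially the same route as the paper: the lemma is obtained by invoking the Exact and Nonexact Machine Synchronization Theorems of \cite{Trav11a,Trav11b} together with the stationarity observation that the observer's doubt about $\CS_0$ given $\Past^t$ has the same distribution as the doubt about $\CS_t$ given $\Future^t$ treated there. Your additional sketch of the internal mechanism of those theorems (likelihood-ratio separation of confusable states via large deviations, and the exact versus nonexact dichotomy) is extra heuristic detail that the paper does not attempt, but it does not change the fact that both proofs ultimately rest on the cited synchronization results.
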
 

\begin{proof}
This follows directly from the Exact Machine Synchronization Theorem of
\cite{Trav11a} and the Nonexact Machine Synchronization Theorem of
\cite{Trav11b} by stationarity.
(Note that the notation used there differs slightly from that here by a time shift
of length $t$. That is, $Q_t$ there refers to the observer's doubt in $\CS_t$
given $\Future^t$, instead of the observer's doubt in $\CS_0$ given $\Past^t$.
Also, $L$ is used as a time index rather than $t$ in those works.)
\end{proof}

Essentially, this lemma says that after observing a block of $t$ symbols it is
exponentially unlikely that an observer's doubt $Q_t$ in the machine state
will be more than exponentially small. Using the lemma we now
prove Theorem \ref{thm:GeneratorsAreHistoryMachines}.

\begin{proof} (Theorem \ref{thm:GeneratorsAreHistoryMachines})
Let $M_g$ be a generator \eM\ with state set
$\CSSet = \{\cs_1, \ldots , \cs_N \}$ and stationary distribution
$\pi = (\pi_1, \ldots , \pi_N)$. Let $\Process$ and $\Processt$ 
be the associated symbol and symbol-state processes generated by $M_g$. 
By Lemma \ref{lem:GeneratorConvergence} there exist constants $K>0$ and
$0<\alpha<1$ such that $\Pt(Q_t > \alpha^t) \leq K \alpha^t$, for all
$t \in \N$. Let us define sets:
\begin{align*}
& V_t = \{ (\biinfinity,\biinfinitystates): q_t \leq \alpha^t ~, \csr_0 = \csrb_t \} ~,\\
& V'_t = \{ (\biinfinity,\biinfinitystates): q_t \leq \alpha^t~, \csr_0 \not= \csrb_t \} ~,\\ 
& W_t = \{ (\biinfinity,\biinfinitystates): q_t > \alpha^t \} ~, \mbox{ and }\\
& U_t = W_t \cup V'_t ~.
\end{align*}
Then, we have:
\begin{align*}
\Pt(U_t) & = \Pt(V'_t) + \Pt(W_t) \\
  & \leq \alpha^t + K \alpha^t \\
  & = (K+1) \alpha^t ~.
\end{align*}
So:
\begin{align*}
\sum_{t = 1}^{\infty} \Pt(U_t)
  \leq \sum_{t = 1}^{\infty} (K+1)\alpha^t < \infty ~. 
\end{align*}
Hence, by the Borel-Cantelli Lemma, $\Pt(U_t \mbox{ occurs infinitely often})
= 0$. Or, equivalently, for $\Pt$ a.e. $(\biinfinity,\biinfinitystates)$
there exists $t_0 \in \N$ such that $(\biinfinity,\biinfinitystates) \in V_t$
for all $t \geq t_0$. Now, define:
\begin{align*}
C & = \{ (\biinfinity,\biinfinitystates): \mbox{ there exists } t_0 \in \N \mbox{ such that } 
(\biinfinity,\biinfinitystates) \in V_t \mbox{ for all } t \geq t_0 \} ~, \\
D_i & =  \{ (\biinfinity,\biinfinitystates): \csr_0 = \cs_i \} ~, \mbox{ and } \\ 
C_i & = C \cap D_i ~. 
\end{align*}
According to the above discussion $\Pt(C) = 1$ and, clearly, $\Pt(D_i) = \pi_i$.
Thus, $\Pt(C_i) = \Pt(C \cap D_i) = \pi_i$. Also, by the convention for
$\phi(w), w \not\in \L(\Process)$, we know that for every
$(\biinfinity, \biinfinitystates) \in C_i$, the corresponding symbol past
$\past$ is nontrivial. So, the conditional probabilities $\P(w|\past^t)$ are
well defined for each $t$. 

Now, given any $(\biinfinity, \biinfinitystates) \in C_i$ take $t_0$ sufficiently large so that for all
$t \geq t_0$, $(\biinfinity,\biinfinitystates) \in V_t$. Then, for $t \geq t_0$, $\csrb_t = \cs_i$ and 
$q_t \leq \alpha^t$. So, for any word $w \in \XX^*$ and any $t \geq t_0$, we have:
\begin{align}
|\P(w| & \past^t) - \PB_{\cs_i}(w)| \nonumber \\ 
& = \left| \Pt(\Future^{|w|} = w|\Past^t = \past^t) - \Pt(\Future^{|w|} = w|\CS_0 = \cs_i) \right| \nonumber \\
& \stackrel{(*)}{=}  \left|  \left\{  \sum_j \Pt(\Future^{|w|} = w|\CS_0 = \cs_j) \Pt(\CS_0 = \cs_j|\Past^t = \past^t) \right\}
- \Pt(\Future^{|w|} = w|\CS_0 = \cs_i) \right|
  \nonumber \\
& =  \left|  \left\{  \sum_{j \not= i} 
  \Pt(\Future^{|w|} = w|\CS_0 = \cs_j)
  \Pt(\CS_0 = \cs_j|\Past^t = \past^t) \right\}
  - \left(1 - \Pt(\CS_0 = \cs_i|\Past^t = \past^t) \right)
  \Pt(\Future^{|w|} = w|\CS_0 = \cs_i) \right|
  \nonumber \\
& \leq  \left\{  \sum_{j \not=i} \Pt(\Future^{|w|} = w|\CS_0 = \cs_j)
  \Pt(\CS_0 = \cs_j|\Past^t = \past^t) \right\}
  + \left(1 - \Pt(\CS_0 = \cs_i|\Past^t = \past^t) \right)
  \Pt(\Future^{|w|} = w|\CS_0 = \cs_i) \nonumber \\
& \leq  \left\{ \sum_{j \not=i} \Pt(\CS_0 = \cs_j|\Past^t = \past^t) \right\}
  + \left(1 - \Pt(\CS_0 = \cs_i|\Past^t = \past^t) \right)
  \nonumber \\
& = 2 q_t \nonumber \\
& \leq 2 \alpha^t ~. \nonumber 
\end{align} 
Step (*) follows from the fact that $\Past^m$ and $\Future^n$ are conditionally
independent given $\CS_0$ for any $m,n \in \N$, by construction of the measure
$\Pt$. Since $|\P(w| \past^t) - \PB_{\cs_i}(w)| \leq 2 \alpha^t$ for all
$t \geq t_0$, we know $\lim_{t \to \infty} \P(w|\past^t) = \PB_{\cs_i}(w)$ exists. 
Since this holds for all $w \in \XX^*$, we know $\past$ is regular and
$\PB(w|\past) = \PB_{\cs_i}(w)$ for all $w \in \XX^*$. 

Now, let us define equivalence classes $E_i$, $i = 1, \ldots ,N$, by:
\begin{align*}
E_i = \{ \past: \past \mbox{ is regular and } \PB(w|\past) = \PB_{\sigma_i}(w) \mbox{ for all } w \in \XX^* \} ~.  \\
\end{align*}
And, also, for each $i = 1, \ldots, N$ let:
\begin{align*}
\widetilde{E}_i = \{ (\biinfinity, \biinfinitystates): \past \in E_i \} ~.
\end{align*}
By results from Appendix \ref{appsec:Measurability_of_Equivalence_Classes} we know
that each equivalence class $E_i$ is measurable, so each set $\widetilde{E}_i$
is also measurable with $\Pt(\widetilde{E}_i) = \P(E_i)$. And, for each $i$,
$C_i \subseteq \widetilde{E}_i$, so
$\P(E_i) = \Pt(\widetilde{E}_i) \geq \Pt(C_i) = \pi_i$. Since
$\sum_{i=1}^N \pi_i = 1$ and the equivalence classes $E_i, i =
1,\ldots,N$, are all disjoint,
it follows that $\P(E_i) = \pi_i$ for each $i$, and 
$\sum_{i=1}^N \P(E_i) = \sum_{i=1}^N \pi_i = 1$. Hence, the process $\Process$
is finitely characterized with positive probability equivalences classes $\E^+ = \{E_1, \ldots, E_N\}$. 

Moreover, the equivalence classes $\{E_1, \ldots , E_N\}$---the history
\eM\ states---have a natural one-to-one correspondence with the states of the
generating \eM: $E_i \sim \cs_i, i = 1, \dots ,N$. It remains only to verify
that this bijection is also edge preserving and, thus, an isomorphism. 
Specifically, we must show that:
\begin{enumerate}
\item For each $i=1, \ldots ,N$ and $x \in \XX$, $\PB(x|E_i) = \PB_{\cs_i}(x)$, and
\item For all $i$ and $x$ with $\PB(x|E_i) = \PB_{\cs_i}(x) > 0$,
	$\delta_h(E_i,x) \cong \delta(\cs_i,x)$. That is, if $\delta_h(E_i,x) = E_j$
	and $\delta(\cs_i,x) = \cs_{j'}$, then $j = j'$. 
\end{enumerate}  

Point 1 follows directly from the definition of $E_i$. To show Point 2, take
any $i$ and $x$ with $\PB(x|E_i) = \PB_{\cs_i}(x) > 0$ and let $\delta_h(E_i,x) = E_j$
and $\delta(\cs_i,x) = \cs_{j'}$. Then, for any word $w \in \XX^*$, we have:
\begin{itemize}
\item[(i)] $\PB(\ms w|E_i) = \PB_{\cs_i}(\ms w)$, by definition of the equivalence
	class $E_i$,
\item[(ii)] $\PB(\ms w|E_i) = \PB(x|E_i) \cdot \PB(w|E_j)$, by Claim
	\ref{cl:ConsistencyOfOneStepTransitions} in Appendix
	\ref{appsec:Probabilistic_Consistency_of_Equivalence_Class_Transitions},
	and
\item[(iii)] $\PB_{\cs_i}(\ms w) = \PB_{\cs_i}(x) \cdot \PB_{\cs_{j'}}(w)$,
	by Equation (\ref{eq:StateToWordProb}) applied to a unifilar HMM.
\end{itemize}
Since $\PB(x|E_i) = \PB_{\cs_i}(x) > 0$, it follows that
$\PB(w|E_j) = \PB_{\cs_{j'}}(w)$. Since this holds for all $w \in \XX^*$ and
the states of the generator are probabilistically distinct, by assumption,
it follows that $j = j'$. 

\end{proof}

\begin{Cor}
\label{cor:GeneratorsAreUnique}
Generator \eMs\ are unique: Two generator \eMs\ $M_{g_1}$ and $M_{g_2}$
that generate the same process $\Process$ are isomorphic.
\end{Cor}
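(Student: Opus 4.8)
The plan is to derive this immediately from Theorem~\ref{thm:GeneratorsAreHistoryMachines}, using the fact that the history \eM\ is a canonical object attached to the process alone. The essential observation is that the map $M_g \mapsto \Process(M_g)$ sending a generator \eM\ to the process it generates has a canonical one-sided inverse, namely $\Process \mapsto M_h(\Process)$: the history \eM\ construction of Definition~\ref{def:HeM} depends only on the word probabilities $\P(w)$, hence only on $\Process$ itself, and produces a single well-defined HMM (up to isomorphism, since its states are the canonically determined equivalence classes $\E^+$). Thus there is no ambiguity in speaking of \emph{the} history \eM\ of $\Process$.

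Concretely, suppose $M_{g_1}$ and $M_{g_2}$ are generator \eMs\ with $\Process(M_{g_1}) = \Process(M_{g_2}) = \Process$. First I would apply Theorem~\ref{thm:GeneratorsAreHistoryMachines} to $M_{g_1}$: since $\Process$ is generated by $M_{g_1}$, the theorem guarantees that $\Process$ is finitely characterized and that its history \eM\ $M_h(\Process)$ is isomorphic to $M_{g_1}$. Next I would apply the very same theorem to $M_{g_2}$, which generates the identical process $\Process$; this yields $M_h(\Process) \cong M_{g_2}$. Crucially, the target $M_h(\Process)$ is the \emph{same} HMM in both applications, precisely because $M_h$ is a function of $\Process$ and the two generators share that process.

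To conclude, I would invoke the fact that isomorphism of HMMs—a bijection between state sets preserving edges together with their symbol labels and transition probabilities—is an equivalence relation, in particular symmetric and transitive. From $M_{g_1} \cong M_h(\Process)$ and $M_{g_2} \cong M_h(\Process)$, symmetry gives $M_h(\Process) \cong M_{g_2}$, and transitivity then yields $M_{g_1} \cong M_{g_2}$, as desired.

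There is no genuine obstacle here: the corollary is a formal consequence of Theorem~\ref{thm:GeneratorsAreHistoryMachines} once one recognizes that the history \eM\ is intrinsic to the process. The only point requiring a moment's care is the implicit claim that $M_h(\Process)$ is well defined up to isomorphism rather than literally unique—but this is exactly what the construction in Section~\ref{subsec:History_eMachines} provides, since the states $\E^+$ are the intrinsic $\sim$-equivalence classes and the transition matrices $T^{(x)}$ are determined by the process, so any two realizations of $M_h(\Process)$ are canonically isomorphic.
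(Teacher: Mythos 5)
Your proposal is correct and follows exactly the paper's own argument: apply Theorem~\ref{thm:GeneratorsAreHistoryMachines} to each generator, observe that both are isomorphic to the single canonical object $M_h(\Process)$, and conclude by symmetry and transitivity of isomorphism. The extra remarks on the intrinsic well-definedness of $M_h(\Process)$ are a fine (if implicit in the paper) point of care, but do not change the substance.
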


\begin{proof}
By Theorem \ref{thm:GeneratorsAreHistoryMachines} the two generator \eMs\ are
both isomorphic to the process's history \eM\ $M_h(\Process)$ and, hence, 
isomorphic to each other.
\end{proof}

\begin{Rem}
Unlike history \eMs\ that are unique by construction, generator \eMs\ are not
by definition unique. And, it is not a priori clear that they must be. Indeed, general
HMMs are not unique. There are infinitely many nonisomorphic HMMs for
any given process $\Process$ generated by some HMM. Moreover, if
either the unifilarity or probabilistically distinct states condition is removed
from the definition of generator \eMs, then uniqueness no longer holds. It is only
when both of these properties are required together that one obtains uniqueness. 
\end{Rem}

\subsection{History \EMs\ are Generator \EMs} 
\label{subsec:History_eMachines_are_Generator_eMachines}

In this section we establish equivalence in the reverse direction:

\begin{The}
\label{thm:HistoryMachinesAreGenerators}
If $\Process$ is a finitely characterized, stationary, ergodic, finite-alphabet
process, then the history \eM\ $M_h(\Process)$, when considered as a hidden
Markov model, is also a generator \eM. And, the process $\Process'$ generated
by $M_h$ is the same as the original process $\Process$ from which the 
history machine was derived.    
\end{The}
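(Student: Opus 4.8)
The plan is to verify that the history \eM\ $M_h(\Process)=(\E^+,\XX,\{T^{(\ms)}\})$ satisfies the three defining conditions of a generator \eM\ (Definition \ref{def:GeM})---irreducibility, unifilarity, and probabilistically distinct states---and then to check that the word probabilities of the process it generates coincide with those of $\Process$. Unifilarity is immediate: by construction each class $E_i$ and symbol $\ms$ with $\PB(\ms|E_i)>0$ has the single successor $\delta_h(E_i,\ms)$, so there is at most one outgoing $\ms$-edge from $E_i$. The remaining two structural conditions and the process-equality statement all rest on one bridging identity, which I isolate next.

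The bridge is the identity $\PB_{E_i}(w)=\PB(w|E_i)$ for every $E_i\in\E^+$ and $w\in\XX^*$, equating the HMM word-generation probability on the left with the conditional future probability that defines the states on the right. I would prove it by induction on $|w|$. The base case $\PB_{E_i}(\ms)=\PB(\ms|E_i)$ is exactly the definition of the matrices $T^{(\ms)}$. For the step, write $w=\ms w'$: unifilarity together with Equation (\ref{eq:WordProbsUnifilarHMM}) gives $\PB_{E_i}(\ms w')=\PB_{E_i}(\ms)\,\PB_{\delta_h(E_i,\ms)}(w')$, while Claim \ref{cl:ConsistencyOfOneStepTransitions} in Appendix \ref{appsec:Probabilistic_Consistency_of_Equivalence_Class_Transitions} gives the parallel factorization $\PB(\ms w'|E_i)=\PB(\ms|E_i)\,\PB(w'|\delta_h(E_i,\ms))$; applying the inductive hypothesis to the second factor closes the argument. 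Probabilistic distinctness is then immediate: if $E_i\neq E_j$ then by the very definition of the equivalence relation $\sim$ there is a word $w$ with $\PB(w|E_i)\neq\PB(w|E_j)$, whence $\PB_{E_i}(w)\neq\PB_{E_j}(w)$.

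For irreducibility I would argue from ergodicity rather than from Markov-chain theory. Since $\Process$ is finitely characterized, for $\P$-a.e.\ sequence $\biinfinity$ and every $t$ the past $\ldots\ms_{t-2}\ms_{t-1}$ is regular and lies in some class of $\E^+$; call this current state $\CS_t$. By the well-definedness of transitions established in the appendices, the states evolve deterministically via $\CS_{t+1}=\delta_h(\CS_t,\ms_t)$. The state sequence $(\CS_t)$ is a measurable factor of the ergodic process $\Process$ and is therefore itself ergodic, so by Birkhoff's theorem---using $\P(E_i)>0$ for every $E_i\in\E^+$---a.e.\ sequence visits each class infinitely often. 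Hence, given any $E_i,E_j\in\E^+$, there is a sequence and times $s<t$ with $\CS_s=E_i$ and $\CS_t=E_j$; the intervening word $w=\ms_s\ldots\ms_{t-1}$ then satisfies $\delta_h(E_i,w)=E_j$ with $\PB_{E_i}(w)>0$, exhibiting a directed path from $E_i$ to $E_j$ in the graph of $M_h$. As $i,j$ are arbitrary, the graph is strongly connected. I expect this ergodicity-to-connectivity step to require the most care.

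Finally, to identify $\Process'$ with $\Process$, I would first observe that stationarity of $\Process$ makes $(\P(E_i))_i$ a stationary distribution for $T$: the emission law $\P(\MS_0=\ms\mid\CS_0=E_i)=\PB(\ms|E_i)$ (since $\PB(\ms|\cdot)$ agrees a.e.\ with a version of $\P(\ms|\Past)$) gives $\P(\CS_1=E_j\mid\CS_0=E_i)=T_{ij}$, whence $\sum_i\P(E_i)\,T_{ij}=\P(E_j)$ by stationarity of $(\CS_t)$. By the irreducibility just proved this stationary distribution is unique, so the generating distribution of $M_h$ is $\pi=(\P(E_i))_i$. Then, using the bridge identity and the tower property,
\begin{align*}
\P'(w)=\PB_\pi(w) &= \sum_i\P(E_i)\,\PB_{E_i}(w) = \sum_i\P(E_i)\,\PB(w|E_i) \\
&= \Ex_{\Pm}[\PB(w|\Past)] = \P(w) ~,
\end{align*}
for every $w\in\XX^*$, where partitioning a.e.\ past among the classes $E_i$ turns the sum into the expectation. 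Since a stationary process is determined by its word probabilities, $\Process'=\Process$, completing the proof.
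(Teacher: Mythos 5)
Your proposal is correct. Its treatment of unifilarity, of the bridge identity $\PB_{E_i}(w)=\PB(w|E_i)$ (this is exactly Lemma \ref{lem:ConditionalWordProbs} in the paper, and your induction via Claim \ref{cl:ConsistencyOfOneStepTransitions} is a legitimate substitute for the paper's use of Claim \ref{cl:ConistencyOfChainedTransitions}, provided you also dispose of the case $\PB(\ms|E_i)=0$, where $\delta_h(E_i,\ms)$ is undefined but both sides vanish, the right-hand side by Claim \ref{cl:ExtensionConsistencyBound}), of probabilistic distinctness, and of the final identification $\P'=\P$ via the stationarity of $\mu=(\P(E_i))_i$ all track the paper's proof closely. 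Where you genuinely diverge is strong connectivity. The paper first observes (Lemma \ref{lem:DisjointStronglyConnectedComponents}) that the fully supported stationary distribution $\mu$ forces the graph into disjoint strongly connected components, then supposes there are at least two, notes each is itself a generator \eM\ and that no two are isomorphic, invokes the uniqueness of generator \eMs\ (Corollary \ref{cor:GeneratorsAreUnique}) to conclude the components generate distinct stationary processes, and contradicts ergodicity because $\Process$ would be a nontrivial mixture of them. You instead run the state trajectory $\CS_{t+1}=\delta_h(\CS_t,\ms_t)$ along a typical realization: it is a measurable factor of an ergodic system, hence ergodic, so Birkhoff's theorem together with $\P(E_i)>0$ shows a.e.\ realization visits every class of $\E^+$, yielding an explicit positive-probability word carrying $E_i$ to $E_j$. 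Both arguments work. Yours is more constructive and, notably, does not pass through Corollary \ref{cor:GeneratorsAreUnique}, so it renders Theorem \ref{thm:HistoryMachinesAreGenerators} logically independent of Theorem \ref{thm:GeneratorsAreHistoryMachines}; the price is that you must check the a.s.\ well-definedness of the trajectory (that $\PB(\ms_t|\CS_t)>0$ for all $t$ simultaneously, a countable intersection of full-measure events via Claim \ref{cl:ProbabilisticConsistency}), whereas the paper's algebraic route needs no ergodic-theoretic input beyond the bare definition of ergodicity.
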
 

Note that by Claim \ref{cl:WellDefinednessOfHistoryMachine} in Appendix
\ref{appsec:Finitely_Characterized_Processes} we know that for any finitely
characterized, stationary, ergodic, finite-alphabet process the history \eM\
$M_h(\Process) = (\E^+, \XX, \{ T^{(x)} \})$ is a valid hidden Markov model.
So, we need only show that this HMM has the three properties of a generator
\eM---strongly connected graph, unifilar transitions, and probabilistically
distinct states---and that the process $\Process'$ generated by this HMM is
the same as $\Process$. Unifilarity is immediate from the construction, but the
other claims take more work and require several lemmas to establish.
Throughout $\mu = (\mu_1, \ldots , \mu_N) \equiv (\P(E_1), \ldots , \P(E_N) )$,
where $\E^+ = \{E_1, \ldots , E_n\}$ is the set of positive probability equivalence classes 
for the process $\Process$. 

\begin{Lem}
\label{lem:EquivalenceClassPiStationarity}
The distribution $\mu$ over equivalence-class states is stationary for the
transition matrix $T = \sum_{x \in \XX} T^{(x)}$. That is, 
for any $1 \leq j \leq N$, $\mu_j = \sum_{i=1}^N \mu_i \cdot T_{ij}$.
\end{Lem}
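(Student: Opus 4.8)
The plan is to interpret the vector $\mu$ probabilistically as the distribution of the equivalence class containing the past of a $\P$-random bi-infinite sequence, and then to exploit stationarity together with the one-step transition structure already encoded in $\delta_h$. Working on the bi-infinite space $(\XXZ, \X, \P)$, write $\past = \ldots \ms_{-2}\ms_{-1}$ for the past at time $0$ and $\past' = \ldots \ms_{-1}\ms_0$ for the past at time $1$, and let $\MS_0$ be the random symbol at time $0$; at the level of realizations $\past' = \past\,\ms_0$. Since $\Process$ is finitely characterized, $\past$ lies in some class $E_i \in \E^+$ for $\P$-a.e.\ sequence, and $\P(\{\past \in E_i\}) = \Pm(E_i) = \mu_i$; the same holds for $\past'$. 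First I would observe that, by the well-definedness of the transitions (points 1 and 2 of the history \eM\ construction), for $\P$-a.e.\ sequence whose time-$0$ past lies in $E_i$ and whose emitted symbol $\ms_0 = x$ satisfies $\PB(x|E_i) > 0$, the time-$1$ past lies in $\delta_h(E_i,x)$. Hence, up to a $\P$-null set, the decomposition
\begin{align*}
\{\past' \in E_j\} = \bigcup_{i=1}^N \; \bigcup_{\substack{x : \PB(x|E_i) > 0, \\ \delta_h(E_i,x) = E_j}} \Bigl( \{\past \in E_i\} \cap \{\MS_0 = x\} \Bigr)
\end{align*}
holds, and the union is disjoint (distinct $i$ give disjoint classes, distinct $x$ give disjoint symbol events).

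The key computation is to evaluate each joint probability $\P(\{\past \in E_i\} \cap \{\MS_0 = x\})$. Because $E_i$ is $\Xm$-measurable (Appendix \ref{appsec:Measurability_of_Equivalence_Classes}) and $\PB(x|\cdot)$ is a version of the conditional probability $\P(\MS_0 = x \mid \Past)$, which moreover is constant and equal to $\PB(x|E_i)$ on $E_i$, the defining property of conditional expectation together with the fact that $\Pm$ is the projection of $\P$ to pasts gives
\begin{align*}
\P(\{\past \in E_i\} \cap \{\MS_0 = x\}) = \int_{E_i} \P(\MS_0 = x \mid \Past)\, d\Pm = \int_{E_i} \PB(x|\past)\, d\Pm = \PB(x|E_i)\,\mu_i ~.
\end{align*}

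Finally I would assemble the pieces. By stationarity the law of the past, and hence of its equivalence class, does not depend on the reference time, so $\P(\{\past' \in E_j\}) = \P(\{\past \in E_j\}) = \mu_j$. Taking $\P$-measure of the disjoint union above and substituting the joint probabilities yields
\begin{align*}
\mu_j = \sum_{i=1}^N \mu_i \sum_{x \in \XX} I(x,i,j)\,\PB(x|E_i) = \sum_{i=1}^N \mu_i \, T_{ij} ~,
\end{align*}
which is the claim. I expect the main obstacle to be the measure-theoretic step: justifying rigorously that $\PB(x|\cdot)$ serves as a genuine version of $\P(\MS_0 = x \mid \Past)$ when integrated over the measurable set $E_i$, and carefully discarding the various $\P$-null sets (trivial or nonregular pasts, the measure-zero equivalence classes outside $\E^+$, and the negligible event that a symbol with $\PB(x|E_i)=0$ is actually emitted) so that the disjoint-union identity is exact. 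Everything else is bookkeeping with the already-established transition structure and with stationarity.
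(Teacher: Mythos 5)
Your proposal is correct and is essentially the paper's own argument: the paper proves the identity via Claim~\ref{cl:EquivalenceClassStationarity} (shifting the past by one step using stationarity and decomposing over $E_i$ and $\ms_0 = x$) together with Claim~\ref{cl:ProbabilisticConsistency}, whose conditional-expectation computation of $\P(\{\past \in E_i\} \cap \{\MS_0 = x\}) = \P(E_i)\cdot\PB(x|E_i)$ is exactly the ``key computation'' you identify. The measure-theoretic point you flag as the main obstacle is handled in the paper by Claim~\ref{cl:VersionOfConditionalExpectation}, precisely as you anticipate.
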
 

\begin{proof}
This follows directly from Claim \ref{cl:EquivalenceClassStationarity} in Appendix
\ref{appsec:Finitely_Characterized_Processes} and the definition of the $T^{(x)}$ matrices. 
\end{proof}

\begin{Lem}
\label{lem:DisjointStronglyConnectedComponents}
The graph $G$ associated with the HMM $M_h = (\E^+, \XX, \{T^{(x)}\})$ consists entirely of 
disjoint strongly connected components. Each connected component of $G$ is strongly connected. 
\end{Lem}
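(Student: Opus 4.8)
The plan is to exploit the single structural fact we have about the transition matrix $T$ of $M_h$: by Lemma~\ref{lem:EquivalenceClassPiStationarity} the distribution $\mu=(\P(E_1),\dots,\P(E_N))$ is stationary, $\mu=\mu T$, and by the definition of $\E^+$ every coordinate is strictly positive, $\mu_i=\P(E_i)>0$. Intuitively, a finite stationary chain with full support can exhibit no transient behavior, so no probability can ever leak out of a strongly connected component; this is exactly what ``disjoint strongly connected components'' should encode. I would therefore first reduce the lemma to the purely graph-theoretic claim that $G$ has \emph{no} edge joining two distinct strongly connected components (SCCs). This reduction suffices: if no edge ever leaves its own SCC, then each weakly connected component of $G$ coincides with a single SCC and is thus strongly connected, which is precisely the assertion.

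To organize the argument I would pass to the condensation DAG $\hat G$ whose vertices are the SCCs $C_1,\dots,C_k$ of $G$, with a directed edge $C\to C'$ (for $C\neq C'$) whenever some state of $C$ has a $T$-transition to some state of $C'$; by construction $\hat G$ is acyclic. The central computation is to show that every \emph{source} of $\hat G$ (an SCC $S$ receiving no incoming inter-component edges) is in fact also a \emph{sink}, hence isolated. For such an $S$ one has $T_{ij}=0$ whenever $i\notin S$ and $j\in S$, so summing the stationarity relation $\mu_j=\sum_i\mu_i T_{ij}$ over $j\in S$ and using that $T$ is row-stochastic gives
\begin{align*}
\sum_{j\in S}\mu_j
 = \sum_{i\in S}\mu_i\sum_{j\in S}T_{ij}
 = \sum_{i\in S}\mu_i\Bigl(1-\sum_{j\notin S}T_{ij}\Bigr).
\end{align*}
Since the left-hand side equals $\sum_{i\in S}\mu_i$, this forces $\sum_{i\in S}\mu_i\sum_{j\notin S}T_{ij}=0$. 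Because each $\mu_i>0$ and each $T_{ij}\geq 0$, every summand must vanish, so $\sum_{j\notin S}T_{ij}=0$ for all $i\in S$: no edge leaves $S$, i.e.\ $S$ is a sink and therefore an isolated vertex of $\hat G$.

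Finally I would close the combinatorial gap from ``every source is isolated'' to ``$\hat G$ has no edges at all.'' If $\hat G$ contained any edge $u\to v$, then following incoming edges backward from $u$ through the finite acyclic graph reaches a source $s$ together with a directed path $s\to\cdots\to u\to v$ of positive length, exhibiting a source with an outgoing edge and contradicting the previous step. Hence $\hat G$ has no edges, every SCC is closed, and the reduction of the first paragraph yields the lemma. The step I would watch most carefully is exactly this last passage: the positivity-plus-stationarity computation directly controls only the source components, and one must invoke the acyclicity of $\hat G$ to propagate closedness to \emph{all} components rather than merely the sources. Equivalently one could cite the standard fact that transient states of a finite chain carry zero stationary mass (proved via $(T^n)_{ji}\to 0$), but I prefer the elementary summation above since it avoids any limiting argument and uses nothing beyond Lemma~\ref{lem:EquivalenceClassPiStationarity} and the strict positivity of $\mu$.
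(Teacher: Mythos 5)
Your proof is correct. It rests on exactly the same fact the paper uses---the existence of a stationary distribution $\mu = \mu T$ with every coordinate strictly positive---but where the paper disposes of the lemma in two sentences by citing \refcite{Levi06} for the standard result that a finite chain with fully supported stationary distribution has no transient behavior, you supply a self-contained derivation: the condensation-DAG reduction, the summation argument showing $\sum_{i\in S}\mu_i\sum_{j\notin S}T_{ij}=0$ forces every source component to be closed, and the acyclicity argument propagating this to all components. Each step checks out, including the reduction from ``no inter-SCC edges'' to the stated conclusion and the backward walk to a source in the finite DAG. What your version buys is independence from the textbook citation and avoidance of any limiting argument such as $(T^n)_{ji}\to 0$; what it costs is length for a fact that is genuinely standard. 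Your instinct to flag the passage from sources to all components as the delicate step is sound---that is precisely where a naive version of the argument would stall---and you close it correctly.
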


\begin{proof}
It is equivalent to show that the graphical representation of the associated
Markov chain with state set $\E^+$ and transition matrix $T$ consists entirely
of disjoint strongly connected components. But this follows directly from the
existence of a stationary distribution $\mu$ with $\mu_i = \P(E_i) > 0$ for
all $i$ \cite{Levi06}.  
\end{proof} 

\begin{Lem}
\label{lem:ConditionalWordProbs} 
For any $E_i\in \E^+$ and $w \in \XX^*$, $\PB(w|E_i) = \PB_{E_i}(w)$, 
where $\PB_{E_i}(w) \equiv \norm{e_i T^{(w)}}_1$ is the probability of generating the word
$w$ starting in state $E_i$ of the HMM $M_h = (\E^+, \XX, \{ T^{(x)} \})$ as defined
in Section \ref{subsec:HiddenMarkovModels}. 
\end{Lem}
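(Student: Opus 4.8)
The plan is to proceed by induction on the word length $|w|$, factoring both sides of the claimed identity through a single one-step transition and matching them via the induction hypothesis. The two quantities to reconcile are the process-derived conditional probability $\PB(w|E_i) = \PB(w|\past)$ for $\past \in E_i$, and the machine-generated probability $\PB_{E_i}(w) = \norm{e_i T^{(w)}}_1$. The entire content of the lemma is that the transition matrices $T^{(x)}$, which were built from the numbers $\PB(\ms|E_i)$ and the transition function $\delta_h$, reproduce the true word probabilities when iterated. Throughout I use that $M_h$ is unifilar, which is immediate from its construction.

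For the base case $|w| = 1$ I would write $w = x$ and read off the structure of the matrices directly. Since $T^{(x)}_{ij} = I(x,i,j)\cdot\PB(x|E_i)$ and unifilarity guarantees that, for fixed $i$ and $x$, at most one index $j$ (namely $E_j = \delta_h(E_i,x)$, when $\PB(x|E_i) > 0$) has $I(x,i,j) = 1$, it follows that $\PB_{E_i}(x) = \norm{e_i T^{(x)}}_1 = \sum_j T^{(x)}_{ij} = \PB(x|E_i)$. This same computation yields the sharper statement I would reuse in the inductive step: $e_i T^{(x)} = \PB(x|E_i)\, e_j$ with $E_j = \delta_h(E_i,x)$ when $\PB(x|E_i) > 0$, and $e_i T^{(x)} = 0$ when $\PB(x|E_i) = 0$.

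For the inductive step I would assume the claim for all words of length at most $L$ and all states, and take $w = xv$ with $|v| = L$. On the machine side, the matrix identity above gives $\PB_{E_i}(xv) = \norm{e_i T^{(x)} T^{(v)}}_1 = \PB(x|E_i)\cdot\norm{e_j T^{(v)}}_1 = \PB(x|E_i)\cdot\PB_{E_j}(v)$ when $\PB(x|E_i) > 0$, and $\PB_{E_i}(xv)=0$ otherwise. On the process side, Claim \ref{cl:ConsistencyOfOneStepTransitions} in Appendix \ref{appsec:Probabilistic_Consistency_of_Equivalence_Class_Transitions} supplies the matching factorization $\PB(xv|E_i) = \PB(x|E_i)\cdot\PB(v|E_j)$ for the same $E_j = \delta_h(E_i,x)$. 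Applying the induction hypothesis to $v$ from state $E_j$ gives $\PB(v|E_j) = \PB_{E_j}(v)$, and combining the three relations yields $\PB(xv|E_i) = \PB(x|E_i)\cdot\PB_{E_j}(v) = \PB_{E_i}(xv)$. I would dispatch the degenerate case $\PB(x|E_i)=0$ separately: process fact 4 gives $\PB(xv|\past) \le \PB(x|\past)$, hence $\PB(xv|E_i) = 0 = \PB_{E_i}(xv)$, so both sides again agree.

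The main obstacle is the process-side factorization $\PB(xv|E_i) = \PB(x|E_i)\cdot\PB(v|E_j)$, i.e.\ the assertion that the true conditional probability of a word splits cleanly across the intermediate equivalence class reached after the first symbol. Everything on the machine side is immediate matrix algebra from the definition of the $T^{(x)}$ together with unifilarity, so the nontrivial input is precisely Claim \ref{cl:ConsistencyOfOneStepTransitions}, which encapsulates the well-definedness and probabilistic consistency of the equivalence-class transitions. Once that claim is available, the induction closes routinely.
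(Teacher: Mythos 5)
Your proof is correct and follows essentially the same route as the paper: both arguments rest on the one-step factorization of Claim \ref{cl:ConsistencyOfOneStepTransitions} together with unifilarity of $M_h$, the only difference being that you inline the induction on $|w|$ (handling the $\PB(x|E_i)=0$ case locally at each step), whereas the paper first packages that induction as the chained-transition formula of Claim \ref{cl:ConistencyOfChainedTransitions} and then matches it against Equation (\ref{eq:WordProbsUnifilarHMM}), treating the zero case via the longest positive-probability prefix. One point worth making explicit in your write-up: the inductive step needs $E_j = \delta_h(E_i,x) \in \E^+$ so that the induction hypothesis applies and so that the corresponding column actually exists in $T^{(x)}$; this is supplied by Claim \ref{cl:InternalTransitions} (Equation (\ref{eq:ConsistentTrans})).
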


\begin{proof}

By construction $M_h$ is a unifilar HMM, and its transition function $\delta$,
as defined in Section \ref{subsec:Generator_eMachines}, is the same as the transition
function $\delta_h$ between equivalence classes of histories as defined 
in Equation (\ref{eq:DeltaFunctionHistoryMachineDef}). Moreover, we have by construction
that for each $x \in \XX$ and state $E_i$, $\PB_{E_i}(x) = \PB(x|E_i)$. The lemma follows essentially
from these facts. We consider separately the two cases $\PB(w|E_i) > 0$ and $\PB(w|E_i) = 0$. 

\begin{itemize}

\item Case (i) - $\PB(w|E_i) > 0$. 
Let $w = w_0 \ldots w_{l-1}$ be a word of length $l \geq 1$ with $\PB(w|E_i) > 0$. 
By Claim \ref{cl:ConistencyOfChainedTransitions} in Appendix 
\ref{appsec:Probabilistic_Consistency_of_Equivalence_Class_Transitions}
and the ensuing remark we know that the equivalence classes 
$s_0 = E_i$, $s_1 = \delta_h(s_0,w_0), \ldots , s_l = \delta_h(s_{l-1}, w_{l-1})$ are well defined and: 
\begin{align*}
\PB(w|E_i) = \prod_{t=0}^{l-1} \PB(w_t|s_t) ~.
\end{align*} 
Since $\delta_h \cong \delta$ we see that there is an allowed state path $s$ in
the HMM $M_h$---namely, $s = s_1, \ldots ,s_l$---such that the word $w$ can be
generated following $s$ from the initial state $E_i$. It follows that
$\PB_{E_i}(w) > 0$ and given by Equation (\ref{eq:WordProbsUnifilarHMM}):
\begin{align*}
\PB_{E_i}(w) = \prod_{t=0}^{l-1} \PB_{s_t}(w_t) = \prod_{t=0}^{l-1} \PB(w_t|s_t) ~.
\end{align*}

\item Case (ii) - $\PB(w|E_i) = 0$. 
Let $w = w_0 \ldots w_{l-1}$ be a word of length $l \geq 1$ with $\PB(w|E_i) = 0$. 
For $0 \leq m \leq l-1$, define $w^m = w_0 \ldots w_{m-1}$ ($w^0$ is the null word $\lambda$). 
Take the largest integer $m \in \{0,\ldots,l-1\}$ such that $\PB(w^m|E_i) > 0$. 
By convention we take $\PB(\lambda|E_i) = 1$ for all $i$, 
so there is always some such $m$. A similar analysis to above then shows that 
the equivalence classes $s_0, \ldots , s_m$ defined by $s_0 = E_i$, 
$s_{t+1} = \delta_h(s_t, w_t)$ are well defined and:
\begin{align*}
\PB(w^{m+1}|E_i) = \prod_{t=0}^{m} \PB(w_t|s_t) = \PB_{E_i}(w^{m+1}). 
\end{align*}
By our choice of $m$, $\PB(w^{m+1}|E_i) = 0$, so $\PB_{E_i}(w^{m+1}) = 0$ as well. 
It follows that $\PB_{E_i}(w) = 0$, since $w^{m+1}$ is a prefix of $w$. 

\end{itemize}
\end{proof}

\begin{Lem}
\label{lem:WordProbs} 
For any $w \in \XX^*$, $\P(w) = \norm{ \mu T^{(w)} }_1$. 
\end{Lem}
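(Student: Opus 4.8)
The plan is to recognize Lemma \ref{lem:WordProbs} as an instance of the law of total probability, reducing it to the statement that $\PB(w\mid\cdot)$ integrates to the marginal word probability. First I would unfold the right-hand side. Since every entry of $\mu$ and of $T^{(w)}$ is nonnegative, the $\ell_1$ norm is just the sum of all entries, so
\begin{align*}
\norm{\mu T^{(w)}}_1 = \sum_j \sum_i \mu_i T^{(w)}_{ij} = \sum_{i=1}^N \mu_i \norm{e_i T^{(w)}}_1 = \sum_{i=1}^N \P(E_i)\, \PB_{E_i}(w),
\end{align*}
and then invoke Lemma \ref{lem:ConditionalWordProbs} to replace $\PB_{E_i}(w)$ by $\PB(w\mid E_i)$. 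Thus the lemma is equivalent to the identity $\P(w) = \sum_{i=1}^N \P(E_i)\,\PB(w\mid E_i)$.

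The core of the argument is to establish $\P(w) = \int_{\RR} \PB(w\mid\past)\, d\Pm(\past)$, where $\RR$ is the full-measure set of regular pasts. I would prove this by bounded convergence. For $\Pm$-a.e. $\past$ the conditional word probabilities $\P(w\mid\past^t)$ are defined for every $t$ and satisfy $0 \leq \P(w\mid\past^t) \leq 1$, and by definition $\P(w\mid\past^t) \to \PB(w\mid\past)$ pointwise on $\RR$. Moreover, a direct computation using fact (1) on word probabilities gives, for every fixed $t$,
\begin{align*}
\int_{\RR} \P(w\mid\past^t)\, d\Pm(\past) = \sum_{|v|=t,\ \P(v)>0} \P(v)\,\frac{\P(vw)}{\P(v)} = \sum_{|v|=t} \P(vw) = \P(w),
\end{align*}
the last equality following by iterating $\sum_{x\in\XX}\P(xu) = \P(u)$. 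Passing to the limit via the bounded convergence theorem yields $\int_{\RR}\PB(w\mid\past)\,d\Pm(\past) = \P(w)$. Finally, since the classes $E_1,\dots,E_N \in \E^+$ partition $\RR$ up to a null set and $\PB(w\mid\past)$ is constant and equal to $\PB(w\mid E_i)$ on each $E_i$ (so the integrand is a simple, hence measurable, function by the results of Appendix \ref{appsec:Measurability_of_Equivalence_Classes}), the integral splits as $\sum_i \P(E_i)\,\PB(w\mid E_i)$, completing the chain.

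I expect the only genuine obstacle to be the justification of the interchange of limit and integral together with the bookkeeping that $\int_{\RR} \P(w\mid\past^t)\,d\Pm = \P(w)$ for each fixed $t$; both are elementary but must be stated carefully, in particular noting that trivial pasts form a $\Pm$-null set so that the integrand is well defined a.e., and that terms with $\P(v)=0$ may be dropped since then $\P(vw)=0$ as well. Everything else is formal manipulation once Lemma \ref{lem:ConditionalWordProbs} and the full-measure and partition properties of $\RR$ and $\E^+$ are in hand. This keeps the proof within the elementary, measure-theoretic toolkit used elsewhere in the paper, avoiding any appeal to martingale convergence.
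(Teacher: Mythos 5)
Your proof is correct, but it takes a genuinely different route from the paper's at the key step. The paper deduces the lemma from Claim \ref{cl:ProbabilisticConsistency}, the per-class identity $\P(E_{i,w}) = \P(E_i)\cdot\PB(w|E_i)$ for $E_{i,w} = \{\biinfinity : \past\in E_i,\ \future^{|w|}=w\}$, which is itself proved by identifying $\PB_w$ as a version of the conditional expectation $\Ex\left(\indicator_{A_{w,0}}|\H\right)$ via Theorem \ref{thm:DurrettMartingale}; summing over $i$ and invoking Lemma \ref{lem:ConditionalWordProbs} then gives the result. You instead establish only the aggregated identity $\P(w)=\sum_{i}\P(E_i)\,\PB(w|E_i)$, by computing $\int_{\RR}\P(w|\past^t)\,d\Pm = \P(w)$ exactly for each finite $t$ and passing to the limit by bounded convergence, then splitting the integral over the partition $\{E_i\}$ on which the integrand is constant. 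Both arguments are sound, and your reduction of the right-hand side and use of Lemma \ref{lem:ConditionalWordProbs} match the paper exactly. What your route buys is that, for this lemma, the conditional-expectation machinery of Claim \ref{cl:VersionOfConditionalExpectation} is replaced by an explicit finite computation plus the bounded convergence theorem. Two caveats: (i) the per-class Claim \ref{cl:ProbabilisticConsistency} is strictly stronger than your aggregated identity and is still needed elsewhere (e.g., in Claim \ref{cl:EquivalenceClassStationarity}), so it cannot be dispensed with globally; and (ii) your closing remark about avoiding martingale convergence is slightly overstated, since the a.e.\ existence of the limits defining $\PB(w|\past)$ (Claim \ref{cl:wRegularPastsProb1}), on which your pointwise convergence rests, is itself obtained from the martingale convergence theorem---what you avoid is only the $L^1$-identification step.
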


\begin{proof} 
Let $E_{i,w} \equiv \{\biinfinity: \future^{|w|} = w, \past \in E_i \}$. Claim
\ref{cl:ProbabilisticConsistency} of
Appendix \ref{appsec:Probabilistic_Consistency_of_Equivalence_Class_Transitions}
shows that each $E_{i,w}$ is an $\X$-measurable set with
$\P(E_{i,w}) =  \P(E_i) \cdot \PB(w|E_i)$. Since the $E_i$s are disjoint sets
with probabilities summing to 1, it follows that
$\P(w) = \sum_{i=1}^N \P(E_{i,w})$ for each $w \in \XX^*$. Thus, applying
Lemma \ref{lem:ConditionalWordProbs}, for any $w \in \XX^*$ we have:
\begin{align*}
\P(w) 	& = \sum_{i=1}^N \P(E_{i,w}) \\
		& = \sum_{i=1}^N \P(E_i) \cdot \PB(w|E_i) \\
		& = \sum_{i=1}^N \mu_i \norm{ e_i T^{(w)} }_1 \\
		& = \norm{ \mu T^{(w)} }_1 ~.
\end{align*}
\end{proof}

\begin{proof}(Theorem \ref{thm:HistoryMachinesAreGenerators})
\begin{enumerate}
\item \emph{Unifilarity}: As mentioned above, this is immediate from the history
	\eM\ construction. 
\item \emph{Probabilistically Distinct States}: Take any $i$ and $j$ with
$i \not=j$. By construction of the equivalence classes there exists 
	some word $w \in \XX^*$ such that $\PB(w|E_i) \not= \PB(w|E_j)$. 
	But by Lemma \ref{lem:ConditionalWordProbs},
	$\PB(w|E_i) = \PB_{E_i}(w)$ and $\PB(w|E_j) = \PB_{E_j}(w)$. Hence, 
	$\PB_{E_i}(w) \not= \PB_{E_j}(w)$, so the states 
	$E_i$ and $E_j$ of the HMM $M_h = (\E^+, \XX, \{ T^{(x)} \})$ are
	probabilistically distinct. Since this holds for all $i \not= j$, 
	$M_h$ has probabilistically distinct states. 
\item \emph{Strongly Connected Graph}: By Lemma
	\ref{lem:DisjointStronglyConnectedComponents}, we know the graph $G$
	associated with the HMM $M_h$ consists of one or more connected components
	$C_1, \ldots , C_n$, each of which is strongly connected. Assume that there
	is more than one of these strongly connected components: $n \geq 2$. By
	Points 1 and 2 above we know that each component $C_k$ defines a generator
	\eM. If two of these components $C_k$ and $C_j$ were isomorphic via a
	function $f: C_k$ states $\rightarrow C_j$ states, then for states
	$E_i \in C_k$ and $E_l \in C_j$ with $f(E_i) = E_l$, we would have 
	$\PB_{E_i}(w) = \PB_{E_l}(w)$ for all $w \in \XX^*$. By Lemma
	\ref{lem:ConditionalWordProbs}, however, this implies
	$\PB(w|E_i) = \PB(w|E_l)$ for all $w \in \XX^*$ as well, which contradicts
	the fact that $E_i$ and $E_l$ are distinct equivalence classes. Hence,
	no two of the components $C_k, k = 1, \ldots , n$, can be isomorphic.
	By Corollary \ref{cor:GeneratorsAreUnique}, this implies that the stationary
	processes $\Process^k, k = 1, \ldots , n$, generated by each of the
	generator \eM\ components are all distinct. But, by a block diagonalization
	argument, it follows from Lemma \ref{lem:WordProbs} that 
	$\Process = \sum_{k=1}^n \mu^k \cdot \Process^k$, where
	$\mu^k = \sum_{\{i: E_i \in C_k\}} \mu_i$. 
	That is, for any word $w \in \XX^*$, we have:
\begin{align*}
\P(w)		& = \sum_{k=1}^n \mu^k \cdot \P^k(w) \\
     		& = \sum_{k=1}^n \mu^k \cdot \norm{\rho^k T^{k,(w)}}_1 ~,
\end{align*}
	where $\rho^k$ and $T^{k,(w)}$ are, respectively, the stationary state
	distribution and $w$-transition matrix for the generator \eM\ of component
	$C_k$. Since the $\Process^k$s are all distinct, this implies that the
	process $\Process$ cannot be ergodic, which is a contradiction. Hence,
	there can only be one strongly connected component $C_1$---the whole
	graph is strongly connected. 	
\item \emph{Equivalence of $\Process$ and $\Process'$}: Since the graph of
	the HMM $M_h = (\E^+, \XX, \{ T^{(x)} \})$ is strongly connected there is a
	unique stationary distribution $\pi$ over the states satisfying
	$\pi = \pi T$. However, we already know the distribution $\mu$ is stationary.
	Hence, $\pi = \mu$. By definition, the word probabilities $\P'(w)$ for
	the process $\Process'$ generated by this HMM are
	$\P'(w) = \norm{ \pi T^{(w)} }_1, w \in \XX^*$. 
	But, by Lemma \ref{lem:WordProbs}, we have also
	$\P(w) = \norm{ \mu T^{(w)} }_1 = \norm{ \pi T^{(w)} }_1$
	for each $w \in \XX^*$. Hence, $\P(w) = \P'(w)$ for all $w \in \XX^*$,
	so $\Process$ and $\Process'$ are the same process. 
\end{enumerate}
\end{proof}

\section{Conclusion}
\label{sec:Conclusion}

We have demonstrated the equivalence of finite-state history and generator \eMs.
This is not a new idea. However, a formal treatment was absent until quite
recently. While the rigorous development of \eMs\ in \cite{Lohr10a} also 
implies equivalence, the proofs given here, especially for Theorem
\ref{thm:GeneratorsAreHistoryMachines}, are more direct
and provide improved intuition.

The key step in proving the equivalence, at least the new approach used for 
Theorem \ref{thm:GeneratorsAreHistoryMachines}, comes directly from recent
bounds on synchronization rates for finite-state generator \eMs. To generalize
the equivalence to larger model classes, such as machines with a countably
infinite number of states, it therefore seems reasonable that one should
first deduce and apply similar synchronization results for countable-state
generators. Unfortunately, for countable-state generators synchronization can
be much more difficult and exponential decay rates as in Lemma
\ref{lem:GeneratorConvergence} no longer always hold. Thus, it is unclear
whether equivalence in the countable-state case always holds either. Though, 
the results in \cite{Lohr10a} do indicate equivalence holds for countable-state
machines if the entropy in the stationary distribution $H[\pi]$ is finite, which it often is. 

\section*{Acknowledgments}
This work was partially support by ARO award W911NF-12-1-0234-0.
NT was partially supported by an NSF VIGRE fellowship.

\appendix

\section{Regular Pasts and Trivial Pasts} 
\label{appsec:Regular_Pasts_and_Trivial_Pasts}

We establish that the set of trivial pasts $\TT$ is a null set and the set
of regular pasts $\RR$ has full measure. Throughout this section
$\Process = (\XXZ, \X , \P)$ is a stationary, ergodic process over a finite
alphabet $\XX$, and $(\XXm, \Xm , \Pm)$ is the corresponding probability
space over past sequences $\past$. Other notation is used as in Section
\ref{sec:Definitions}.  

\begin{Cla}
\label{cl:TrivialPasts} 
$\Pm$ a.e. $\past$ is nontrivial. That is, $\TT$ is an $\Xm$ measurable set with $\Pm(\TT) = 0$. 
\end{Cla}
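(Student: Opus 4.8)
The plan is to write the set of trivial pasts as a countable union of measurable null sets and then conclude by countable subadditivity. First I would stratify $\TT$ by the length at which triviality is first detected: for each $t \geq 1$, set $\TT_t = \{\past \in \XXm : \P(\past^t) = 0\}$, where $\past^t = \ms_{-t} \ldots \ms_{-1}$ is the terminal length-$t$ block of $\past$. Directly from the definition of a trivial past (namely, that $\P(\past^t) = 0$ for \emph{some} finite $t$), we have $\TT = \bigcup_{t=1}^{\infty} \TT_t$.

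The key step is to identify each $\TT_t$ explicitly as a \emph{finite} union of generating cylinders. Because the alphabet $\XX$ is finite, there are only finitely many words of length $t$, and a past $\past$ lies in $\TT_t$ precisely when its terminal block $\past^t$ equals one of the length-$t$ words $w$ with $\P(w) = 0$. Hence $\TT_t = \bigcup_{\{w : |w| = t,\, \P(w) = 0\}} A_w^-$, a finite union of sets in $\Xm$, so $\TT_t \in \Xm$. Using $\Pm(A_w^-) = \P(w)$ together with finite additivity then gives $\Pm(\TT_t) = \sum_{\{w : |w| = t,\, \P(w) = 0\}} \P(w) = 0$. Consequently $\TT = \bigcup_{t \geq 1} \TT_t$ is a countable union of $\Xm$-measurable null sets, so $\TT \in \Xm$ and $\Pm(\TT) \leq \sum_{t=1}^{\infty} \Pm(\TT_t) = 0$. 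One may also note that the $\TT_t$ are nested increasingly---since $\P(\past^{t-1}) \geq \P(\past^t)$ by the stationarity fact $\P(w) \geq \P(vw)$ applied with $v = \ms_{-t}$ and $w = \past^{t-1}$---so that $\Pm(\TT) = \lim_{t \to \infty} \Pm(\TT_t) = 0$ follows equally directly from continuity from below.

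The main obstacle, such as it is, lies entirely in the measurability bookkeeping rather than in any analytic content. It is rendered routine by the finiteness of $\XX$: this makes each level $\TT_t$ a finite---rather than merely countable---union of the generating cylinders $A_w^-$, so $\Xm$-measurability of each $\TT_t$ is immediate and no limiting argument is needed within a single level. The only genuine appeal to a limit is the outer countable union over $t$, handled cleanly by subadditivity.
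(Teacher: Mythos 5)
Your proof is correct and follows essentially the same route as the paper's: decompose $\TT = \bigcup_{t=1}^{\infty} \TT_t$ with $\TT_t = \{\past : \P(\past^t) = 0\}$, observe each $\TT_t$ is $\Xm_t$-measurable and null, and conclude by countable subadditivity. The only difference is that you spell out why $\TT_t$ is measurable and null (a finite union of zero-probability cylinders $A_w^-$), which the paper leaves implicit in the phrase ``$\Xm_t$ measurable.''
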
 

\begin{proof}
For any fixed $t$, $\TT_t \equiv \{ \past : \P(\past^t) = 0\}$ is $\Xm$ measurable, since it is $\X_t^-$ measurable, and
$\Pm(\TT_t) = 0$. Hence, $\TT = \bigcup_{t=1}^{\infty} \TT_t^-$ is also $\Xm$ measurable with $\Pm(\TT) = 0$. 
\end{proof} 

\begin{Cla}
\label{cl:wRegularPastsProb1}
For any $w \in \XX^*$, $\Pm$ a.e. $\past$ is $w$-regular. That is:
\begin{align*}
\RR_w \equiv \{ \past: \P(\past^t) > 0,
	\mbox{ for all } t \mbox{ and }
	\lim_{t \to \infty} \P(w|\past^t) \mbox{ exists} \}
\end{align*}
is an $\Xm$ measurable set with $\Pm(\RR_w) = 1$.
\end{Cla}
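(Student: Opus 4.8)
The plan is to recognize the conditional word probabilities $\P(w|\past^t)$, indexed by the truncation length $t$, as a bounded martingale on the past space $(\XXm,\Xm,\Pm)$ and then invoke the martingale convergence theorem. Fix $w \in \XX^*$. The $\sigma$-algebras $\Xm_t$ generated by the last $t$ symbols form an increasing filtration with $\Xm = \sigma\bigl(\bigcup_t \Xm_t\bigr)$, and for a nontrivial past the quantity $M_t(\past) \equiv \P(w|\past^t)$ is an $\Xm_t$-measurable random variable taking values in $[0,1]$. Since a bounded martingale converges almost surely, the only real work is to verify the martingale property $\Ex[M_{t+1}\mid\Xm_t] = M_t$; once this is in hand the existence of $\lim_{t\to\infty}\P(w|\past^t)$ for $\Pm$-a.e.\ nontrivial $\past$ is immediate.

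The key step is the one-step computation, carried out on each positive-probability cylinder $\{\past^t = v\}$ (null cylinders being irrelevant). Refining $\past^t$ to $\past^{t+1}$ amounts to prepending one symbol $\ms$, which occurs with conditional probability $\P(\ms v)/\P(v)$ by stationarity and \eqnref{eq:conditional_word_prob}, so that
\[
\Ex[M_{t+1}\mid\Xm_t](\past)
 = \sum_{\ms \in \XX} \frac{\P(\ms\past^t)}{\P(\past^t)}\cdot\frac{\P(\ms\past^t w)}{\P(\ms\past^t)}
 = \frac{\sum_{\ms \in \XX}\P(\ms\past^t w)}{\P(\past^t)}
 = \frac{\P(\past^t w)}{\P(\past^t)}
 = M_t(\past) ,
\]
where the collapse of the sum is exactly the elementary identity $\sum_{\ms\in\XX}\P(\ms v)=\P(v)$ from the list of facts in Section \ref{subsec:Processes}, applied with $v = \past^t w$. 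Note that this uses only stationarity, never ergodicity, consistent with the paper's remark that ergodicity is not needed for the history-\eM\ construction. (Equivalently, one may lift to the bi-infinite space $(\XXZ,\X,\P)$, set $M_t = \Ex[\indicator_{A_{w,0}}\mid\sigma(\MS_{-t},\dots,\MS_{-1})]$, and appeal directly to L\'evy's upward convergence theorem, which makes the martingale property automatic.)

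To finish, I would address measurability and the null set of trivial pasts. Each $M_t$ is $\Xm$-measurable, so $\liminf_t M_t$ and $\limsup_t M_t$ are $\Xm$-measurable and the set on which they coincide---the set where the limit exists---is $\Xm$-measurable; intersecting with the complement of $\TT$, which is $\Xm$-measurable by Claim \ref{cl:TrivialPasts}, shows $\RR_w$ is $\Xm$-measurable. Since $\Pm(\TT)=0$ by Claim \ref{cl:TrivialPasts}, the almost-sure convergence just established gives $\Pm(\RR_w)=1$. The main obstacle is purely technical: conditional word probabilities are undefined on trivial pasts, so one must take care that the martingale machinery is applied only off the null set $\TT$ and that the a.e.-defined limit is matched with the claim's requirement that $\lim_{t\to\infty}\P(w|\past^t)$ exist as an honest limit of the well-defined numbers $\P(w|\past^t)$ at each individual nontrivial past. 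Because $\TT$ is already known to be null, this bookkeeping causes no genuine difficulty, and the entire substance of the claim reduces to the martingale identity above.
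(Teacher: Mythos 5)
Your proposal is correct and follows essentially the same route as the paper: the paper's proof also defines $Y_{w,t}(\past) = \P(w|\past^t)$ (set to $0$ on null cylinders), asserts that $(Y_{w,t})$ is a bounded martingale with respect to the filtration $(\Xm_t)$, applies the Martingale Convergence Theorem, and then intersects the convergence set with $\TT^c$ using Claim \ref{cl:TrivialPasts}. The only difference is that you explicitly verify the martingale identity, which the paper leaves unstated; your computation is the right one, modulo the trivial remark that terms with $\P(\ms\past^t)=0$ should simply be dropped from the sum since then $\P(\ms\past^t w)=0$ as well.
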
 

\begin{proof}
Fix $w \in \XX^*$. Let $Y_{w,t} : \XX^- \rightarrow \R$ be defined by:
\begin{align*}
Y_{w,t}(\past)
  & = \left\{ \begin{array}{ll}
  	\P(w|\past^t) & \mbox{if } \P(\past^t) > 0, \\
  	0 & \mbox{otherwise.}
  \end{array} \right.
\end{align*} 
Then, the sequence $(Y_{w,t})$ is a martingale with respect to the filtration
$(\X_t^-)$ and $\Ex(Y_{w,t}) \leq 1$ for all $t$. Hence, by the Martingale
Converge Theorem $Y_{w,t} \stackrel{a.s.}{\longrightarrow} Y_w$ for some
$\Xm$ measurable random variable $Y_w$. In particular,
$\lim_{t \to \infty} Y_{w,t}(\past)$ exists for $\Pm$ a.e. $\past$. 

Let $\widehat{\RR}_{w} \equiv \{\past: \lim_{t \to \infty} Y_{w,t}(\past) \mbox{ exists} \}$.
Then, as just shown, $\widehat{\RR}_{w}$ is $\Xm$ measurable with $\Pm(\widehat{\RR}_{w}) = 1$, 
and from Claim \ref{cl:TrivialPasts}, we know $\TT$ is $\Xm$ measurable with $\Pm(\TT) = 0$. 
Hence, $\RR_w = \widehat{\RR}_{w} \cap \TT^c$ is also $\Xm$ measurable with $\Pm(\RR_w) = 1$. 
\end{proof}

\begin{Cla}
\label{cl:RegularPastsProb1}
$\Pm$ a.e. $\past$ is regular. That is, $\RR$ is an $\Xm$ measurable set with $\Pm(\RR)$ = 1. 
\end{Cla}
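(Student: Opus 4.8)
The plan is to recognize that a past is \emph{regular} precisely when it is $w$-regular simultaneously for every $w \in \XX^*$, and therefore to write $\RR$ as the intersection
\begin{align*}
\RR = \bigcap_{w \in \XX^*} \RR_w ~.
\end{align*}
Since $\XX$ is finite, the set $\XX^*$ of all finite words is countable, so this is a \emph{countable} intersection. This is the only structural observation the argument needs, and it is what makes the claim immediate rather than delicate.

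First I would invoke Claim \ref{cl:wRegularPastsProb1}, which supplies, for each individual $w \in \XX^*$, that $\RR_w$ is $\Xm$ measurable with $\Pm(\RR_w) = 1$. A countable intersection of $\Xm$ measurable sets is again $\Xm$ measurable, so measurability of $\RR$ follows at once from the display above.

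To get full measure, I would pass to complements and apply countable subadditivity:
\begin{align*}
\Pm(\RR^c) = \Pm\Bigl( \bigcup_{w \in \XX^*} \RR_w^c \Bigr)
  \leq \sum_{w \in \XX^*} \Pm(\RR_w^c) = 0 ~,
\end{align*}
where each term vanishes because $\Pm(\RR_w) = 1$, and the sum is over a countable index set. Hence $\Pm(\RR) = 1$.

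There is no real obstacle here; the content is entirely carried by the earlier single-word result (Claim \ref{cl:wRegularPastsProb1}) together with the finiteness of the alphabet. The one point worth stating explicitly, lest the reader worry, is the countability of $\XX^*$, since an \emph{uncountable} intersection of full-measure sets could fail to be measurable or could have positive-measure complement. Everything else is a routine application of the standard measure-theoretic facts that countable intersections preserve measurability and that a countable union of null sets is null.
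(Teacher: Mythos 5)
Your proof is correct and follows essentially the same route as the paper's: write $\RR = \bigcap_{w \in \XX^*} \RR_w$, invoke Claim \ref{cl:wRegularPastsProb1} for each $w$, and use countability of $\XX^*$ to conclude both measurability and full measure. The only difference is that you spell out the complement/subadditivity step, which the paper leaves implicit.
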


\begin{proof}
$\RR = \bigcap_{w \in \XX^*} \RR_w $.
By Claim \ref{cl:wRegularPastsProb1}, each $\RR_w$ is $\Xm$ measurable with
$\Pm(\RR_w) = 1$. Since there are only countably many finite length words
$w \in \XX^*$, it follows that $\RR$ is also $\Pm$ measurable with
$\Pm(\RR)$ = 1.
\end{proof}

\section{Well Definedness of Equivalence Class Transitions}
\label{appsec:Well_Definedness_of_Equivalence_Class_Transitions}

We establish that the equivalence-class-to-equivalence-class transitions are
well defined and normalized for the equivalence classes $ E_{\beta} \in \E$.
Throughout this section $\Process = (\XXZ, \X , \P)$ is a stationary, ergodic
process over a finite alphabet $\XX$ and $(\XXm, \Xm , \Pm)$ is the
corresponding probability space over past sequences $\past$. Other notation
is used as in Section \ref{sec:Definitions}. Recall that, by definition, for any
regular past $\past$, $\P(\past^t) > 0$ for each $t \in \N$. This fact is
used implicitly in the proofs of the following claims several times to ensure
that various quantities are well defined.  

\begin{Cla}
\label{cl:AddWord_PositiveProbAtFiniteLengths}
For any regular past $\past \in \XXm$ and word $w \in \XX^*$ with
$\PB(w|\past) > 0:$
\begin{enumerate}
\item[(i)] $\P(\past^t w) > 0$ for each $t \in \N$ and
\item[(ii)] $\P(w|\past^t) > 0$ for each $t \in \N$. 
\end{enumerate}
\end{Cla}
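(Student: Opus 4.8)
The plan is to reduce the two parts to a single positivity statement and then prove it by a monotonicity argument. First I would note that regularity of $\past$ implies nontriviality, so $\P(\past^t) > 0$ for every $t \in \N$. Hence $\P(w|\past^t) = \P(\past^t w)/\P(\past^t)$ is well defined for each $t$ and is positive if and only if $\P(\past^t w)$ is positive. This shows that (i) and (ii) are equivalent, so it suffices to establish (i), namely that $\P(\past^t w) > 0$ for all $t \in \N$.

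The key observation is that prepending symbols can only decrease a word probability. Concretely, for $t \geq t_0$ the block $\past^t$ is obtained from $\past^{t_0}$ by prepending the symbols $\ms_{-t} \ldots \ms_{-t_0-1}$, so writing $u$ for this prefix we have $\past^t w = u\,\past^{t_0} w$, and the basic monotonicity of word probabilities ($\P(v) \geq \P(u'v)$) gives
\begin{align*}
\P(\past^t w) = \P(u\,\past^{t_0} w) \leq \P(\past^{t_0} w).
\end{align*}
Thus the sequence $\big(\P(\past^t w)\big)_t$ is non-increasing; in particular, if it vanishes at some finite $t_0$ it vanishes for every $t \geq t_0$.

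I would then finish by contradiction. Suppose $\P(\past^{t_0} w) = 0$ for some finite $t_0$. By the preceding paragraph $\P(\past^t w) = 0$, and hence $\P(w|\past^t) = 0$, for all $t \geq t_0$. Passing to the limit forces $\PB(w|\past) = \lim_{t \to \infty} \P(w|\past^t) = 0$, contradicting the hypothesis $\PB(w|\past) > 0$. The main subtlety to keep in mind is precisely that the hypothesis only controls the limit and therefore guarantees positivity of $\P(w|\past^t)$ only for large $t$; the real work is to rule out vanishing at small $t$, which is exactly what the monotonicity-of-zeros argument accomplishes, by propagating any finite-$t$ vanishing forward into the limit.
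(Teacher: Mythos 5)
Your proposal is correct and follows essentially the same route as the paper: both arguments observe that $\P(\past^t w) = 0$ at some finite $t_0$ propagates forward (by monotonicity of word probabilities under prepending) to force $\P(w|\past^n) = 0$ for all $n \geq t_0$ and hence $\PB(w|\past) = 0$, a contradiction, with (ii) then following from (i) since $\P(\past^t) > 0$ by nontriviality. Your write-up merely makes the monotonicity step and the equivalence of (i) and (ii) slightly more explicit than the paper does.
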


\begin{proof}
Fix any regular past $\past \in \XXm$ and word
$w \in \XX^*$ with $\PB(w|\past) > 0$. 
Assume there exists $t \in \N$ such that $\P(\past^t w) = 0$. Then
$\P(\past^n w) = 0$ for all $n \geq t$ 
and, thus, $\P(w|\past^n) = \P(\past^n w)/\P(\past^n) = 0$ for all $n \geq t$ as well. 
Taking the limit gives $\PB(w|\past) = \lim_{n \to \infty} \P(w|\past^n) = 0$, which is a
contradiction. Hence, we must have $\P(\past^t w) > 0$ for each $t$, proving
(i). (ii) follows since $\P(w|\past^t) = \P(\past^t w)/\P(\past^t)$ is greater than
zero as long as $\P(\past^t w) > 0$.
\end{proof}

\begin{Cla}
\label{cl:AddSymbol_ToRegularPast_IsRegular}
For any regular past $\past \in \XXm$ and any symbol $\ms \in \XX$
with $\PB(\ms|\past) > 0$, the past $\past \ms$ is regular. 
\end{Cla}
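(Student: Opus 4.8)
The plan is to verify the two defining requirements of regularity for $\past\ms$ directly: that $\past\ms$ is nontrivial, and that $\lim_{t\to\infty}\P(w|(\past\ms)^t)$ exists for every $w \in \XX^*$. The one structural observation that drives everything is the bookkeeping identity $(\past\ms)^t = \past^{t-1}\ms$: the last $t$ symbols of $\past\ms$ are the last $t-1$ symbols of $\past$ followed by $\ms$. This lets me reexpress every conditional probability in the sequence in terms of quantities conditioned on suffixes of $\past$ alone, where I already have convergence from the regularity of $\past$.

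First I would establish nontriviality. Applying Claim \ref{cl:AddWord_PositiveProbAtFiniteLengths}(i) with the word $\ms$ (legitimate since $\PB(\ms|\past) > 0$) gives $\P(\past^n\ms) > 0$ for every $n \in \N$; combined with the standing assumption that $\P(\ms) > 0$, this yields $\P((\past\ms)^t) = \P(\past^{t-1}\ms) > 0$ for all $t$. Hence $\past\ms$ is nontrivial and each conditional $\P(w|(\past\ms)^t)$ is well defined.

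For regularity, the key step is the chain rule (Fact 5) applied with $u = \past^{t-1}$, $v = \ms$, and the word $w$. For $t \geq 2$ its hypotheses $\P(\past^{t-1}) > 0$ and $\P(\past^{t-1}\ms) > 0$ hold by the previous paragraph, and rearranging $\P(\ms w|\past^{t-1}) = \P(\ms|\past^{t-1}) \cdot \P(w|\past^{t-1}\ms)$ gives
\[
\P(w \mid \past^{t-1}\ms) = \frac{\P(\ms w \mid \past^{t-1})}{\P(\ms \mid \past^{t-1})} ~.
\]
Now I take $t \to \infty$; since $t-1 \to \infty$ as well, and $\past$ is regular (hence both $(\ms w)$-regular and $\ms$-regular), the numerator tends to $\PB(\ms w|\past)$ and the denominator to $\PB(\ms|\past)$. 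Because the denominator limit is $\PB(\ms|\past) > 0$ by hypothesis, the quotient converges, so
\[
\lim_{t\to\infty}\P(w\mid(\past\ms)^t) = \frac{\PB(\ms w|\past)}{\PB(\ms|\past)} ~.
\]
(The finitely many initial terms, including the degenerate $t=1$ case, do not affect the limit.) Thus $\past\ms$ is $w$-regular, and since $w$ was arbitrary, $\past\ms$ is regular.

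I expect no serious obstacle. The only points requiring genuine care are the index shift $(\past\ms)^t = \past^{t-1}\ms$ and confirming the denominator stays bounded away from $0$ in the limit, which is immediate since it converges to the positive constant $\PB(\ms|\past)$. All the substantive work is carried by the earlier Claim \ref{cl:AddWord_PositiveProbAtFiniteLengths} and the elementary conditional-probability facts, so the argument should be short.
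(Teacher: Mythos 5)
Your proof is correct and follows essentially the same route as the paper's: express $\P(w|(\past\ms)^t)$ as the quotient $\P(\ms w|\past^{t-1})/\P(\ms|\past^{t-1})$, invoke Claim \ref{cl:AddWord_PositiveProbAtFiniteLengths} for nontriviality and positivity of the denominator, and pass to the limit using the $(\ms w)$- and $\ms$-regularity of $\past$ together with $\PB(\ms|\past)>0$. Your handling of the index shift $(\past\ms)^t = \past^{t-1}\ms$ is in fact slightly more careful than the paper's, which writes $\past^t\ms$ throughout; this is harmless since the shift does not affect the limit.
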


\begin{proof}
Fix any regular past $\past \in \XXm$ and symbol $\ms \in \XX$ with
$\PB(\ms|\past) > 0$. By Claim \ref{cl:AddWord_PositiveProbAtFiniteLengths},
$\P(\past^t \ms)$ and $\P(\ms|\past^t)$ are both nonzero for each $t \in \N$.
Thus, the past $\past \ms$ is nontrivial, and the conditional probability
$\P(w|\past^t \ms)$ is well defined for each $w \in \XX^*, t \in \N$ and
given by:
\begin{align*}
\P(w|\past^t \ms) = \frac{\P(\ms w|\past^t)}{\P(\ms|\past^t)} ~. 
\end{align*}
Taking the limit gives:
\begin{align*}
\lim_{t \to \infty} \P(w|(\past \ms)^t) 
& = \lim_{t \to \infty} \P(w|\past^t \ms) \\
& = \lim_{t \to \infty} \frac{\P(\ms w|\past^t)}{\P(\ms |\past^t)} \\
& = \frac{\lim_{t \to \infty} \P(\ms w|\past^t)}{\lim_{t \to
\infty} \P(\ms|\past^t)} \\
& = \frac{\PB(\ms w|\past)}{\PB(\ms |\past)} ~.
\end{align*}
In particular, $\lim_{t \to \infty} \P(w|(\past \ms)^t) = \PB(\ms
w|\past)/\PB(\ms |\past)$ exists. 
Since this holds for all $w \in \XX^*$, the past $\past \ms$ is regular. 

\end{proof}

\begin{Cla}
\label{cl:UnifilarityOfEquivalenceClassTransitions} 
If $\past$ and $\past'$ are two regular pasts in the same equivalence class
$E_{\beta} \in \E$ then, for any symbol $\ms \in \XX$ with
$\PB(\ms |E_{\beta}) > 0$, the regular pasts $\past \ms$ and
$\past' \ms$ are also
in the same equivalence class. 
\end{Cla}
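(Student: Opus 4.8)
The plan is to reduce the claim entirely to the conditional-probability formula already extracted in the proof of Claim \ref{cl:AddSymbol_ToRegularPast_IsRegular}, combined with the defining property of the equivalence relation $\sim$. First I would note that since $\past$ and $\pastprime$ lie in the common class $E_\beta$ and $\PB(\ms|E_\beta) > 0$, both $\PB(\ms|\past) > 0$ and $\PB(\ms|\pastprime) > 0$. Hence Claim \ref{cl:AddSymbol_ToRegularPast_IsRegular} applies to each, guaranteeing that $\past \ms$ and $\pastprime \ms$ are themselves regular; in particular $\PB(w|\past \ms)$ and $\PB(w|\pastprime \ms)$ are well defined for every $w \in \XX^*$, so that the statement ``$\past \ms$ and $\pastprime \ms$ lie in the same class'' is even meaningful.

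Next I would invoke the explicit limit computed inside that same proof, namely that for a regular past and a symbol of positive conditional probability,
\begin{align*}
\PB(w|\past \ms) = \frac{\PB(\ms w|\past)}{\PB(\ms|\past)} ~,
\end{align*}
together with the identical identity for $\pastprime$. The crucial step is then to feed in the hypothesis $\past \sim \pastprime$, which by definition means $\PB(v|\past) = \PB(v|\pastprime)$ for \emph{all} words $v \in \XX^*$. Applying this equality with the two particular choices $v = \ms w$ and $v = \ms$ yields
\begin{align*}
\PB(w|\past \ms)
  = \frac{\PB(\ms w|\past)}{\PB(\ms|\past)}
  = \frac{\PB(\ms w|\pastprime)}{\PB(\ms|\pastprime)}
  = \PB(w|\pastprime \ms) ~.
\end{align*}
Since $w \in \XX^*$ was arbitrary, the predictions of $\past \ms$ and $\pastprime \ms$ agree on all future words, so $\past \ms \sim \pastprime \ms$ and the two appended pasts belong to the same equivalence class.

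I do not expect any substantial obstacle: the analytic content—evaluating the limit of the finite-length conditional word probabilities after prepending a symbol to the conditioning block—was already done in Claim \ref{cl:AddSymbol_ToRegularPast_IsRegular}, and here it is merely quoted. The only point requiring mild care is confirming that the common denominator is strictly positive so the quotient identity is valid; this is precisely what the hypothesis $\PB(\ms|E_\beta) > 0$ supplies, and since $\PB(\ms|E_\beta)$ is by construction independent of the representative drawn from $E_\beta$, the same positive value serves for both $\past$ and $\pastprime$. Thus the argument is a short, direct manipulation rather than a delicate limiting estimate.
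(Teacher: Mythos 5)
Your proof is correct and follows essentially the same route as the paper's: both apply Claim \ref{cl:AddSymbol_ToRegularPast_IsRegular} to get regularity of $\past\ms$ and $\past'\ms$, then reuse the quotient identity $\PB(w|\past\ms) = \PB(\ms w|\past)/\PB(\ms|\past)$ from that proof and observe the two quotients coincide because $\past$ and $\past'$ induce the same values on all words (the paper just writes the common value as $\PB(\ms w|E_{\beta})/\PB(\ms|E_{\beta})$ directly).
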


\begin{proof}
Let $E_{\beta} \in \E$ and fix any $\past, \past' \in E_{\beta}$ and
$\ms \in \XX$ with $\PB(\ms |E_{\beta}) = \PB(\ms |\past) =
\PB(\ms|\past') > 0$.
By Claim \ref{cl:AddSymbol_ToRegularPast_IsRegular}, $\past \ms$
and $\past' \ms$
are both regular. And, just as in the proof of Claim
\ref{cl:AddSymbol_ToRegularPast_IsRegular}, for any $w \in \XX^*$ we have:
\begin{align*}
\PB(w|\past \ms) = \lim_{t \to \infty} \P(w|(\past \ms)^t) =
\frac{\PB(\ms w|\past)}{\PB(\ms|\past)} =
\frac{\PB(\ms w|E_{\beta})}{\PB(\ms |E_{\beta})} ~.
\end{align*} 
Also, similarly, for any $w \in \XX^*$:
\begin{align*}
\PB(w|\past' \ms) = \lim_{t \to \infty} \P(w|(\past' \ms)^t) =
\frac{\PB(\ms w|\past')}{\PB(\ms |\past')} =
\frac{\PB(\ms w|E_{\beta})}{\PB(\ms |E_{\beta})} ~.
\end{align*} 
Since this holds for all $w \in \XX^*$, it follows that $\past
\ms$ and $\past' \ms$ are both in the same equivalence class. 
\end{proof}

\begin{Cla}
\label{cl:Normalization}
For any equivalence class $E_{\beta}$, $\sum_{\ms \in \XX} \PB(\ms |E_{\beta}) = 1$. 
\end{Cla}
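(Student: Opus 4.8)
The plan is to reduce the claim to the finite-length normalization fact (item~3 in the list of stationary-process identities in Section~\ref{subsec:Processes}) together with the definition of regularity, using finiteness of $\XX$ to pass the limit through the sum. The only content here is that regularity guarantees both the positivity needed to invoke the finite-length normalization and the existence of the individual limits, while finiteness of the alphabet legitimizes interchanging the limit with the sum.

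Concretely, I would first fix any representative regular past $\past \in E_{\beta}$, so that by definition $\PB(\ms|E_{\beta}) = \PB(\ms|\past) = \lim_{t \to \infty} \P(\ms|\past^t)$ for each $\ms \in \XX$. Since $\past$ is regular, we have $\P(\past^t) > 0$ for every $t \in \N$, so fact~3 of Section~\ref{subsec:Processes} applies at each finite length and yields
\begin{align*}
\sum_{\ms \in \XX} \P(\ms|\past^t) = 1 \quad \mbox{ for every } t \in \N ~.
\end{align*}

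Next, regularity of $\past$ ensures that each of the finitely many limits $\lim_{t \to \infty} \P(\ms|\past^t)$ exists. Because $\XX$ is finite, the limit of the sum equals the sum of the limits, so
\begin{align*}
\sum_{\ms \in \XX} \PB(\ms|E_{\beta})
  = \sum_{\ms \in \XX} \lim_{t \to \infty} \P(\ms|\past^t)
  = \lim_{t \to \infty} \sum_{\ms \in \XX} \P(\ms|\past^t)
  = \lim_{t \to \infty} 1
  = 1 ~,
\end{align*}
which is the desired normalization.

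There is no genuine obstacle in this argument; the proof is essentially immediate once the right hypotheses are identified. The one point that must not be glossed over is that the interchange of the limit and the sum is justified \emph{only} because $\XX$ is finite---this is the single place where the finite-alphabet assumption is actually used in the normalization claim, and for a countable alphabet one would instead need a dominated-convergence or monotonicity argument to conclude. Otherwise the result follows directly from the definition of $\PB(\ms|E_{\beta})$ and the finite-length normalization already recorded for stationary processes.
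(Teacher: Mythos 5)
Your proof is correct and is essentially identical to the paper's: fix a representative regular past, apply the finite-length normalization $\sum_{\ms}\P(\ms|\past^t)=1$, and interchange the limit with the finite sum. The only difference is that you make explicit the justification for the interchange (finiteness of $\XX$) and the role of regularity in guaranteeing positivity of $\P(\past^t)$, both of which the paper leaves implicit.
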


\begin{proof}
Fix $\past \in E_{\beta}$. Then: 
\begin{align*}
\sum_{x \in \XX} \PB(x|E_{\beta}) 
& = \sum_{x \in \XX} \PB(x|\past) \\
& = \sum_{x \in \XX} \lim_{t \to \infty} \P(x|\past^t) \\
& =   \lim_{t \to \infty} \sum_{x \in \XX} \P(x|\past^t) \\
& = \lim_{t \to \infty} 1 \\
& = 1.
\end{align*}
\end{proof}

\section{Measurability of Equivalence Classes}
\label{appsec:Measurability_of_Equivalence_Classes}

We establish that the equivalence classes $E_{\beta}, \beta \in B$, are
measurable sets. Throughout this section $\Process = (\XXZ, \X , \P)$ is
a stationary, ergodic process over a finite alphabet $\XX$ and
$(\XXm, \Xm , \Pm)$ is the corresponding probability space over past
sequences $\past$. Other notation is used as in Section \ref{sec:Definitions}.  

\begin{Cla}
\label{cl:MeasurabilityOfAwp} 
Let $\AA_{w,p} \equiv \{\past: \P(\past^t) > 0, \mbox{ for all } t \mbox{~and~} \lim_{t \to \infty} \P(w|\past^t) = p\}$. 
Then $\AA_{w,p}$ is $\Xm$ measurable for each $w \in \XX^*$ and $p \in [0,1]$. 
\end{Cla}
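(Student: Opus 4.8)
The plan is to realize $\AA_{w,p}$ as a countable Boolean combination of sets that are each visibly $\Xm$ measurable. Its two defining requirements---that $\past$ be nontrivial and that $\P(w|\past^t)$ converge to the particular value $p$---will be treated separately and then intersected.

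First I would reintroduce the functions $Y_{w,t}: \XXm \to \R$ from the proof of Claim \ref{cl:wRegularPastsProb1}, namely $Y_{w,t}(\past) = \P(w|\past^t)$ when $\P(\past^t) > 0$ and $Y_{w,t}(\past) = 0$ otherwise. The key point is that $Y_{w,t}$ depends on $\past$ only through its last $t$ symbols $\past^t$: it is constant on each cylinder $A_u^-$ with $|u| = t$ and therefore takes only finitely many values. Hence $Y_{w,t}$ is $\X_t^-$ measurable, and in particular $\Xm$ measurable.

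Next I would express the set on which $(Y_{w,t})$ converges to $p$ through the usual $\varepsilon$--$N$ description of convergence,
\begin{align*}
\Big\{ \past : \lim_{t \to \infty} Y_{w,t}(\past) = p \Big\}
  = \bigcap_{k=1}^{\infty} \bigcup_{N=1}^{\infty} \bigcap_{t=N}^{\infty}
    \Big\{ \past : |Y_{w,t}(\past) - p| < \tfrac{1}{k} \Big\} .
\end{align*}
Each innermost set is $\Xm$ measurable because $Y_{w,t}$ is, and $\Xm$ is closed under the countable intersections and unions displayed above, so the entire convergence set is $\Xm$ measurable.

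Finally I would match $Y_{w,t}$ to the defining condition of $\AA_{w,p}$. On the set $\TT^c$ of nontrivial pasts one has $\P(\past^t) > 0$ for every $t$, so there $Y_{w,t}(\past) = \P(w|\past^t)$ and the convergence $\lim_t Y_{w,t} = p$ is exactly the requirement $\lim_t \P(w|\past^t) = p$. Since $\TT$, and hence $\TT^c$, is $\Xm$ measurable by Claim \ref{cl:TrivialPasts}, the identity
\begin{align*}
\AA_{w,p} = \TT^c \cap \Big\{ \past : \lim_{t \to \infty} Y_{w,t}(\past) = p \Big\}
\end{align*}
exhibits $\AA_{w,p}$ as an intersection of two $\Xm$ measurable sets, which is what we want. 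The only delicate point---and the closest thing to an obstacle---is the bookkeeping for trivial pasts: such a past has $Y_{w,t} \equiv 0$ for all large $t$ and would therefore be swept into the convergence set when $p = 0$. Intersecting with $\TT^c$ discards exactly these pasts and guarantees that the displayed identity holds for every value of $p \in [0,1]$.
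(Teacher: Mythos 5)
Your proof is correct and follows essentially the same route as the paper's: both express the convergence condition through the standard $\varepsilon$--$N$ description as countable intersections and unions of sets that are determined by finitely many coordinates, hence $\X_t^-$ measurable. The only difference is organizational---you factor the nontriviality requirement out as an intersection with $\TT^c$ (correctly noting the subtlety at $p=0$), whereas the paper embeds the condition $\P(\past^t)>0$ directly into each intermediate set $\AA_{w,p,\epsilon,t}^{\pm}$.
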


\begin{proof}
We proceed in steps through a series of intermediate sets.
\begin{itemize}
\item Let $\AA_{w,p,\epsilon,t}^+ \equiv \{\past: \P(\past^t) > 0,~ \P(w|\past^t) \leq p + \epsilon \}$ 
	and $\AA_{w,p,\epsilon,t}^- \equiv \{\past: \P(\past^t) > 0,~ \P(w|\past^t) \geq p - \epsilon \}$.\\
	$\AA_{w,p,\epsilon,t}^+$ and $\AA_{w,p,\epsilon,t}^-$ are both $\Xm$ measurable, since they are both $\Xm_t$ measurable. 
\item Let $\AA_{w,p,\epsilon}^+ \equiv \bigcup_{n=1}^{\infty} \bigcap_{t=n}^{\infty} \AA_{w,p,\epsilon,t}^+ 
		= \{\past: \P(\past^t) > 0, \forall t \mbox{ and } \exists n \in \N \mbox{ such that } \P(w|\past^t) \leq p + \epsilon, \mbox{ for } t \geq n \}$,
	and $\AA_{w,p,\epsilon}^- \equiv \bigcup_{n=1}^{\infty} \bigcap_{t=n}^{\infty} \AA_{w,p,\epsilon,t}^- 
		= \{\past: \P(\past^t) > 0, \forall t \mbox{ and } \exists n \in \N \mbox{ such that } \P(w|\past^t) \geq p - \epsilon, \mbox{ for } t \geq n \}$.
	Then $\AA_{w,p,\epsilon}^+$ and $\AA_{w,p,\epsilon}^-$ are each $\Xm$ measurable since they are countable unions of countable 
	intersections of $\Xm$ measurable sets. 
\item Let $\AA_{w,p,\epsilon} \equiv \AA_{w,p,\epsilon}^+ \cap \AA_{w,p,\epsilon}^- 
	= \left\{\past: \P(\past^t) > 0, \forall t \mbox{ and } \exists n \in \N \mbox{ such that } \left| \P(w|\past^t) - p \right| \leq \epsilon, \mbox{ for } t \geq n \right\}$.
	$\AA_{w,p,\epsilon}$ is $\Xm$ measurable since it is the intersection of two $\Xm$ measurable sets. 
\item Finally, note that $\AA_{w,p} = \bigcap_{m=1}^{\infty} \AA_{w,p,\epsilon_m}$ where $\epsilon_m = 1/m$. 
	And, hence, $\AA_{w,p}$ is $\Xm$ measurable as it is a countable intersection of $\Xm$ measurable sets.  
\end{itemize}
\end{proof}

\begin{Cla}
\label{cl:MeasurabilityOfEquivalenceClasses} 
Any equivalence class $E_{\beta} \in \E$ is an $\Xm$ measurable set.
\end{Cla}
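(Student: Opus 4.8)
The plan is to realize the single equivalence class $E_{\beta}$ as a countable intersection of the sets $\AA_{w,p}$ whose measurability was just established in Claim \ref{cl:MeasurabilityOfAwp}. The key observation is that an equivalence class under $\sim$ is completely specified by a single ``prediction profile'': the collection of limiting conditional probabilities $\PB(w|\cdot)$ indexed by the words $w \in \XX^*$. Two regular pasts lie in the same class precisely when these profiles agree across all $w$, and each individual condition $\PB(w|\past) = p$ is exactly what $\AA_{w,p}$ records.

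Concretely, I would first fix any representative regular past $\past_0 \in E_{\beta}$ and record its profile by setting $p_w \equiv \PB(w|\past_0)$ for each $w \in \XX^*$. I would then verify the set identity
\begin{align*}
E_{\beta} = \bigcap_{w \in \XX^*} \AA_{w,p_w} ~.
\end{align*}
For the forward inclusion, any $\past \in E_{\beta}$ is regular, hence nontrivial with $\P(\past^t) > 0$ for all $t$ and with $\lim_{t \to \infty} \P(w|\past^t)$ existing for every $w$, and it satisfies $\PB(w|\past) = \PB(w|\past_0) = p_w$ by the definition of $\sim$; thus $\past \in \AA_{w,p_w}$ for each $w$. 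For the reverse inclusion, membership in every $\AA_{w,p_w}$ forces $\P(\past^t) > 0$ for all $t$ and forces $\lim_{t \to \infty} \P(w|\past^t) = p_w$ to exist for every $w$, so $\past$ is regular with the same profile as $\past_0$, whence $\past \sim \past_0$ and $\past \in E_{\beta}$.

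Finally, since $\XX$ is finite the word set $\XX^*$ is countable, so the displayed intersection is a countable intersection of $\Xm$ measurable sets and is therefore itself $\Xm$ measurable. I do not anticipate a genuine obstacle here: the analytic work, namely controlling the existence and value of the limit $\lim_{t \to \infty} \P(w|\past^t)$ through nested approximating sets, has already been absorbed into Claim \ref{cl:MeasurabilityOfAwp}. The only care required is in the set identity above, and in particular in confirming that the nontriviality and limit-existence conditions packaged inside each $\AA_{w,p_w}$ recombine to give exactly ``$\past$ is regular and $\sim \past_0$,'' so that no spurious trivial or non-$w$-regular pasts are admitted into the intersection.
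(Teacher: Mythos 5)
Your proposal is correct and follows exactly the paper's argument: fix the prediction profile $p_w = \PB(w|E_\beta)$, write $E_\beta = \bigcap_{w \in \XX^*} \AA_{w,p_w}$, and conclude by countability of $\XX^*$ and Claim~\ref{cl:MeasurabilityOfAwp}. The only difference is that you spell out the two inclusions of the set identity, which the paper treats as immediate from the definitions.
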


\begin{proof}
Fix any equivalence class $E_{\beta} \in \E$ and, for $w \in \XX^*$, let
$p_w = \PB(w|E_{\beta})$. 
By definition $E_{\beta} = \bigcap_{w \in \XX^*} \AA_{w,p_w}$ and, by Claim 
\ref{cl:MeasurabilityOfAwp}, each $\AA_{w,p_w}$ is $\Xm$ is measurable. 
Thus, since there are only countably many finite length words $w \in \XX^*$, 
$E_{\beta}$ must also be $\Xm$ measurable. 
\end{proof}

\section{Probabilistic Consistency of Equivalence Class Transitions}
\label{appsec:Probabilistic_Consistency_of_Equivalence_Class_Transitions}

We establish that the probability of word generation from each equivalence
class is consistent in the sense of Claims
\ref{cl:ConistencyOfChainedTransitions} and \ref{cl:ProbabilisticConsistency}.
Claim \ref{cl:ProbabilisticConsistency} is used in the proof of Claim
\ref{cl:EquivalenceClassStationarity} in Appendix
\ref{appsec:Finitely_Characterized_Processes}, and Claim
\ref{cl:ConistencyOfChainedTransitions} is used in the proof of Theorem
\ref{thm:HistoryMachinesAreGenerators}. Throughout this section we assume
$\Process = (\XXZ, \X, \P)$ is a stationary, ergodic process over a finite
alphabet $\XX$ and denote the corresponding probability space over past
sequences as $(\XXm, \Xm, \Pm)$, with other notation is as in Section
\ref{sec:Definitions}. 
We define also the \emph{history $\sigma$-algebra} $\H$ for a process
$\Process = (\XXZ, \X, \P)$ as the $\sigma$-algebra generated
by cylinder sets of all finite length histories. That is, 
\begin{align*}
\H = \sigma \left( \bigcup_{t=1}^{\infty} \H_t \right) \mbox{ ~where~ }
\H_t = \sigma \left( \{ A_{w,-|w|} : |w| = t \} \right),
\end{align*}
with $A_{w,t} = \{\biinfinity: \ms_{t} \ldots \ms_{t+|w|-1} = w\}$ as in
Section \ref{sec:Definitions}. 
$\H$ is the projection onto $\XX^{\Z}$ of the $\sigma$-algebra $\X^-$ on the space $\XX^-$.

\begin{Cla}
\label{cl:ExtensionConsistencyBound}
For any $E_{\beta} \in \E$ and $w,v \in \XX^*$, $ \PB(wv|E_{\beta}) \leq \PB(w|E_{\beta}) $.
\end{Cla}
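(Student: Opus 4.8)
The plan is to reduce the statement to the finite-length monotonicity of conditional word probabilities and then pass to the limit. First I would fix a representative regular past $\past \in E_{\beta}$, so that by the definition of the equivalence-class probabilities $\PB(w|E_{\beta}) = \PB(w|\past)$ and $\PB(wv|E_{\beta}) = \PB(wv|\past)$. Because $\past$ is regular we have $\P(\past^t) > 0$ for every $t \in \N$, so the conditional probabilities $\P(w|\past^t)$ and $\P(wv|\past^t)$ are well defined at every finite length $t$.

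The key step is to apply the elementary monotonicity property of conditional word probabilities for a stationary process (Fact 4 of Section \ref{subsec:Processes}) with conditioning word $u = \past^t$. Since $w$ is a prefix of $wv$, this gives $\P(wv|\past^t) \leq \P(w|\past^t)$ for each $t \in \N$.

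Finally, I would take the limit $t \to \infty$ on both sides. Regularity of $\past$ guarantees that $\past$ is both $w$-regular and $wv$-regular, so the two limits $\lim_{t \to \infty} \P(w|\past^t) = \PB(w|\past)$ and $\lim_{t \to \infty} \P(wv|\past^t) = \PB(wv|\past)$ both exist; a weak inequality between convergent sequences is preserved in the limit, yielding $\PB(wv|E_{\beta}) = \PB(wv|\past) \leq \PB(w|\past) = \PB(w|E_{\beta})$.

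There is no serious obstacle here: the entire content is the finite-length bound supplied by Fact 4, and the only point requiring care is the appeal to regularity, which is precisely what ensures both that the finite-length conditionals are defined (via $\P(\past^t) > 0$) and that the limits defining $\PB(w|\past)$ and $\PB(wv|\past)$ exist, so that the inequality survives the limiting process.
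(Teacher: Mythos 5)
Your proposal is correct and is essentially identical to the paper's own proof: both fix a representative regular past $\past \in E_{\beta}$, invoke the finite-length monotonicity $\P(wv|\past^t) \leq \P(w|\past^t)$, and pass to the limit $t \to \infty$, which exists by regularity. Your version simply spells out the appeal to Fact 4 and the well-definedness of the finite-length conditionals a bit more explicitly than the paper does.
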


\begin{proof}
Fix $\past \in E_{\beta}$. Since $\P(wv|\past^t) \leq \P(w|\past^t)$ for each $t$: 
\begin{align*}
\PB(wv|E_{\beta}) = \PB(wv|\past) = \lim_{t \to \infty} \P(wv|\past^t) \leq \lim_{t \to \infty} \P(w|\past^t) = \PB(w|\past) = \PB(w|E_{\beta}).
\end{align*} 
\end{proof}

\begin{Cla}
\label{cl:ConsistencyOfOneStepTransitions}
Let $E_{\beta} \in \E$, $\ms \in \XX$ with $\PB(\ms|E_{\beta}) > 0$, and let
$E_{\alpha} = \delta_h(E_{\beta},\ms)$. Then,
$\PB(\ms w|E_{\beta}) = \PB(\ms |E_{\beta}) \cdot \PB(w|E_{\alpha})$
for any word $w \in \XX^*$.
\end{Cla}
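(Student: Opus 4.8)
The plan is to reduce the claim entirely to the single-representative ratio formula already established in Appendix~\ref{appsec:Well_Definedness_of_Equivalence_Class_Transitions}, so that no new analytic work is needed. First I would fix any representative regular past $\past \in E_{\beta}$, noting that $\PB(\ms|\past) = \PB(\ms|E_{\beta}) > 0$ by hypothesis. The proof of Claim~\ref{cl:AddSymbol_ToRegularPast_IsRegular} shows that the extended past $\past\ms$ is then again regular and that its predictions are given by the explicit conditional ratio
\begin{align*}
\PB(w|\past\ms) = \frac{\PB(\ms w|\past)}{\PB(\ms|\past)}, \quad \mbox{for all } w \in \XX^* .
\end{align*}
This identity (also used within the proof of Claim~\ref{cl:UnifilarityOfEquivalenceClassTransitions}) is the only nontrivial ingredient.

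Next I would bring in the definition of the transition function. Since $E_{\alpha} = \delta_h(E_{\beta},\ms)$, we have $\past\ms \in E_{\alpha}$, and therefore $\PB(w|E_{\alpha}) = \PB(w|\past\ms)$ for every $w$. Substituting the ratio formula and replacing each $\PB(\cdot|\past)$ by $\PB(\cdot|E_{\beta})$---valid because $\past \in E_{\beta}$ and all these quantities are representative-independent by construction of the equivalence classes---gives $\PB(w|E_{\alpha}) = \PB(\ms w|E_{\beta})/\PB(\ms|E_{\beta})$. Multiplying through by the finite, nonzero quantity $\PB(\ms|E_{\beta})$ then yields exactly $\PB(\ms w|E_{\beta}) = \PB(\ms|E_{\beta}) \cdot \PB(w|E_{\alpha})$, which is the desired statement.

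I do not expect a genuine obstacle here: the whole argument is essentially a one-line rearrangement of a prior result. The only points needing attention are bookkeeping. I would confirm that every conditional probability appearing is well defined, which holds because regular pasts satisfy $\P(\past^t) > 0$ for all $t$ and the hypothesis $\PB(\ms|E_{\beta}) > 0$ guarantees we never divide by zero. I would also confirm that $\PB(w|E_{\alpha})$ is genuinely independent of the chosen representative $\past\ms \in E_{\alpha}$; this is immediate from the definition of $\E$, but it is worth stating explicitly so that the passage from the representative $\past$ to the class $E_{\beta}$ (and from $\past\ms$ to $E_{\alpha}$) is airtight.
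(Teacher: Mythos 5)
Your proposal is correct and follows essentially the same route as the paper: the paper's proof also fixes a representative $\past \in E_{\beta}$, notes $\past\ms \in E_{\alpha}$, and obtains $\PB(\ms w|\past) = \PB(\ms|\past)\cdot\PB(w|\past\ms)$ by splitting the limit of $\P(\ms w|\past^t) = \P(\ms|\past^t)\cdot\P(w|\past^t\ms)$, which is exactly the ratio identity you cite from the proof of Claim~\ref{cl:AddSymbol_ToRegularPast_IsRegular}. The only cosmetic difference is that you invoke that identity by reference while the paper rewrites the short limit computation inline.
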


\begin{proof}
Fix $\past \in E_{\beta}$. Then $\past \ms \in E_{\alpha}$ is regular, so
$\P(\past^t \ms ) > 0$ for all $t$ and we have:
\begin{align*}
\PB(\ms w|E_{\beta}) 	& = \PB(\ms w|\past) \\ 
				& = \lim_{t \to \infty} \P(\ms w|\past^t) \\
				& = \lim_{t \to \infty} \P(\ms |\past^t) \cdot \P(w|\past^t \ms) \\
				& = \lim_{t \to \infty} \P(\ms |\past^t) \cdot
				\lim_{t \to \infty} \P(w|\past^t \ms ) \\
				& = \PB(\ms |\past) \cdot \PB(w|\past \ms ) \\
				& = \PB(\ms |E_{\beta}) \cdot \PB(w|E_{\alpha}) ~.
\end{align*}
\end{proof}

\begin{Cla}
\label{cl:ConistencyOfChainedTransitions}
Let $w = w_0 \ldots w_{l-1} \in \XX^*$ be a word of length $l \geq 1$,
and let $w^m = w_0 \ldots w_{m-1}$ for $0 \leq m \leq l$. 
Assume that $\PB(w^{l-1}|E_{\beta}) > 0$ for some $E_{\beta} \in
\E$. Then the equivalence classes $s_t$, $0 \leq t \leq l-1$,
defined inductively by the relations $s_0 = E_{\beta}$ and
$s_t = \delta_h(s_{t-1}, w_{t-1})$ for $1 \leq t \leq l - 1$,
are well defined. That is, $\PB(w_{t-1}|s_{t-1}) > 0$ for each
$1 \leq t \leq l-1$. Further, the probability $\PB(w|E_{\beta})$ may be
expressed as: 
\begin{align*}
\PB(w|E_{\beta}) = \prod_{t = 0}^{l-1} \PB(w_t|s_t).
\end{align*}
\noindent
In the above, $w^0 = \lambda$ is the null word and, for any equivalence class $E_{\beta}$, $\PB(\lambda|E_{\beta}) \equiv 1$. 
\end{Cla}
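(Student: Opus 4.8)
The plan is to prove the claim by induction on the word length $l$, peeling off the first symbol at each step and invoking the one-step consistency relation of Claim \ref{cl:ConsistencyOfOneStepTransitions} together with the monotonicity bound of Claim \ref{cl:ExtensionConsistencyBound}. For the base case $l = 1$ there are no transitions to define, so the well-definedness assertion is vacuous, and the product formula reads $\PB(w_0|s_0) = \PB(w|E_{\beta})$ with $s_0 = E_{\beta}$; the hypothesis $\PB(w^0|E_{\beta}) = \PB(\lambda|E_{\beta}) = 1 > 0$ holds automatically by the stated convention for the null word.

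For the inductive step I would assume the claim for all words of length $l-1$ and all starting classes, and take $w = w_0 \ldots w_{l-1}$ with $\PB(w^{l-1}|E_{\beta}) > 0$. First I would establish that the initial transition exists: since $w_0$ is a prefix of $w^{l-1}$, monotonicity (Claim \ref{cl:ExtensionConsistencyBound}, or directly when $l=2$ where $w^{l-1} = w_0$) gives $\PB(w_0|E_{\beta}) \geq \PB(w^{l-1}|E_{\beta}) > 0$, so $s_1 = \delta_h(s_0, w_0)$ is defined and $\PB(w_0|s_0) > 0$. Writing $E_{\alpha} = s_1$ and letting $w' = w_1 \ldots w_{l-1}$ be the length-$(l-1)$ suffix, the crucial step is to verify that the inductive hypothesis legitimately applies to $w'$ from $E_{\alpha}$, i.e. that $\PB((w')^{l-2}|E_{\alpha}) > 0$. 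Applying Claim \ref{cl:ConsistencyOfOneStepTransitions} with symbol $w_0$ and word $(w')^{l-2} = w_1 \ldots w_{l-2}$ yields $\PB(w^{l-1}|E_{\beta}) = \PB(w_0|E_{\beta}) \cdot \PB((w')^{l-2}|E_{\alpha})$; as the left side is positive and $\PB(w_0|E_{\beta}) > 0$, the factor $\PB((w')^{l-2}|E_{\alpha})$ must be positive.

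The induction hypothesis then furnishes well-defined classes $s'_0 = E_{\alpha}, s'_1, \ldots, s'_{l-2}$ with $\PB(w'_{t-1}|s'_{t-1}) > 0$ and the product formula $\PB(w'|E_{\alpha}) = \prod_{t=0}^{l-2} \PB(w'_t|s'_t)$. Identifying $s'_t = s_{t+1}$, which follows from $s'_0 = s_1$ and the matching recursions $s'_t = \delta_h(s'_{t-1}, w_t) = \delta_h(s_t, w_t) = s_{t+1}$, transfers the well-definedness to $s_1, \ldots, s_{l-1}$ and, together with $\PB(w_0|s_0) > 0$ already shown, completes the first assertion. To recover the full product formula I would apply Claim \ref{cl:ConsistencyOfOneStepTransitions} once more, now with symbol $w_0$ and the entire suffix $w'$, giving $\PB(w|E_{\beta}) = \PB(w_0|E_{\beta}) \cdot \PB(w'|E_{\alpha})$; substituting the inductive product and re-indexing $\prod_{t=0}^{l-2} \PB(w'_t|s'_t) = \prod_{t=1}^{l-1} \PB(w_t|s_t)$ then yields $\PB(w|E_{\beta}) = \prod_{t=0}^{l-1} \PB(w_t|s_t)$.

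The only real subtlety, and the step most deserving of care, is the index bookkeeping in the reduction from $w$ to its suffix $w'$: one must confirm that the positivity hypothesis genuinely propagates to $E_{\alpha}$ so that the induction hypothesis is applicable, and that the shifted chain $s'_t = s_{t+1}$ matches the original recursion. Everything else reduces mechanically to the two previously established consistency claims, so I expect no analytic difficulty beyond this bookkeeping.
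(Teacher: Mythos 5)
Your proof is correct and is essentially the paper's own argument spelled out in full: the paper proves this claim by exactly the same induction on $|w|$, using Claim~\ref{cl:ExtensionConsistencyBound} to get $\PB(w_0|E_{\beta})>0$ and Claim~\ref{cl:ConsistencyOfOneStepTransitions} to peel off the first symbol. The only cosmetic point is that when $l=2$ your invocation of Claim~\ref{cl:ConsistencyOfOneStepTransitions} with the suffix $(w')^{l-2}=\lambda$ falls outside that claim's literal scope (the paper excludes $\lambda$ from $\XX^*$), but the needed positivity $\PB(\lambda|E_{\alpha})=1>0$ holds by convention, so nothing is lost.
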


\begin{proof}
For $|w| = 1$ the statement is immediate and, for $|w| = 2$, it reduces to
Claim \ref{cl:ConsistencyOfOneStepTransitions}. For $|w| \geq 3$, it can
proved by induction on the length of $w$ using Claim
\ref{cl:ConsistencyOfOneStepTransitions} and the consistency 
bound provided by Claim \ref{cl:ExtensionConsistencyBound} which 
guarantees that $\PB(w_0|E_{\beta}) > 0$ if $\PB(w^{l-1}|E_{\beta}) > 0$. 
\end{proof} 

\begin{Rem}
If $\PB(w|E_{\beta}) > 0$, then by Claim \ref{cl:ExtensionConsistencyBound} we know 
$\PB(w^{l-1}|E_{\beta}) > 0$, so the formula above holds for any word $w$ with $\PB(w|E_{\beta}) > 0$. 
Moreover, in this case, $\PB(w_{l-1}|s_{l-1})$ must be nonzero in order to ensure $\PB(w|E_{\beta})$ is 
nonzero. Thus, the equivalence class $s_l = \delta_h(s_{l-1},w_{l-1})$ is also well defined. 
\end{Rem} 

The following theorem from \cite[Chapter 4, Theorem 5.7]{Durr96a} is needed in
the proof of Claim \ref{cl:VersionOfConditionalExpectation}. It is an
application of the Martingale Convergence Theorem. 

\begin{The}
\label{thm:DurrettMartingale}
Let $(\Omega, \FF, \P)$ be a probability space, and let
$\FF_1 \subseteq \FF_2 \subseteq \FF_3 \ldots$ be an increasing sequence of
$\sigma$-algebras on $\Omega$ with
$\FF_{\infty} = \sigma(\bigcup_{n=1}^{\infty} \FF_n) \subseteq \FF$.
Suppose $X: \Omega \rightarrow \R$ is an $\FF$-measurable random variable
(with $\Ex|X| < \infty$). Then, for (any versions of) the conditional
expectations $\Ex(X|\FF_n)$ and $\Ex(X|\FF_{\infty})$, we have:
\begin{align*}
\Ex(X|\FF_n) \rightarrow \Ex(X|\FF_{\infty}) \mbox{ a.s. and in $L^1$. }
\end{align*}
\end{The}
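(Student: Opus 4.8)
The plan is to recognize Theorem~\ref{thm:DurrettMartingale} as L\'evy's upward convergence theorem and to establish it in three stages: a martingale convergence step, a uniform integrability step that upgrades the mode of convergence, and a final identification of the limit. First I would set $X_n \equiv \Ex(X|\FF_n)$ and check that $(X_n)$ is a martingale with respect to $(\FF_n)$, which is immediate from the tower property: $\Ex(X_{n+1}|\FF_n) = \Ex(\Ex(X|\FF_{n+1})|\FF_n) = \Ex(X|\FF_n) = X_n$. The conditional Jensen inequality gives $\abs{X_n} \le \Ex(\abs{X}\,|\,\FF_n)$, hence $\Ex\abs{X_n} \le \Ex\abs{X} < \infty$ for every $n$, so $(X_n)$ is $L^1$-bounded and the martingale convergence theorem produces an integrable, $\FF_\infty$-measurable random variable $X_\infty$ with $X_n \to X_\infty$ almost surely.

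Next I would show $(X_n)$ is uniformly integrable, which converts the almost-sure convergence into $L^1$ convergence. The crucial observation is that $\{\abs{X_n} > K\} \in \FF_n$, so by the defining property of conditional expectation $\Ex(\abs{X_n}\,\one_{\abs{X_n}>K}) \le \Ex(\abs{X}\,\one_{\abs{X_n}>K})$. Since $\P(\abs{X_n}>K) \le \Ex\abs{X}/K$ uniformly in $n$, and since the integral of $\abs{X}$ is absolutely continuous with respect to $\P$, the right-hand side tends to $0$ as $K \to \infty$ uniformly in $n$. Uniform integrability together with the almost-sure limit then yields $X_n \to X_\infty$ in $L^1$.

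Finally I would identify $X_\infty$ with $\Ex(X|\FF_\infty)$. As $X_\infty$ is already $\FF_\infty$-measurable, it remains to verify $\Ex(X_\infty \one_A) = \Ex(X \one_A)$ for all $A \in \FF_\infty$. For $A \in \FF_m$ and any $n \ge m$ the definition of $X_n$ gives $\Ex(X_n \one_A) = \Ex(X \one_A)$; letting $n \to \infty$ and invoking the $L^1$ convergence just established yields the identity for every $A$ in the algebra $\bigcup_{m} \FF_m$. This collection is a $\pi$-system generating $\FF_\infty$, so a monotone-class (Dynkin) argument extends the identity to all of $\FF_\infty$, and the characterization of conditional expectation gives $X_\infty = \Ex(X|\FF_\infty)$ almost surely.

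The step I expect to be the main obstacle is the uniform integrability argument: an arbitrary $L^1$-bounded martingale need not converge in $L^1$, so the a.s. limit from the convergence theorem is not automatically the right object in the $L^1$ sense. It is exactly the special form $X_n = \Ex(X|\FF_n)$ for one fixed integrable $X$ that forces uniform integrability, and getting the two estimates---the uniform Markov bound on $\P(\abs{X_n}>K)$ and the absolute-continuity control of $\Ex(\abs{X}\one_A)$---to combine cleanly is the technical heart of the proof. Once $L^1$ convergence is in hand, the identification of the limit is routine, resting only on passing to the limit in the defining relation over a generating $\pi$-system.
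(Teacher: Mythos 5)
Your proof is correct, but note that the paper does not prove this statement at all: it is quoted verbatim from Durrett (Chapter 4, Theorem 5.7) with only the remark that it is an application of the martingale convergence theorem. Your argument is the standard textbook proof of L\'evy's upward convergence theorem, and it is sound in all three stages --- the martingale/$L^1$-boundedness step, the uniform integrability estimate via $\Ex(\abs{X_n}\one_{\abs{X_n}>K}) \le \Ex(\abs{X}\one_{\abs{X_n}>K})$ combined with the uniform Markov bound, and the identification of the limit over the generating $\pi$-system $\bigcup_m \FF_m$ --- so there is nothing to correct.
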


\begin{Cla}
\label{cl:VersionOfConditionalExpectation}
For any $w \in \XX^*$, $\PB_w(\biinfinity)$ is (a version of) the conditional expectation $\Ex\left(\indicator_{A_{w,0}}|\H\right)(\biinfinity)$,
where $\PB_w: \XXZ \rightarrow [0,1]$ is defined by: 
\begin{align*}
\PB_w(\biinfinity)
  & = \left\{ \begin{array}{ll}
  	\PB(w|\past) & \mbox{if } \past \mbox{ is regular,
		where $\biinfinity = \past \future$, } \\
  	0 & \mbox{otherwise.}
  \end{array} \right.
\end{align*}
\end{Cla}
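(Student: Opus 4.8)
The plan is to realize $\PB_w$ as the almost-sure limit of the elementary conditional expectations $\Ex(\indicator_{A_{w,0}}|\H_t)$ along the increasing filtration $(\H_t)$, and then to invoke Theorem \ref{thm:DurrettMartingale} to identify this limit with $\Ex(\indicator_{A_{w,0}}|\H)$. First I would compute the finite-level conditional expectations explicitly. Since $\H_t$ is generated by the finite partition of $\XXZ$ into the past cylinders $A_{u,-t} = \{\ms_{-t} \ldots \ms_{-1} = u\}$ with $|u| = t$, the conditional expectation $\Ex(\indicator_{A_{w,0}}|\H_t)$ is constant on each positive-probability atom and equals there $\P(A_{w,0} \cap A_{u,-t})/\P(A_{u,-t}) = \P(uw)/\P(u) = \P(w|u)$. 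Thus, for $\P$ a.e. $\biinfinity = \past \future$, we have $\Ex(\indicator_{A_{w,0}}|\H_t)(\biinfinity) = \P(w|\past^t)$, the finite-length conditional word probability appearing in the definition of $\PB(w|\past)$. The atoms of zero probability form a null set by Claim \ref{cl:TrivialPasts} and may be discarded, so this identity holds on a set of full measure.

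Next I would pass to the limit $t \to \infty$ along two routes and compare. On one hand, applying Theorem \ref{thm:DurrettMartingale} with $X = \indicator_{A_{w,0}}$, $\FF_t = \H_t$ (an increasing sequence, since specifying $\ms_{-(t+1)} \ldots \ms_{-1}$ refines specifying $\ms_{-t} \ldots \ms_{-1}$), and $\FF_\infty = \H$, gives $\Ex(\indicator_{A_{w,0}}|\H_t) \to \Ex(\indicator_{A_{w,0}}|\H)$ almost surely. On the other hand, by Claim \ref{cl:RegularPastsProb1} almost every past is regular, so $\P(w|\past^t) \to \PB(w|\past) = \PB_w(\biinfinity)$ for $\P$ a.e. $\biinfinity$. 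Since both are limits of the same sequence on a common set of full measure, we conclude $\PB_w = \Ex(\indicator_{A_{w,0}}|\H)$ almost surely.

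Finally, to upgrade this almost-sure identity to the assertion that $\PB_w$ \emph{is} a version of the conditional expectation, I would verify that $\PB_w$ is itself $\H$-measurable; an $\H$-measurable function agreeing almost surely with a conditional expectation is automatically a valid version. This is where the measurability machinery of Appendix \ref{appsec:Measurability_of_Equivalence_Classes} enters: each level set $\{\PB_w = p\}$ is, modulo the null set of trivial pasts, the lift to $\XXZ$ of the set $\AA_{w,p}$, which is measurable by Claim \ref{cl:MeasurabilityOfAwp}, so $\PB_w$ is $\H$-measurable. Combining this with the almost-sure identity completes the proof.

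The main obstacle I anticipate is the bookkeeping at the first step: correctly identifying $\Ex(\indicator_{A_{w,0}}|\H_t)$ with $\P(w|\past^t)$ while carefully excising the measure-zero set of trivial pasts, so that the elementary conditional expectation is genuinely well defined on a set of full measure and the subsequent limit comparison is legitimate. Once that identification is clean, the convergence step follows immediately from the cited martingale theorem together with Claim \ref{cl:RegularPastsProb1}, and the measurability step is routine given the appendix.
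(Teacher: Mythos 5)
Your proposal is correct and follows essentially the same route as the paper: identify $\Ex(\indicator_{A_{w,0}}|\H_t)$ with $\P(w|\past^t)$, apply Theorem \ref{thm:DurrettMartingale} to converge to $\Ex(\indicator_{A_{w,0}}|\H)$, use Claim \ref{cl:RegularPastsProb1} to identify the same a.s.\ limit with $\PB_w$, and then check $\H$-measurability. The only divergence is cosmetic: the paper gets measurability of $\PB_w$ by noting it is the pointwise limit of the $\H$-measurable functions $\PB_{w,t}$ on the measurable set of regular pasts (and $0$ elsewhere), which is slightly cleaner than your level-set phrasing, since measurable level sets $\{\PB_w = p\}$ alone do not in general imply measurability of a function.
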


\begin{proof}
Fix $w \in \XX^*$, and let $\Ex_w$ be any fixed version of the conditional expectation $\Ex\left(\indicator_{A_{w,0}}|\H\right)$. 
Since the function $\PB_{w,t}: \XXZ \rightarrow [0,1]$ defined by:
\begin{align*}
\PB_{w,t}(\biinfinity)
  & = \left\{ \begin{array}{ll}
  	\P(w|\past^t) & \mbox{if } \P(\past^t) > 0, \\
  	0 & \mbox{otherwise}
  \end{array} \right.
\end{align*}
is a version of the conditional expectation $\Ex(\indicator_{A_{w,0}}|\H_t)$,
Theorem \ref{thm:DurrettMartingale} implies that 
$\PB_{w,t}(\biinfinity) \rightarrow \Ex_w(\biinfinity)$ for $\P$
a.e. $\biinfinity$. Now, define:
\begin{align*}
V_w & = \{\biinfinity: \PB_{w,t}(\biinfinity) \rightarrow \Ex_w(\biinfinity) \}, \\
W_w & = \{\biinfinity \in V_w: \past \mbox{ is regular} \}~.
\end{align*}
By the above $\P(V_w) = 1$ and, by Claim \ref{cl:RegularPastsProb1}, the 
regular pasts have probability 1. Hence, $\P(W_w) = 1$. 

However, for each $\biinfinity \in W_w$ we have:
\begin{align*}
\PB_w(\biinfinity) = \PB(w|\past) = \Ex_w(\biinfinity) ~. 
\end{align*}
Thus, $\PB_w(\biinfinity) = \Ex_w(\biinfinity)$ for $\P$ a.e. $\biinfinity$.
So, for any $\H$-measurable set $H$, $\int_H \PB_w ~ d\P= \int_H \Ex_w ~ d\P$.
Furthermore, $\PB_w$ is $\H$-measurable since 
$\PB_{w,t} \stackrel{a.s.}{\longrightarrow} \PB_w$ and each $\PB_{w,t}$ is
$\H$-measurable. It follows that $\PB_w(\biinfinity)$ is a version of the
conditional expectation $\Ex\left(\indicator_{A_{w,0}}|\H\right)$. 
\end{proof}

\begin{Cla}
\label{cl:ProbabilisticConsistency} 
For any equivalence class $E_{\beta} \in \E$ and word $w \in \XX^*$, the set 
$E_{\beta,w} \equiv \{\biinfinity: \past \in E_{\beta}, \future^{|w|} = w\}$ is
$\X$-measurable with $\P(E_{\beta,w}) = \P(E_{\beta}) \cdot \PB(w|E_{\beta})$. 
\end{Cla}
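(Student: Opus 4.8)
The plan is to reduce everything to the identification of $\PB_w$ with a version of the conditional expectation $\Ex(\indicator_{A_{w,0}}|\H)$ established in Claim~\ref{cl:VersionOfConditionalExpectation}, where $A_{w,0} = \{\biinfinity : \ms_0 \ldots \ms_{|w|-1} = w\}$ is precisely the event $\{\future^{|w|} = w\}$. Once that identification is in hand, the desired factorization is a one-line computation with conditional expectations.

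First I would dispatch measurability. Writing $\widetilde{E}_\beta \equiv \{\biinfinity : \past \in E_\beta\}$, we have $E_{\beta,w} = \widetilde{E}_\beta \cap A_{w,0}$. Since the past-projection $\biinfinity \mapsto \past$ pulls $\Xm$ back to $\H$, and $E_\beta$ is $\Xm$-measurable by Claim~\ref{cl:MeasurabilityOfEquivalenceClasses}, the set $\widetilde{E}_\beta$ is $\H$-measurable and hence $\X$-measurable; $A_{w,0}$ is a cylinder set, so the intersection $E_{\beta,w}$ is $\X$-measurable.

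For the probability, I would integrate the conditional-expectation identity over the $\H$-measurable set $\widetilde{E}_\beta$. By the defining property of conditional expectation together with Claim~\ref{cl:VersionOfConditionalExpectation},
\begin{align*}
\P(E_{\beta,w})
  = \int_{\widetilde{E}_\beta} \indicator_{A_{w,0}} \, d\P
  = \int_{\widetilde{E}_\beta} \PB_w \, d\P ~.
\end{align*}
The key observation is that $\PB_w$ is in fact \emph{constant} on $\widetilde{E}_\beta$: every past in the equivalence class $E_\beta$ is regular, and $\PB(w|\past) = \PB(w|E_\beta)$ for every such past by the definition of $\PB(w|E_\beta)$, so $\PB_w(\biinfinity) = \PB(w|E_\beta)$ for all $\biinfinity \in \widetilde{E}_\beta$. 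Pulling this constant out of the integral and recalling that $\P(\widetilde{E}_\beta) = \P(E_\beta)$ yields $\P(E_{\beta,w}) = \PB(w|E_\beta) \cdot \P(E_\beta)$, as claimed.

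The substantive content has already been absorbed into Claim~\ref{cl:VersionOfConditionalExpectation} (which itself rests on the Martingale Convergence Theorem via Theorem~\ref{thm:DurrettMartingale}), so no genuine obstacle remains at this stage. The only point demanding care is that $\PB_w$ equals $\PB(w|E_\beta)$ identically, rather than merely almost surely, on $\widetilde{E}_\beta$; this is what allows the constant to be extracted without an additional null-set argument, and it hinges precisely on $E_\beta$ consisting entirely of regular pasts.
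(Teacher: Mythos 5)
Your proposal is correct and follows essentially the same route as the paper: decompose $E_{\beta,w}$ as $\widehat{E}_\beta \cap A_{w,0}$ for measurability, then integrate $\indicator_{A_{w,0}}$ over the $\H$-measurable set $\widehat{E}_\beta$, replace the indicator by the conditional expectation $\Ex(\indicator_{A_{w,0}}|\H) = \PB_w$ via Claim~\ref{cl:VersionOfConditionalExpectation}, and use that $\PB_w$ is identically equal to $\PB(w|E_\beta)$ on $\widehat{E}_\beta$. Your closing remark about pointwise (not merely a.e.) constancy on the equivalence class is exactly the point that makes the final step clean.
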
 

\begin{proof}
Let $\widehat{E}_{\beta} = \{ \biinfinity: \past \in E_{\beta} \}$. 
Then $\widehat{E}_{\beta}$ and $A_{w,0}$ are both $\X$-measurable, 
so their intersection $E_{\beta,w}$ is as well. And, we have:
\begin{align*}
\P(E_{\beta,w}) 	& = \int_{\widehat{E}_{\beta}} \indicator_{A_{w,0}}(\biinfinity) ~ d\P \\
			& \stackrel{(a)}{=} \int_{\widehat{E}_{\beta}} \Ex(\indicator_{A_{w,0}}|\H) (\biinfinity) ~ d\P \\
			& \stackrel{(b)}{=} \int_{\widehat{E}_{\beta}} \PB_w (\biinfinity) ~ d\P \\
			& = \int_{\widehat{E}_{\beta}} \PB(w|E_{\beta}) ~ d\P \\
			& = \P(E_{\beta}) \cdot \PB(w|E_{\beta}) ~,
\end{align*} 
where (a) follows from the fact that $\widehat{E}_{\beta}$ is $\H$-measurable
and (b) follows from Claim \ref{cl:VersionOfConditionalExpectation}. 
\end{proof}
	 
\section{Finitely Characterized Processes}
\label{appsec:Finitely_Characterized_Processes}

We establish several results concerning finitely characterized processes. 
In particular, we show (Claim \ref{cl:WellDefinednessOfHistoryMachine})
that the history \eM\ $M_h(\Process)$ is, in fact, a well defined HMM.
Throughout, we assume $\Process = (\XXZ, \X, \P)$ is a stationary, ergodic,
finitely characterized process over a finite alphabet $\XX$ and
denote the corresponding probability space over past sequences as
$(\XXm, \Xm, \Pm)$. The set of positive probability equivalences is denoted
$\E^+ = \{E_1, \ldots, E_N\}$ and the set of all equivalence classes
as $\E = \{E_{\beta}, \beta \in B\}$. For equivalence classes $E_{\beta}, E_{\alpha} \in \E$ and symbol $x \in \XX$,
$I(x, \alpha, \beta)$ is the indicator of the transition from class $E_{\alpha}$ to class $E_{\beta}$ on symbol $x$. 
\begin{align*}
I(x,\alpha,\beta) 
  & = \left\{ \begin{array}{ll}
  	1 & \mbox{if } \PB(x|E_{\alpha}) > 0 \mbox{ and } \delta_h(E_{\alpha},x) = E_{\beta}, \\
  	0 & \mbox{otherwise.}
  \end{array} \right.
\end{align*} 
Finally, the symbol-labeled transition matrices $T^{(x)}, x \in \XX$ between
equivalence classes $E_1, \ldots , E_N$ are defined by
$T^{(x)}_{ij} = \PB(x|E_i) \cdot I(x,i,j)$. The overall transition matrix
between these equivalence classes is denoted by $T$, $T = \sum_ {x \in \XX} T^{(x)}$. 

\begin{Cla}
\label{cl:EquivalenceClassStationarity}
For any equivalence class $E_{\beta} \in \E$:
\begin{align*}
\P(E_{\beta}) = \sum_{i=1}^N  \sum_{x \in \XX}  \P(E_i) \cdot \PB(x|E_i) \cdot I(x,i,\beta) ~. 
\end{align*}
\end{Cla}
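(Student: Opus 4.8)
The plan is to read both sides of the identity as probabilities of events on the bi-infinite sequence space $\XXZ$ and then to exploit the stationarity of $\P$ together with the one-step transition structure of the equivalence classes that has already been established. By the correspondence between $\Pm$ and $\P$, the left side $\P(E_\beta)$ is the probability of the event $\{\biinfinity : \past \in E_\beta\}$, where $\past = \ldots \ms_{-2}\ms_{-1}$ is the past preceding time $0$. The right side should turn out to be the probability that this \emph{same} class $E_\beta$ is occupied one time-step later, decomposed according to which class the earlier past belonged to and which symbol was emitted in between. So the whole proof is a ``conservation of probability'' argument for the flow of mass between classes.

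First I would introduce the event $A = \{\biinfinity : \mbox{the past } \ldots\ms_{-1}\ms_0 \mbox{ preceding time } 1 \mbox{ lies in } E_\beta\}$, which is $\X$-measurable by Claim \ref{cl:MeasurabilityOfEquivalenceClasses} (applied after a one-step shift). Since $\P$ is stationary, the probability that the past preceding time $n$ lies in $E_\beta$ does not depend on $n$, so in particular $\P(A) = \P(E_\beta)$. The key structural observation is that the past preceding time $1$ is exactly the past $\past$ preceding time $0$ with the symbol $\ms_0$ appended; hence $A$ can be partitioned according to the class of $\past$ and the value of $\ms_0$.

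Next I would carry out this partition modulo null sets. By Claim \ref{cl:RegularPastsProb1} almost every $\past$ is regular, and because $\Process$ is finitely characterized the classes $\E^+ = \{E_1,\ldots,E_N\}$ carry full measure, so for a.e. $\biinfinity$ the past $\past$ lies in a unique $E_i \in \E^+$ and $\ms_0$ equals a unique symbol $x$. When $\PB(x|E_i) > 0$, Claim \ref{cl:AddSymbol_ToRegularPast_IsRegular} guarantees that $\past x$ is again regular, and by the well-definedness of the transition (Claim \ref{cl:UnifilarityOfEquivalenceClassTransitions} with the defining Equation (\ref{eq:DeltaFunctionHistoryMachineDef})) it lies in $\delta_h(E_i,x)$; thus $\past x \in E_\beta$ precisely when $I(x,i,\beta) = 1$. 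The pairs $(i,x)$ with $\PB(x|E_i) = 0$ contribute nothing, because the event $\{\past \in E_i,\, \ms_0 = x\}$ then has probability $\P(E_i)\cdot\PB(x|E_i) = 0$ by Claim \ref{cl:ProbabilisticConsistency}. Consequently, up to a null set, $A$ is the disjoint union of the sets $E_{i,x} = \{\biinfinity : \past \in E_i,\, \ms_0 = x\}$ over those $(i,x)$ with $I(x,i,\beta) = 1$.

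Finally I would sum, applying Claim \ref{cl:ProbabilisticConsistency} in the one-symbol form $\P(E_{i,x}) = \P(E_i)\cdot\PB(x|E_i)$ to each piece:
\begin{align*}
\P(E_\beta)
  &= \P(A)
   = \sum_{(i,x):\, I(x,i,\beta)=1} \P(E_{i,x}) \\
  &= \sum_{i=1}^N \sum_{x \in \XX} \P(E_i)\cdot\PB(x|E_i)\cdot I(x,i,\beta),
\end{align*}
where the terms with $I(x,i,\beta) = 0$ vanish and may be reinserted to complete the double sum. I expect the only delicate point to be the measure-theoretic bookkeeping: making the partition of $A$ rigorous requires simultaneously discarding the nonregular pasts, the pasts outside $\E^+$, and the zero-probability symbol extensions, and one must check that each of these exceptional sets is genuinely null. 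The shift/time-index alignment---that the past before time $1$ is the past before time $0$ with $\ms_0$ appended, so that stationarity is applied to the correct translate---also needs to be stated carefully.
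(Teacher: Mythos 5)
Your proposal is correct and follows essentially the same route as the paper: the paper likewise writes $\P(E_\beta) = \P(\{\biinfinity : \past\ms_0 \in E_\beta\})$ by stationarity, partitions over the class $E_i$ of $\past$ (using $\sum_i \P(E_i) = 1$) and over the symbol $\ms_0 = x$, replaces the condition $\past\ms_0 \in E_\beta$ by the indicator $I(x,i,\beta)$ via the well-definedness of $\delta_h$, and finishes with Claim \ref{cl:ProbabilisticConsistency}. Your version merely makes the null-set bookkeeping more explicit than the paper does.
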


\begin{proof}
We have:
\begin{align*}
\P(E_{\beta}) 	& \equiv \P(\{\biinfinity: \past \in E_{\beta} \}) \\
			& \stackrel{(a)}{=}  \P(\{\biinfinity: \past\ms_0 \in E_{\beta} \}) \\
			& \stackrel{(b)}{=} \sum_{i=1}^N \P(\{\biinfinity: \past\ms_0 \in E_{\beta}, \past \in E_i \}) \\
			& = \sum_{i=1}^N \sum_{x \in \XX} \P(\{\biinfinity: \past\ms_0 \in E_{\beta}, \past \in E_i,\ms_0 = x \}) \\
			& = \sum_{i=1}^N \sum_{x \in \XX} \P(\{\biinfinity: \past \in E_i,\ms_0 = x \}) \cdot I(x,i,\beta) \\
			& = \sum_{i=1}^N \sum_{x \in \XX} \P(E_{i,x}) \cdot I(x,i,\beta) \\
			& \stackrel{(c)}{=} \sum_{i=1}^N \sum_{x \in \XX}  \P(E_i) \cdot \PB(x|E_i) \cdot I(x,i,\beta) ~,
\end{align*}
where (a) follows from stationarity, (b) from the fact that $\sum_{i=1}^N \P(E_i) = 1$,
and (c) from Claim \ref{cl:ProbabilisticConsistency}. 
\end{proof}

\begin{Cla}
\label{cl:InternalTransitions} 
For any $E_i \in \E^+$ and symbol $x$ with $\PB(x|E_i) > 0$, $\delta_h(E_i,x) \in \E^+$. 
\end{Cla}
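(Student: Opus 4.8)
The plan is to obtain the claim as an immediate consequence of the stationarity relation just established in Claim \ref{cl:EquivalenceClassStationarity}, which expresses the probability of \emph{any} equivalence class $E_\beta$, $\beta \in B$---including the zero-probability ones---as a nonnegative sum of the incoming transition flows from the positive-probability classes $E_1, \ldots, E_N$. The key observation is that this relation is valid for every $\beta$, so it may be applied to a hypothesized target class even before we know whether that class lies in $\E^+$.

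First I would argue by contradiction. Fix $E_i \in \E^+$ and a symbol $x$ with $\PB(x|E_i) > 0$, and let $E_\beta = \delta_h(E_i,x)$ be the class to which $E_i$ transitions on $x$. Suppose, toward a contradiction, that $E_\beta \notin \E^+$, so that $\P(E_\beta) = 0$. Applying Claim \ref{cl:EquivalenceClassStationarity} to $E_\beta$ gives
\begin{align*}
\P(E_\beta) = \sum_{i'=1}^N \sum_{x' \in \XX} \P(E_{i'}) \cdot \PB(x'|E_{i'}) \cdot I(x',i',\beta) ~,
\end{align*}
where every summand on the right-hand side is nonnegative.

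Next I would isolate the single term with $i' = i$ and $x' = x$. Since $\PB(x|E_i) > 0$ and $\delta_h(E_i,x) = E_\beta$ by assumption, the indicator $I(x,i,\beta) = 1$ directly from its definition. Hence this term equals $\P(E_i) \cdot \PB(x|E_i)$, which is strictly positive because $E_i \in \E^+$ forces $\P(E_i) > 0$ and $\PB(x|E_i) > 0$ by hypothesis. Discarding all remaining (nonnegative) terms then yields $\P(E_\beta) \geq \P(E_i) \cdot \PB(x|E_i) > 0$, contradicting $\P(E_\beta) = 0$. Therefore $E_\beta = \delta_h(E_i,x) \in \E^+$, as claimed.

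I do not anticipate any genuine obstacle here: the entire content is carried by Claim \ref{cl:EquivalenceClassStationarity}, and the argument is a one-line positivity estimate extracting a single strictly positive summand. The only points requiring minor care are noting that the stationarity formula holds for all $\beta \in B$ (so it applies to the hypothesized zero-probability target) and that, because $\E^+$ comprises the \emph{full} collection of positive-probability classes summing to measure one, membership in $\E^+$ is equivalent to having strictly positive probability---so establishing $\P(E_\beta) > 0$ suffices.
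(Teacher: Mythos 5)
Your proposal is correct and is essentially identical to the paper's own proof: both apply Claim \ref{cl:EquivalenceClassStationarity} to the target class and extract the single nonnegative summand $\P(E_i)\cdot\PB(x|E_i)\cdot I(x,i,\beta)$ to conclude $\P(\delta_h(E_i,x)) \geq \P(E_i)\cdot\PB(x|E_i) > 0$. The contradiction framing is superfluous but harmless; the direct positivity estimate already gives membership in $\E^+$.
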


\begin{proof}
Fix $E_i \in \E^+$ and $x \in \XX$ with $\PB(x|E_i) > 0$. By Claim \ref{cl:EquivalenceClassStationarity}, 
$\P( \delta_h(E_i,x) ) \geq \P(E_i) \cdot \PB(x|E_i) > 0$. Hence, $\delta_h(E_i,x) \in \E^+$. 
\end{proof}

\begin{Cla}
\label{cl:WellDefinednessOfHistoryMachine}
The transition matrix $T = \sum_{x \in X} T^{(x)}$ is stochastic: $\sum_{j=1}^N T_{ij} = 1$, for each $1 \leq i \leq N$. 
Hence, the HMM $M_h(\Process) = (\E^+, \XX, \{T^{(x)}\})$ is well defined. 
\end{Cla}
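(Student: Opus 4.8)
The plan is to verify the row-sum condition directly by expanding the definition of $T$ and reducing it to the normalization identity already established for the $\PB(\cdot|E_i)$. Fix a row index $i$ with $1 \le i \le N$. By definition $T_{ij} = \sum_{x \in \XX} \PB(x|E_i)\, I(x,i,j)$, so interchanging the two finite sums gives
\begin{align*}
\sum_{j=1}^N T_{ij} = \sum_{x \in \XX} \PB(x|E_i) \sum_{j=1}^N I(x,i,j) ~.
\end{align*}
The task then reduces to showing that the inner sum $\sum_{j=1}^N I(x,i,j)$ contributes a factor of exactly $1$ for every symbol $x$ that actually carries weight.

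The crucial step is to control where the transitions land. For any $x$ with $\PB(x|E_i) > 0$, the target class $\delta_h(E_i,x)$ is well defined by Claim \ref{cl:UnifilarityOfEquivalenceClassTransitions} and \eqnref{eq:DeltaFunctionHistoryMachineDef}, and Claim \ref{cl:InternalTransitions} guarantees that this class again lies in $\E^+$, say $\delta_h(E_i,x) = E_{j(x)}$ for a unique index $j(x) \in \{1,\ldots,N\}$. Consequently $I(x,i,j) = 1$ precisely when $j = j(x)$ and vanishes otherwise, so $\sum_{j=1}^N I(x,i,j) = 1$ whenever $\PB(x|E_i) > 0$. For symbols with $\PB(x|E_i) = 0$ the corresponding summand drops out of the outer sum regardless, since it carries the prefactor $\PB(x|E_i) = 0$.

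Substituting back, I obtain $\sum_{j=1}^N T_{ij} = \sum_{x \in \XX} \PB(x|E_i)$, which equals $1$ by the normalization of Claim \ref{cl:Normalization} (see also \eqnref{eq:Normalization}). Since $i$ was arbitrary, $T$ is stochastic, and therefore $M_h(\Process) = (\E^+, \XX, \{T^{(x)}\})$ satisfies all the requirements of an edge-emitting HMM in the sense of Definition \ref{def:HMM}.

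I expect the only genuine content to be the appeal to Claim \ref{cl:InternalTransitions}: it is exactly what rules out probability ``leaking'' from the positive-probability classes of $\E^+$ into the discarded measure-zero equivalence classes. Without that containment, a transition $\delta_h(E_i,x)$ could point to some $E_\beta \notin \E^+$, the inner indicator sum could be $0$ for a weighted symbol, and the row sum could come out strictly less than $1$. Everything else is routine bookkeeping of finite sums, so no additional estimates or limiting arguments are needed here.
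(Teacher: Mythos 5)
Your proof is correct and is precisely the argument the paper compresses into its one-line citation of Claims \ref{cl:Normalization} and \ref{cl:InternalTransitions}: the indicator sum collapses to $1$ for each weighted symbol because the target class stays in $\E^+$, and the row sum then reduces to the normalization identity. You have simply written out the bookkeeping the paper leaves implicit, so there is nothing to add or correct.
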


\begin{proof}
This follows directly from Claims \ref{cl:Normalization} and \ref{cl:InternalTransitions}. 
\end{proof}

\bibliography{ref,chaos}

\end{document}